\journalname{Math. Program. Ser. A}
\newcommand{\R}{{\mathbb R}}
\newcommand{\N}{{\mathbb N}}
\DeclareMathOperator{\argmin}{argmin}
\DeclareMathOperator{\prox}{prox}
\newcommand{\cC}{{\mathcal C}}
\newcommand{\cH}{{\mathcal H}}
\newcommand{\cO}{{\mathcal O}}
\newcommand{\Id}{\mathrm{I}}
\newcommand{\demi}{\frac{1}{2}}
\newcommand{\ie}{{\it i.e.}\,\,}
\newcommand{\Rb}{\R\cup\{+\infty\}}
\newcommand{\eqdef}{:=}
\newcommand{\seq}[1]{\pa{#1}_{k \in \N}}
\newcommand{\dotp}[2]{\langle #1,\,#2 \rangle}
\newcommand{\norm}[1]{\left\|{#1}\right\|}
\newcommand{\pa}[1]{\left({#1}\right)}
\newcommand{\brac}[1]{\left[{#1}\right]}
\newcommand{\IPAHD}{\mathrm{(IPAHD)}}
\newcommand{\IPAHDNS}{\text{\rm{(IPAHD-NS)}\,}}
\newcommand{\IPAHDSC}{\text{\rm{(IPAHD-SC)}\,}}
\newcommand{\IPAHDNSSC}{\text{\rm{(IPAHD-NS-SC)}\,}}
\newcommand{\IGAHD}{\text{\rm{(IGAHD)}\,}}
\newcommand{\IGAHDSC}{\text{\rm{(IGAHD-SC)}\,}}
\newcommand{\AVD}[1]{\text{\rm{(AVD)}$_{#1}$\,}}
\newcommand{\DINAVD}[1]{\text{\rm{(DIN-AVD)}$_{#1}$\,}}
\newcommand{\DIN}[1]{\text{\rm{(DIN)}$_{#1}$\,}}
\newcommand{\DINNS}[1]{\text{\rm{(DIN-NS)}$_{#1}$\,}}
\newcommand{\xkp}{x_{k+1}}
\newcommand{\tcb}[1]{{\color{black}{#1}}}
\begin{document}

\title{First-order optimization algorithms via inertial systems with Hessian driven damping}
\titlerunning{Optimization via inertial systems with Hessian damping}

\author{Hedy Attouch \and Zaki Chbani \and Jalal Fadili
\and Hassan Riahi}
\authorrunning{H. Attouch, Z. Chbani, J. Fadili, H. Riahi} 

\institute{
H. Attouch \at IMAG, Univ. Montpellier, CNRS, Montpellier, France\\
\email{hedy.attouch@umontpellier.fr} \and 
Z. Chbani 
\at Cadi Ayyad Univ., Faculty of Sciences Semlalia, Mathematics, 40000 Marrakech, Morroco \\ \email{chbaniz@uca.ac.ma}
\and J. Fadili
\at{GREYC CNRS UMR 6072, Ecole Nationale Sup\'erieure d'Ing\'enieurs de Caen, France}\\
\email{Jalal.Fadili@greyc.ensicaen.fr} 
\and H. Riahi 
\at Cadi Ayyad Univ., Faculty of Sciences Semlalia, Mathematics, 40000 Marrakech, Morroco \email{h-riahi@uca.ac.ma}
}

\date{July 19, 2019}

\maketitle

\begin{abstract}
In a Hilbert space setting, for convex optimization, we analyze the convergence rate of a class of first-order algorithms involving inertial features. They can be interpreted as discrete time versions of inertial dynamics involving both viscous and Hessian-driven dampings. The geometrical damping driven by the Hessian intervenes in the dynamics in the form $\nabla^2  f (x(t)) \dot{x} (t)$. By treating this term as the time derivative of $ \nabla f (x (t)) $, this gives, in discretized form, first-order algorithms in time and space.
In addition to the convergence properties attached to Nesterov-type accelerated gradient methods, the algorithms thus obtained are new and show a rapid convergence towards zero of the gradients. On the basis of a regularization technique using the Moreau envelope, we extend these methods to non-smooth convex functions with extended real values. The introduction of time scale factors makes it possible to further accelerate these algorithms. We also report numerical results on structured problems to support our theoretical findings. 
\end{abstract}

\keywords{Hessian driven damping; inertial optimization algorithms; Nesterov accelerated gradient method; Ravine method; time rescaling.}

\paragraph{\textbf{AMS subject classification}} 37N40, 46N10, 49M30, 65B99, 65K05, 65K10, 90B50, 90C25.


\section{Introduction}\label{sec:prel} 

Unless specified, throughout the paper we make the following assumptions
\begin{equation}\tag{$\mathrm{H}$}\label{eq:mainassum}
\begin{cases}
\cH \text{ is a real Hilbert space};\\
f: \cH \rightarrow \R  \text{ is a convex function of  class $\mathcal C^2$}, \,  S := \argmin_{\cH} f \neq \emptyset;  \\
\gamma, \,  \beta,  \, b: [t_0, +\infty[ \to \R^+ \text{ are non-negative continuous functions}, \,  t_0 >0.
\end{cases}
\end{equation}

As a guide in our study, we will rely on the asymptotic behavior, when $t \to +\infty$, of the trajectories of  the inertial system with Hessian-driven damping
$$ 
\qquad \ddot{x}(t) + \gamma(t)\dot{x}(t) +  \beta (t) \nabla^2  f (x(t)) \dot{x} (t) + b(t)\nabla  f (x(t)) = 0 ,
$$
$ \gamma(t)$ and $\beta(t)$ are damping parameters, and $b(t)$ is a time scale parameter.

The time discretization of this system will provide a rich family of first-order methods for  minimizing  $f$.
At first glance, the presence of the Hessian may seem to entail numerical difficulties. However, this is not the case as the Hessian intervenes in the above ODE in the form $\nabla^2  f (x(t)) \dot{x} (t)$, which is nothing but the derivative w.r.t. time of $\nabla f (x(t))$. This explains why the time discretization of this dynamic provides first-order algorithms.
Thus, the Nesterov extrapolation scheme \cite{Nest1,Nest4} is modified by the introduction of the difference of the gradients at consecutive iterates. This gives algorithms of the form
\[
\begin{cases}
y_k = x_{k} + \alpha_k ( x_{k}  - x_{k-1}) - \beta_k  \left( \nabla f (x_k)  - \nabla f (x_{k-1}) \right) \\
x_{k+1} = T (y_k) ,
\end{cases}
\]
where $T$, to be specified later, is an operator involving the gradient or the proximal operator of $f$.

Coming back to the continuous dynamic, we will pay particular attention to the following two cases, specifically adapted to the properties of $ f$:
\begin{enumerate}[label=$\bullet$]
\item  For a general convex function $f$, taking
$ \gamma(t)=\frac{\alpha}{t} $,  gives
$$ {\DINAVD{\alpha, \beta, b}} \quad \ddot{x}(t) + \displaystyle{\frac{\alpha}{t} }\dot{x}(t) +  \beta (t) \nabla^2  f (x(t)) \dot{x} (t) + b(t)\nabla  f (x(t)) = 0.
$$
In the case $\beta \equiv 0$, $\alpha =3$,  $b(t)\equiv 1$,  it can be interpreted as a continuous version of the Nesterov accelerated gradient method \cite{SBC}. According to this, in this case, we will obtain $\cO\pa{t^{-2}}$ convergence rates for the objective values.

\item  For a $\mu$-strongly convex function $f$, we will rely on the autonomous inertial system with Hessian driven damping
\begin{equation*}
{\DIN{2\sqrt{\mu} , \beta}} \quad  \ddot{x}(t) + 2\sqrt{\mu} \dot{x}(t)   
 + \beta \nabla^2 f (x(t))\dot{x}(t) + \nabla f (x(t)) = 0,
\end{equation*}
and show exponential (linear) convergence rate for both objective values and gradients.
\end{enumerate}
For an appropriate setting of the parameters, the time discretization of these dynamics  provides first-order algorithms  with fast convergence properties. Notably, we will show a rapid convergence towards zero of  the gradients.

\subsection{A historical perspective}
B. Polyak initiated the use of inertial dynamics to accelerate the gradient method in optimization. In \cite{Pol,Polyak2}, based on the inertial system with a fixed viscous damping coefficient $\gamma >0$
 \begin{equation*}
{\rm (HBF)} \qquad \ddot{x}(t) + \gamma \dot{x}(t)  +  \nabla f (x(t)) = 0,
\end{equation*}
he introduced  the Heavy Ball with Friction method.   For a strongly convex function $f$, (HBF) provides convergence at exponential rate of  $f(x(t))$ to $\min_{\cH} f$. For general convex functions, the asymptotic convergence rate  of (HBF) is $\cO (\frac{1}{t})$ (in the worst case). This is however not better than the steepest descent. 
A decisive step to improve (HBF) was taken  by Alvarez-Attouch-Bolte-Redont \cite{AABR} by introducing the Hessian-driven damping term $\beta \nabla^2 f (x(t))\dot{x}(t)$, that is $\DIN{0,\beta}$. The next important step was accomplished by Su-Boyd-Cand\`es \cite{SBC} with the introduction of a vanishing viscous damping coefficient $\gamma(t)= \frac{\alpha}{t}$, that is $\AVD{\alpha}$ (see Section~\ref{AVD}).
The system $\DINAVD{\alpha, \beta, 1}$ (see Section~\ref{gen_conv_cont}) has emerged as a combination of $\DIN{0,\beta}$ and $\AVD{\alpha}$. Let us review some basic facts concerning these systems.
 
\subsubsection{The $\DIN{\gamma,\beta}$ dynamic} 
The inertial system 
\begin{equation*}
\DIN{\gamma,\beta} \qquad \ddot{x}(t) + \gamma \dot{x}(t) + \beta \nabla^2 f (x(t)) \dot{x}(t)  + \nabla f (x(t)) = 0,
\end{equation*}
was  introduced in \cite{AABR}. In line with (HBF), it contains a \textit{fixed} positive friction coefficient $\gamma$. The introduction of the Hessian-driven damping makes it possible to neutralize the transversal oscillations likely to occur with (HBF), as observed in \cite{AABR} in the case of the Rosenbrook function. The need to take a geometric damping adapted to $f$ had already been observed by Alvarez \cite{Alv0} who considered
\[
\ddot{x}(t) + \Gamma \dot{x}(t) + \nabla f (x(t)) = 0 ,
\] 
where $\Gamma: \cH \to \cH$ is a linear positive anisotropic operator. But still this damping operator is fixed. For a general convex function, the Hessian-driven damping in $\DIN{\gamma,\beta}$ performs a similar operation in a closed-loop adaptive way. The terminology (DIN) stands shortly for Dynamical Inertial Newton. It  refers to the natural link between this dynamic and the continuous Newton method. 

\subsubsection{The \AVD{\alpha} dynamic}\label{AVD}
 The inertial system  
 \begin{equation*}
\AVD{\alpha} \qquad \ddot{x}(t) + \frac{\alpha}{t} \dot{x}(t)  +  \nabla f (x(t)) = 0,
\end{equation*}
 was introduced in the context of convex optimization in \cite{SBC}. For  general convex functions it provides a continuous version of the accelerated gradient method of Nesterov. For $\alpha \geq 3$, each trajectory $x(\cdot)$ of \AVD{\alpha} satisfies the asymptotic rate of convergence of the values $f(x(t)) - \inf_{\cH}f =\cO \left(1 /t^2\right)$. As a specific feature, the viscous damping coefficient $\frac{\alpha}{t}$ vanishes (tends to zero) as time $t$ goes to infinity, hence the terminology. 
The convergence properties of the dynamic \AVD{\alpha} have been the subject of many recent studies, see \cite{AAD,AC1,AC2,AC2R-EECT,ACPR,ACR-subcrit,AP,AD,AD17,May,SBC}.
They helped to explain why $\frac{\alpha}{t}$ is a wise choise of the damping coefficient. 
 
In \cite{CEG1}, the authors showed that a vanishing damping coefficient $\gamma (\cdot)$ dissipates the energy, and hence makes the dynamic interesting for optimization, as long as $\int_{t_0}^{+\infty} \gamma (t) dt= + \infty$. The damping coefficient can go to zero asymptotically but not too fast. The smallest which is admissible is of order $\frac{1}{t}$. It enforces  the inertial effect with respect to the friction effect.
 
The tuning of the parameter $\alpha$ in front of $\frac{1}{t}$ comes from the Lyapunov analysis and the optimality of the convergence rates obtained. 
The case $ \alpha = 3 $, which corresponds to Nesterov's historical algorithm, is critical.  In the case $ \alpha = 3 $,  the question of the convergence of the trajectories remains an open problem (except in one dimension where convergence holds \cite{ACR-subcrit}).
 As a remarkable property, for $\alpha >3$, it has been shown by Attouch-Chbani-Peypouquet-Redont   
\cite{ACPR} that each trajectory converges weakly to a minimizer.  The corresponding algorithmic result has been obtained by Chambolle-Dossal \cite{CD}.
For $\alpha >3$, it is shown in \cite{AP} and \cite{May} that the asymptotic convergence rate of the values is actually  $o(1/t^2)$.
The subcritical case $\alpha \leq 3$ has  been examined by Apidopoulos-Aujol-Dossal\cite{AAD}  and Attouch-Chbani-Riahi \cite{ACR-subcrit}, with the convergence rate of the objective values $\displaystyle{\cO\pa{t^{-\frac{2\alpha}{3}}}}$.
These rates are optimal, that is, they can be reached, or approached arbitrarily close:

\noindent $\bullet$   $\alpha  \geq 3 $: the optimal rate $\displaystyle{\cO\pa{t^{-2}}}$ is achieved by taking  $f (x) = \|x\|^{r}$ with $r\to +\infty$ ($f$ become very flat around its minimum), see  \cite{ACPR}. 

\noindent $\bullet$  $\alpha  < 3 $: the optimal rate $\displaystyle{\cO\pa{t^{-\frac{2\alpha}{3}}}}$ is achieved by taking  $f(x) = \|x\|$,  see \cite{AAD}. 

\noindent The inertial system with  a general damping coefficient $\gamma(\cdot)$ was recently studied by Attouch-Cabot in \cite{AC1,AC2}, and Attouch-Cabot-Chbani-Riahi in \cite{AC2R-EECT}.

\subsubsection{The $\DINAVD{\alpha, \beta}$ dynamic}\label{DIN-AVD-continu}
The  inertial system 
$$\DINAVD{\alpha, \beta} \qquad \ddot{x}(t) + \displaystyle{\frac{\alpha}{t} }\dot{x}(t) +  \beta \nabla^2  f (x(t)) \dot{x} (t) + \nabla  f (x(t)) = 0,
$$
was introduced in \cite{APR}. It combines the two  types of  damping considered above. Its  formulation looks at a first glance more complicated than \AVD{\alpha}. In \cite{APR2}, Attouch-Peypouquet-Redont showed that
\DINAVD{\alpha, \beta} is equivalent to the first-order system in time and space
$$  \left\{
\begin{array}{l}
\dot x(t) +  \beta \nabla f (x(t))- \left( \frac{1}{\beta} -  \frac{\alpha}{t} \right)  x(t) +  \frac{1}{\beta} y(t) = 0;  	 \\
\rule{0pt}{10pt}
 \dot{y}(t)    -  \left( \frac{1}{\beta} -  \frac{\alpha}{t} +  \frac{\alpha \beta}{t^2}  \right) x(t)
 +\frac{1}{\beta} y(t) =0.
\end{array}\right.
$$
This provides a natural extension to $f: \cH \to \Rb$ proper lower semicontinuous and convex, just replacing the gradient by the subdifferential.

To get better insight, let us compare the two dynamics \AVD{\alpha} and \DINAVD{\alpha,\beta} on a simple quadratic minimization problem, in which case the trajectories can be computed in closed form as explained in Appendix~\ref{SS:solquad}. Take $\cH= \R^2$ and $ f(x_1,x_2)=\frac{1}{2}(x_1^2+1000x_2^2)$, which is ill-conditioned. We take parameters $\alpha=3.1$, $\beta=1$, so as to obey the condition $\alpha >3$. Starting with initial conditions: $(x_1(1),x_2(1))=(1,1)$, $(\dot x_1(1),\dot x_2(1))=(0,0)$, we have the trajectories displayed in Figure~\ref{fig2D}. This illustrates the typical situation of an ill-conditioned minimization problem, where the wild oscillations of \AVD{\alpha} are neutralized by the Hessian damping in \DINAVD{\alpha,\beta} (see Appendix~\ref{SS:solquad} for further details).
\begin{figure}
\begin{center}
\includegraphics[width=\textwidth]{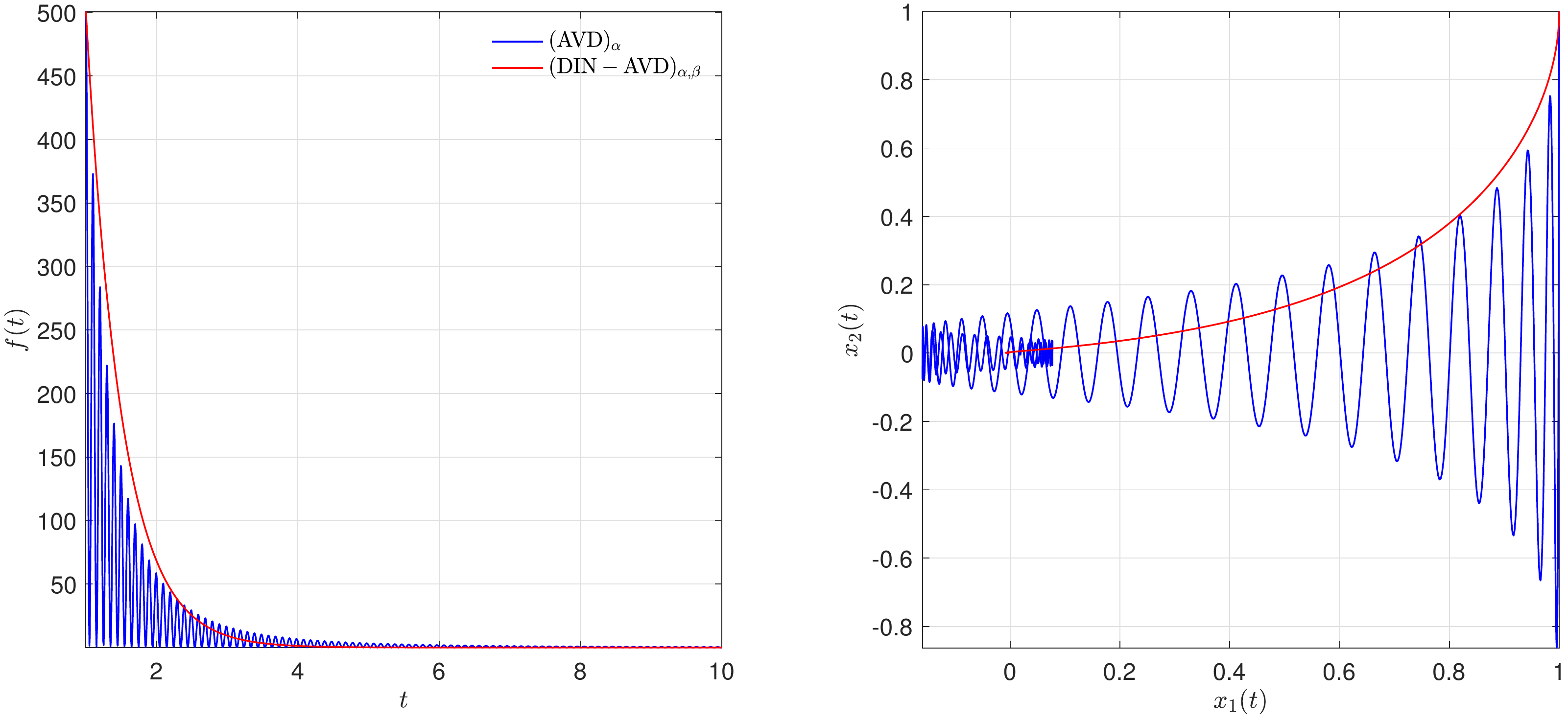}
\end{center}
\caption{Evolution of the objective (left) and trajectories (right) for \AVD{\alpha} ($\alpha=3.1)$ and \DINAVD{\alpha,\beta} ($\alpha=3.1,\beta=1$) on an ill-conditioned quadratic problem in $\R^2$.}
\label{fig2D}
\end{figure}

\subsection{Main algorithmic results}
Let us describe our main convergence rates for the gradient type algorithms. Corresponding results for the proximal algorithms are also obtained.

\paragraph{General convex function} Let  $f: \cH \to \R$ be  a  convex function whose gradient is $L$-Lipschitz continuous. Based on the discretization of $\DINAVD{\alpha, \beta, 1 + \frac{\beta}{t}}$, we consider
\[  
\begin{cases}
y_k=   x_{k} + \left(1- \frac{\alpha}{k}\right) ( x_{k}  - x_{k-1}) -
\beta \sqrt{s}   \left( \nabla f (x_k)  - \nabla f (x_{k-1}) \right) -  \frac{\beta \sqrt{s}}{k}\nabla f( x_{k-1})  	 \\
x_{k+1} = y_k - s \nabla f (y_k).
\end{cases}
\]
Suppose that $\alpha \geq 3$, $0 < \beta <  2 \sqrt{s}$, $sL \leq 1$. In Theorem~\ref{pr.decay_E_k}, we show that
\begin{eqnarray*}
&& (i) \, \,
f(x_k)-\min_{\cH} f = \cO 
\left(\dfrac{1}{k^2}\right)  \,  \text{ as } k\to +\infty;
\\
&& (ii)  \quad \sum_k  k^2 \| \nabla f (y_k) \|^2 < +\infty \text{ and } \sum_k  k^2 \| \nabla f (x_k) \|^2 < +\infty. \hspace{1cm}
\end{eqnarray*}

\paragraph{Strongly convex function} When $f: \cH \to \R$ is $\mu$-strongly convex for some $\mu >0$, our analysis relies on the autonomous dynamic ${\rm (DIN)_{\gamma, \beta}}$ with $\gamma = 2\sqrt{\mu}$. Based on its time discretization, we obtain linear convergence results for the values (hence the trajectory) and the gradients terms. Explicit discretization gives the inertial gradient algorithm

\begin{small}
\[
 x_{k+1} =   x_{k} + \frac{1 -\sqrt{\mu s}  }{1 +\sqrt{\mu s}}( x_{k}  - x_{k-1})-
\frac{\beta \sqrt{s} }{1 +\sqrt{\mu s}}  \left(\nabla f (x_k)  - \nabla f (x_{k-1})  \right) - \frac{s}{1 +\sqrt{\mu s}}\nabla f (x_k).
\]
\end{small}

\noindent Assuming that  $\nabla f$  is $L$-Lipschitz continuous, $ L$ sufficiently small and $  \beta \leq \displaystyle{ \frac{1}{\sqrt{\mu}}} $, it is shown in Theorem \ref{grad-inertiel-strongly convex} that, with
$q= \displaystyle{\frac{1}{1+ \demi \sqrt{\mu s}}}$ ( $0 <q <1$)
\[
f(x_k)-  \min_{\cH}f =\cO \pa{q^{k}} \quad \text{ and } \norm{x_k-x^\star} = \cO\pa{q^{k/2}} \mbox{  as} \,  k  \,\to + \infty ,
\]
Moreover, the gradients converge exponentially fast to zero.

\subsection{Contents}
The paper is organized as follows. Sections \ref{gen_conv_cont} and  
\ref{general_con_algo} deal with the case of general convex functions, respectively in the continuous case and the algorithmic cases. We improve the Nesterov convergence rates by showing in addition fast convergence of the gradients. Sections \ref{s_convex_cont} and \ref{s_convex-algo} deal with the same questions in the case of strongly convex functions, in which case, linear convergence results are obtained. Section~\ref{sec:numerics} is devoted to numerical illustrations. We conclude with some perspectives.

\section{Inertial dynamics for general convex functions}\label{gen_conv_cont}

Our analysis deals with the inertial system with Hessian-driven damping
\[
\DINAVD{\alpha, \beta, b} \quad \ddot{x}(t) + \frac{\alpha}{t} \dot{x}(t)  + \beta (t) \nabla^2 f (x(t))\dot{x} (t) +  b(t) \nabla f (x(t)) = 0.
\]
\subsection{Convergence rates}
\tcb{We start by stating a fairly general theorem on the convergence rates and integrability properties of $\DINAVD{\alpha,\beta,b}$ under appropriate conditions on the parameter functions $\beta(t)$ and $b(t)$. As we will discuss shortly, it turns out that for some specific choices of the parameters, one can recover most of the related results existing in the literature.}
The following quantities play a central role in our analysis: 
\begin{equation}\label{bacic-def}
w(t)\eqdef b(t)-\dot{\beta}(t) -\dfrac{\beta(t)}{t} \, \mbox{ and } \, \delta(t)\eqdef t^2 w(t).
\end{equation}
\begin{theorem} \label{ACFR,rescale}
\tcb{Consider $\DINAVD{\alpha,\beta,b}$, where \eqref{eq:mainassum} holds}. Take $\alpha \geq 1$.
Let  $x: [t_0, +\infty[ \rightarrow \cH$ be a solution trajectory of $\DINAVD{\alpha, \beta, b}$. Suppose that the following growth conditions are satisfied: 
\begin{eqnarray*}
&& (\mathcal{G}_{2}) \quad b(t) > \dot{\beta}(t) +\dfrac{\beta(t)}{t}; 
\\
&& (\mathcal{G}_{3}) \quad  t\dot{w}(t)\leq (\alpha-3)w(t).\hspace{3cm}
\end{eqnarray*}
Then, $w(t)$ is positive and 
\begin{eqnarray*}
&& (i) \, \,
f(x(t))-\min_{\cH} f = \cO 
\left(\dfrac{1}{t^2 w(t)}\right) \, \mbox{ as } \, t \to + \infty;
\\
&& (ii) \, \int_{t_{0}}^{+\infty} t^2 \beta(t) w(t)\norm{\nabla f(x(t))}^{2} dt<+\infty;
\\
&& (iii) \,  \int_{t_{0}}^{+\infty} t \Big(  (\alpha-3)w(t) - t\dot{w}(t)\Big)(f(x(t))-\min_{\cH}f) dt <+\infty .
\end{eqnarray*}
\end{theorem}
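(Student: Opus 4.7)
The plan is to carry out a Lyapunov analysis. The central observation is that the Hessian term can be rewritten as
$\beta(t)\nabla^{2} f(x(t))\dot{x}(t) = \tfrac{d}{dt}\bigl(\beta(t)\nabla f(x(t))\bigr) - \dot{\beta}(t)\nabla f(x(t))$,
so that $\DINAVD{\alpha,\beta,b}$ becomes
$\ddot{x}(t) + \tfrac{\alpha}{t}\dot{x}(t) + \tfrac{d}{dt}\bigl(\beta(t)\nabla f(x(t))\bigr) + (b(t) - \dot{\beta}(t))\nabla f(x(t)) = 0$.
This motivates introducing the auxiliary ``velocity'' $y(t) \eqdef \dot{x}(t) + \beta(t)\nabla f(x(t))$ and, for a fixed $x^{\star}\in S$, the aggregated variable $v(t) \eqdef (\alpha-1)(x(t) - x^{\star}) + t\, y(t)$.

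First I would verify the clean identity $\dot{v}(t) = -t\,w(t)\,\nabla f(x(t))$. This is obtained by direct differentiation of $v(t)$, substitution of $t\ddot{x}(t)$ from the ODE, and simplification via the definition $w(t) = b(t) - \dot{\beta}(t) - \beta(t)/t$. Note that $(\mathcal{G}_{2})$ says exactly $w(t) > 0$, so along the flow the ``restoring force'' points against $\nabla f$; this is what will enforce accelerated behaviour.

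Next I would introduce the Lyapunov functional
$\mathcal{E}(t) \eqdef \delta(t)\bigl(f(x(t)) - \min_{\cH} f\bigr) + \tfrac{1}{2}\|v(t)\|^{2}$,
with $\delta(t) = t^{2} w(t)$, and differentiate. Using $\delta(t)\langle \nabla f(x(t)), \dot{x}(t)\rangle = t^{2} w(t)\, \tfrac{d}{dt} f(x(t))$, expanding
$\langle v(t), \dot{v}(t)\rangle = -t\,w(t)\,\bigl\langle (\alpha-1)(x(t)-x^{\star}) + t\, y(t),\; \nabla f(x(t))\bigr\rangle$,
and invoking convexity $\langle \nabla f(x(t)),\, x(t) - x^{\star}\rangle \geq f(x(t)) - \min_{\cH} f$, the awkward term $t^{2} w(t)\,\tfrac{d}{dt} f(x(t))$ cancels exactly, leaving
$\dot{\mathcal{E}}(t) \leq \bigl[\dot{\delta}(t) - (\alpha-1)\, t\, w(t)\bigr]\bigl(f(x(t)) - \min_{\cH} f\bigr) - t^{2}\beta(t)\, w(t)\,\|\nabla f(x(t))\|^{2}$.
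A direct computation gives $\dot{\delta}(t) - (\alpha-1)\, t\, w(t) = -t\bigl[(\alpha-3)\, w(t) - t\,\dot{w}(t)\bigr]$, which is non-positive precisely under $(\mathcal{G}_{3})$; the two residual terms on the right are then minus the integrands of (ii) and (iii).

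Conclusions (i)--(iii) follow by integrating this inequality on $[t_{0}, t]$, using $\mathcal{E}(t) \geq \delta(t)\bigl(f(x(t)) - \min_{\cH} f\bigr) \geq 0$. The delicate point I anticipate is the sign bookkeeping in $\dot{\mathcal{E}}$, in particular the happy cancellation of the $t^{2} w(t)\,\tfrac{d}{dt} f(x(t))$ contributions, which is what makes an auxiliary corrective quadratic term in $\|x(t) - x^{\star}\|^{2}$ (standard in the analysis of the plain $\AVD{\alpha}$ system) unnecessary here.
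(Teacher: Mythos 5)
Your proposal is correct and follows essentially the same route as the paper: the same aggregated variable $v(t)=(\alpha-1)(x(t)-x^\star)+t\pa{\dot{x}(t)+\beta(t)\nabla f(x(t))}$, the same identity $\dot{v}(t)=-t\,w(t)\nabla f(x(t))$, the same Lyapunov function $\delta(t)(f(x(t))-\min_{\cH}f)+\frac{1}{2}\norm{v(t)}^2$ with $\delta(t)=t^2w(t)$, and the same cancellation of the $\dotp{\nabla f(x(t))}{\dot{x}(t)}$ terms followed by convexity and $(\mathcal{G}_3)$. The sign bookkeeping you flag, including $\dot{\delta}(t)-(\alpha-1)tw(t)=-t\brac{(\alpha-3)w(t)-t\dot{w}(t)}$, matches the paper's computation exactly.
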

\begin{proof}
Given  $x^\star \in \argmin_{\cH} f$,  define for $t\geq t_0$
\begin{equation}\label{eq:lyapcont}
E(t)\eqdef\delta(t)(f(x(t))-f(x^\star))+ \frac{1}{2}\norm{v(t)}^{2},
\end{equation}
where \, $v(t)\eqdef(\alpha-1)(x(t)-x^\star)+t\pa{\dot{x}(t)+\beta(t)\nabla f(x(t)}.$

\noindent The function $E(\cdot)$ will serve as a Lyapunov function. Differentiating $E$ gives
\begin{equation}\label{der-E}
\dfrac{d}{dt}E(t)=\dot{\delta}(t)(f(x(t))-f(x^\star))+\delta(t) \dotp{\nabla f(x(t))}{\dot{x}(t)}+ \dotp{v(t)}{\dot{v}(t)}.
\end{equation}
Using equation $\DINAVD{\alpha, \beta, b}$, we have
 $$\begin{array}{lll}
\dot{v}(t) & = & \alpha\dot{x}(t)+\beta(t) \nabla f(x(t))
+ t\big[ \ddot{x}(t)+\dot{\beta}(t)\nabla f(x(t))+\beta(t) \nabla^{2}f(x(t))\dot{x}(t)\big]\vspace{2mm}\\ 
 & = & \alpha\dot{x}(t)+ \beta(t) \nabla f(x(t))+t\big[- \frac{\alpha}{t}\dot{x}(t)+(\dot{\beta}(t)-b(t))\nabla f(x(t))\big]\vspace{2mm}\\ 
 & = & t\big[\dot{\beta}(t)+\dfrac{\beta(t)}{t}-b(t)\big]\nabla f(x(t)).
\end{array} $$
Hence,
$$\begin{array}{lll}
\dotp{v(t)}{\dot{v}(t)} 
& =&   (\alpha-1)t\Big(\dot{\beta}(t)+\dfrac{\beta(t)}{t}-b(t)\Big)\dotp{\nabla f(x(t))}{x(t)-x^\star}\\
 & & +t^{2}\Big(\dot{\beta}(t)+\dfrac{\beta(t)}{t}-b(t)\Big)\dotp{\nabla f(x(t))}{\dot{x}(t)}\\ 
 &  & +t^{2}\beta(t)\Big(\dot{\beta}(t) +\dfrac{\beta(t)}{t}-b(t)\Big)\norm{\nabla f(x(t))}^{2}.
\end{array} $$
Let us go back to \eqref{der-E}. According to the choice of $\delta(t)$, the terms $\dotp{\nabla f(x(t))}{\dot{x}(t)}$ cancel, which gives
\[
\begin{array}{lll}
\dfrac{d}{dt}E(t)&=&\dot{\delta}(t)(f(x(t))-f(x^\star))+\frac{(\alpha-1)}{t}\delta(t)\dotp{\nabla f(x(t))}{x^\star-x(t)} \\
&-&\beta(t)\delta(t)\norm{\nabla f(x(t))}^{2}.
\end{array} 
\]
Condition $(\mathcal{G}_{2})$ gives $\delta(t) >0 $. Combining this equation with convexity of $f$,
\[ 
f(x^\star)-f(x(t)) \geq\dotp{\nabla f(x(t))}{x^\star-x(t)} ,
\]
we obtain the inequality
\begin{equation}\label{bacic-Liap-22}
\dfrac{d}{dt}E(t)+\beta(t)\delta(t)\norm{\nabla f(x(t))}^{2}+  \Big[\frac{(\alpha-1)}{t}\delta(t) -\dot{\delta}(t)\Big](f(x(t))-f(x^\star))\leq 0. 
\end{equation}
Then note that
\begin{equation}\label{bacic-Liap-22-b}
 \frac{(\alpha-1)}{t}\delta(t) -\dot{\delta}(t)= t \Big(  (\alpha-3)w(t) - t\dot{w}(t)\Big).
\end{equation}
Hence, condition $(\mathcal{G}_{3})$ writes equivalently
\begin{equation}\label{222}
 \frac{(\alpha-1)}{t}\delta(t) - \dot{\delta}(t) \geq 0,
\end{equation}
which, by \eqref{bacic-Liap-22}, gives $\dfrac{d}{dt}E(t)\leq 0$. Therefore, $E(\cdot)$ is non-increasing, and hence
$E(t) \leq E(t_0)$. Since all the terms that enter $E(\cdot)$ are nonnegative, \tcb{we obtain $(i)$}.
%
Then, by integrating \eqref{bacic-Liap-22} we get
\[
\int_{t_{0}}^{+\infty} \beta(t)\delta(t)\norm{\nabla f(x(t))}^{2} dt \leq E(t_0 )<+\infty ,
\]
and
\[
\int_{t_{0}}^{+\infty} t \Big(  (\alpha-3)w(t) - t\dot{w}(t)\Big)(f(x(t))-f(x^\star)) dt \leq E(t_0 )<+\infty ,
\]
which gives  $(ii)$ and $(iii)$, and completes the proof. \qed  
\end{proof}
%

\subsection{Particular cases}\label{subsec:particularcont}
\tcb{As anticipated above, by specializing the functions $\beta(t)$ and $b(t)$, we recover most known results in the literature; see hereafter for each specific case and related literature. For all these cases, we will argue also on the interest of our generalization.}

\paragraph{\textbf{Case~1}} The $\DINAVD{\alpha, \beta}$ system corresponds to $\beta(t) \equiv \beta$ and $b(t) \equiv 1$. In this case, $w(t)= 1- \frac{\beta}{t}$. Conditions 
$(\mathcal{G}_{2}) $ and $(\mathcal{G}_{3}) $ are satisfied by taking $\alpha > 3$ and $ t> \frac{\alpha-2}{\alpha -3} \beta$. Hence, as a consequence of Theorem \ref{ACFR,rescale}, we obtain the following result of Attouch-Peypouquet-Redont \cite{APR2}:
\begin{theorem}[\cite{APR2}] \label{APR,2016}
Let  $x: [t_0, +\infty[ \rightarrow \cH$ be a trajectory of the dynamical system $\DINAVD{\alpha, \beta}$. Suppose $\alpha > 3$.  Then
$$   f(x(t))-  \min_{\cH}f =\cO \left( \frac{1}{t^2} \right) \, \, \mbox{ and } \, \, \displaystyle{\int_{t_0}^{\infty} t^2 \|\nabla f (x(t)) \|^2  dt}    < + \infty.
$$
\end{theorem}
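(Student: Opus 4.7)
The plan is to derive this result as a direct specialization of Theorem~\ref{ACFR,rescale} to the constant choices $\beta(t)\equiv\beta$ and $b(t)\equiv 1$. First I would instantiate the auxiliary functions introduced in \eqref{bacic-def}: since $\dot\beta(t)=0$, one immediately reads off $w(t)=1-\beta/t$, $\dot w(t)=\beta/t^2$, and $\delta(t)=t^2-\beta t$. The key observation that will drive everything is that $\delta(t)\sim t^2$ as $t\to+\infty$, so the abstract $\cO(1/\delta(t))$ rate collapses to the classical $\cO(1/t^2)$.

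Next I would verify the two growth conditions. Condition $(\mathcal{G}_2)$ reduces to $1>\beta/t$, i.e.\ $t>\beta$. For $(\mathcal{G}_3)$, the inequality $t\dot w(t)\leq(\alpha-3)w(t)$ becomes $\beta/t\leq(\alpha-3)(1-\beta/t)$, which after a single line of algebra is equivalent to $t\geq T_0\eqdef\frac{(\alpha-2)\beta}{\alpha-3}$. Because $\alpha>3$ forces $T_0>\beta$, both conditions are automatically in force on $[T_0,+\infty[$, and we do not need to verify $(\mathcal{G}_2)$ separately.

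I would then apply Theorem~\ref{ACFR,rescale} on $[\max(t_0,T_0),+\infty[$. Part $(i)$ yields $f(x(t))-\min_\cH f=\cO\pa{1/(t^2-\beta t)}=\cO\pa{1/t^2}$, and part $(ii)$ specializes to $\beta\int (t^2-\beta t)\norm{\nabla f(x(t))}^{2}dt<+\infty$; since $t^2-\beta t\geq t^2/2$ for $t\geq 2\beta$, this implies $\int t^2\norm{\nabla f(x(t))}^{2}dt<+\infty$. To cover the case $t_0<T_0$, I would argue by continuity of $f\circ x$ and $\nabla f\circ x$ on the compact interval $[t_0,T_0]$: both the values and the weighted gradient integral are finite there, so the asymptotic rate and the integrability over the full half-line $[t_0,+\infty[$ are unaffected.

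There is no genuine obstacle here: the Lyapunov work has already been absorbed into the proof of Theorem~\ref{ACFR,rescale}, and what remains is purely a verification plus an asymptotic comparison $\delta(t)\sim t^2$. The only mildly delicate point worth flagging in the write-up is the appearance of the threshold $T_0=(\alpha-2)\beta/(\alpha-3)$, which encodes precisely the regime where the Hessian-damping correction stops degrading the Lyapunov descent, and the observation that this threshold is harmless for asymptotic statements.
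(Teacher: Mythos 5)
Your proposal is correct and coincides with the paper's own argument: the paper likewise obtains Theorem~\ref{APR,2016} by specializing Theorem~\ref{ACFR,rescale} to $\beta(t)\equiv\beta$, $b(t)\equiv 1$, computing $w(t)=1-\beta/t$, and noting that $(\mathcal{G}_2)$ and $(\mathcal{G}_3)$ hold for $\alpha>3$ and $t>\frac{\alpha-2}{\alpha-3}\beta$, which is exactly your threshold $T_0$. Your additional remarks (that $(\mathcal{G}_3)$ subsumes $(\mathcal{G}_2)$ beyond $T_0$, and that the initial compact segment is harmless) are correct fillings-in of details the paper leaves implicit; the only tacit assumption, shared with the paper, is $\beta>0$ so that the weighted gradient integral in part $(ii)$ is non-degenerate.
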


\paragraph{\textbf{Case~2}} The system$\DINAVD{\alpha, \beta, 1+ \frac{\beta}{t}}$, which corresponds to $\beta(t) \equiv \beta$ and $b(t) = 1+ \frac{\beta}{t}$, was considered in \cite{SDJS}. 
Compared to $\DINAVD{\alpha, \beta}$ it has the additional coefficient $\frac{\beta}{t}$ in front of the gradient term. This vanishing coefficient will facilitate the computational aspects while keeping the structure of the dynamic. Observe that in this case, $w(t)\equiv 1$. Conditions 
$(\mathcal{G}_{2}) $ and $(\mathcal{G}_{3}) $ boil down to  $\alpha \geq 3$. Hence, as a consequence of Theorem \ref{ACFR,rescale}, we obtain

\begin{theorem}  \label{APR,variant}
Let  $x: [t_0, +\infty[ \rightarrow \cH$ be a solution trajectory of the dynamical system $\DINAVD{\alpha, \beta, 1+ \frac{\beta}{t}}$. Suppose $\alpha\geq 3$. Then 
$$   f(x(t))-  \min_{\cH}f =\cO \left( \frac{1}{t^2} \right) \, \, \mbox{ and } \, \, \displaystyle{\int_{t_0}^{\infty} t^2 \|\nabla f (x(t)) \|^2  dt}    < + \infty.
$$
\end{theorem}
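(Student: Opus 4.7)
The plan is to recognize Theorem \ref{APR,variant} as a direct specialization of Theorem \ref{ACFR,rescale} to the parameters $\beta(t) \equiv \beta$ and $b(t) = 1 + \beta/t$. The real task is simply to verify the two growth conditions $(\mathcal{G}_{2})$ and $(\mathcal{G}_{3})$ for this choice, and then to read off the conclusions after computing the auxiliary quantity $w(t)$.

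The key computation is that of $w(t) = b(t) - \dot{\beta}(t) - \beta(t)/t$. Since $\beta(t)$ is constant we have $\dot{\beta}(t) \equiv 0$, and the term $\beta/t$ in $b(t)$ exactly cancels $\beta(t)/t$, leaving $w(t) \equiv 1$ and hence $\delta(t) = t^{2}$. This is the central observation, and it makes both growth conditions essentially trivial: $(\mathcal{G}_{2})$ reduces to $1 + \beta/t > \beta/t$, which holds for all $t > 0$, while $(\mathcal{G}_{3})$ reduces to $0 \leq \alpha - 3$, which is exactly the hypothesis $\alpha \geq 3$.

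With the hypotheses of Theorem \ref{ACFR,rescale} verified, the conclusions follow immediately. Part~(i) gives $f(x(t)) - \min_{\cH} f = \cO(1/(t^{2} w(t))) = \cO(1/t^{2})$, and part~(ii) yields $\beta \int_{t_{0}}^{+\infty} t^{2} \|\nabla f(x(t))\|^{2}\, dt < +\infty$; dividing by the positive constant $\beta$ delivers the stated integrability bound. Part~(iii) of Theorem \ref{ACFR,rescale} is vacuous here since $\dot{w} \equiv 0$ makes its left-hand side reduce to $t(\alpha-3)(f(x(t)) - \min f)$, which is already controlled by~(i).

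There is no real obstacle in this derivation; the structural interest lies elsewhere. The extra vanishing coefficient $\beta/t$ in front of $\nabla f$ has been deliberately tuned so that $w \equiv 1$. Compared with Case~1 (where $w(t) = 1 - \beta/t$ forces a restriction on $t$ together with the strict inequality $\alpha > 3$), the present design both sharpens the admissible range of $\alpha$ to $\alpha \geq 3$ and, as announced, will simplify the subsequent time discretization.
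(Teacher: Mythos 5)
Your proposal is correct and follows exactly the route the paper takes (its ``Case~2'' in Section~\ref{subsec:particularcont}): specialize Theorem~\ref{ACFR,rescale} to $\beta(t)\equiv\beta$, $b(t)=1+\beta/t$, observe $w(t)\equiv 1$ so that $(\mathcal{G}_2)$ is automatic and $(\mathcal{G}_3)$ reduces to $\alpha\geq 3$, and read off conclusions (i) and (ii), the latter after dividing by $\beta$ (which, as in the paper, tacitly assumes $\beta>0$). Your closing aside that (iii) is ``already controlled by (i)'' is not quite right when $\alpha>3$ --- it then gives the genuinely extra information $\int t\,(f(x(t))-\min f)\,dt<+\infty$ --- but this does not affect the proof of the stated theorem.
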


\paragraph{\textbf{Case~3}} The dynamical system \DINAVD{\alpha, 0, b}, which corresponds to $\beta(t) \equiv 0$, was considered by Attouch-Chbani-Riahi in \cite{ACR-rescale}. It comes also naturally from the time scaling of \AVD{\alpha}. In this case, we have $w (t) = b(t)$. Condition $(\mathcal{G}_{2})$ is equivalent to $b(t)> 0$.
$(\mathcal{G}_{3})$ becomes  
\[
t\dot{b}(t)\leq (\alpha-3) b(t),
\]
which is precisely the condition introduced in \cite[Theorem~8.1]{ACR-rescale}. Under this condition, we have the convergence rate 
\[
f(x(t))-\min_{\cH}f =\cO\left(\dfrac{1}{t^{2}b(t)}\right) \, \mbox{ as } \, t \to + \infty.
\]
This makes clear the acceleration effect due to the time scaling. For  $b(t)= t^r$, we have $ f(x(t))-\min_{\cH}f =\cO\pa{\dfrac{1}{t^{2+r}}}$, under the assumption $\alpha \geq 3+r$.

\paragraph{\textbf{Case~4}} Let us illustrate our results in the case $ b (t) = ct^b$, $ \beta (t) = t^\beta$. We have
$ w (t) = ct^b-(\beta +1)t^{\beta -1},  w'(t) =cbt^{b-1}-(\beta ^2-1)t^{\beta -2}. $
The conditions  $(\mathcal{G}_{2}),(\mathcal{G}_{3}) $ can be written respectively as:
\begin{equation}\label{condgi}
ct^b>(\beta +1)t^{\beta -1}\text{ and } c(b-\alpha+3)t^b\leq (\beta +1)(\beta -\alpha +2)t^{\beta -1}.
\end{equation}
When $b=\beta -1$, the conditions \eqref{condgi} are equivalent to
$
\beta<c-1 \; \text{ and } \; \beta\leq \alpha -2,
$
which gives the convergence rate $f(x(t))-\min_{\cH} f = \cO \left(\dfrac{1}{t^{\beta +1}}\right)$. 

Let us apply these choices to the quadratic function $f: (x_1,x_2)\in \R^2 \mapsto \left( x_1+x_2\right)^2/2$. $f$ is convex but not strongly so, and $\argmin_{\R^2}f =\{(x_1,x_2)\in\R^2 : x_2=-x_1\}$. The closed-form solution of the ODE with this choice of $\beta(t)$ and $b(t)$ is given in Appendix~\ref{SS:solquad}. We choose the values $\alpha=5, \beta=3, b=\beta-1=2$ and $c=5$ in order to satisfy condition \eqref{condgi}. The left panel of Figure~\ref{fig4} depicts the convergence profile of the function value, and its right panel the trajectories associated with the system $ \DINAVD{\alpha, \beta, b}$ for different scenarios of the parameters. Once again, the damping of oscillations due to the presence of the Hessian is observed.
\begin{figure}
\begin{center}
\includegraphics[width=\textwidth]{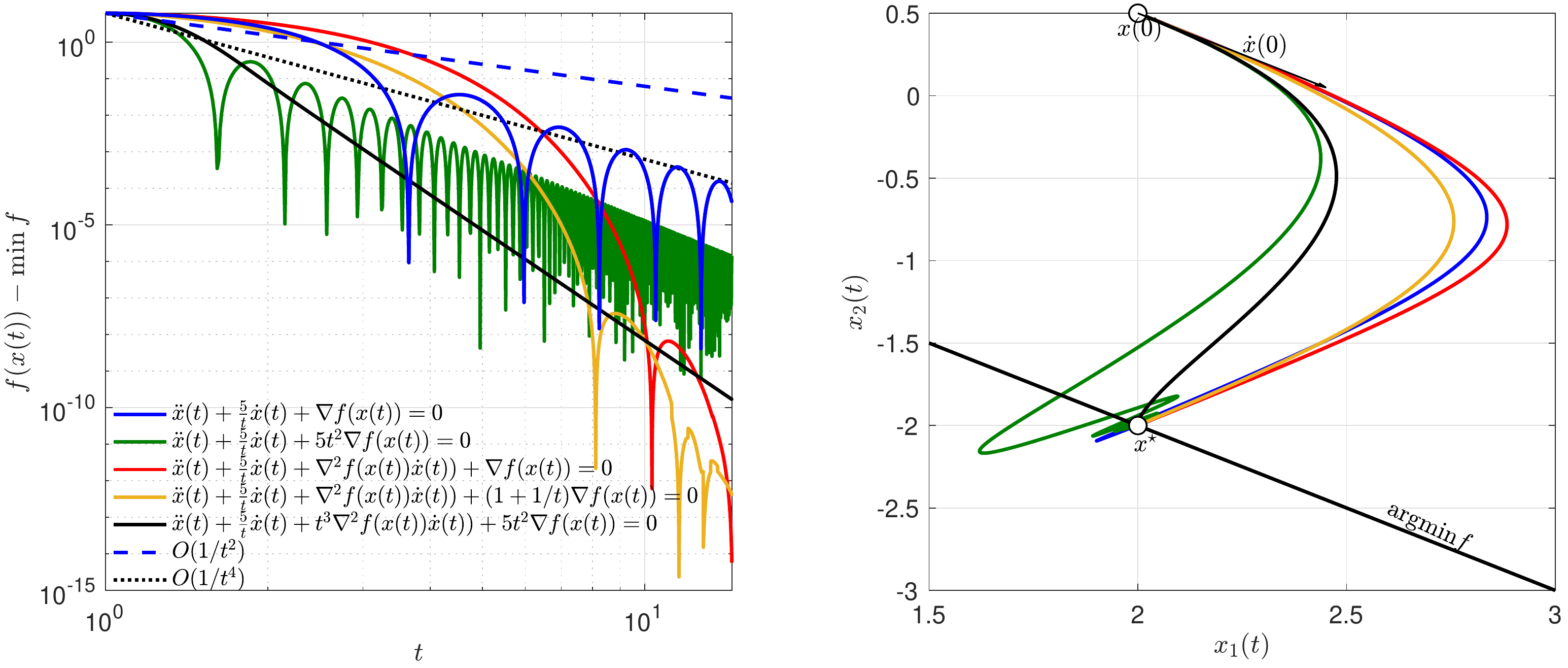}
\caption{Convergence of the objective values and trajectories associated with the system $ \DINAVD{\alpha, \beta, b}$ for different choices of $\beta(t)$ and $b(t)$.}
\label{fig4}
\end{center}
\end{figure}

\paragraph{\textbf{Discussion}}
\tcb{Let us first apply the above choices of $(\alpha,\beta(t),b(t))$ for each case to the quadratic function $f: (x_1,x_2)\in \R^2 \mapsto \left(x_1+x_2\right)^2/2$. $f$ is convex but not strongly so, and $\argmin_{\R^2}  f =\{(x_1,x_2)\in\R^2 : x_2=-x_1\}$. The closed-form solution of \DINAVD{\alpha, \beta, b} with each choice of $\beta(t)$ and $b(t)$ is given in Appendix~\ref{SS:solquad}. For all cases, we set $\alpha=5$. For case~1, we set $\beta=b=1$. For case~2, we take $\beta=1$. As for case~3, we set $r=2$. For case~4, we choose $\beta=3, b=\beta-1=2$ and $c=5$ in order to satisfy condition \eqref{condgi}. The left panel of Figure~\ref{fig4} depicts the convergence profile of the function value as well as the predicted convergence rates $\cO\pa{1/t^2}$ and $\cO\pa{1/t^4}$ (the latter is for cases with time (re)scaling). The right panel of Figure~\ref{fig4} displays the associated trajectories for the different scenarios of the parameters.

The rates one can achieve in our Theorem~\ref{ACFR,rescale} look similar to those in Theorem~\ref{APR,2016} and Theorem~\ref{APR,variant}. Thus one may wonder whether our framework allowing for more general variable parameters is necessary. The answer is affirmative for several reasons. First, our framework can be seen as a one-stop shop allowing for a unified analysis with an unprecedented level of generality. It also handles time (re)scaling straightforwardly by appropriately setting the functions $\beta(t)$ and $b(t)$ (see Case~3 and 4 above). In addition, though these convergence rates appear similar, one has to keep in mind that these are upper-bounds. It turns out from our detailed example in the quadratic case introduced above in Figure~\ref{fig4}, that not only the oscillations are reduced due to the presence of Hessian damping, but also the trajectory and the objective can be made much less oscillatory thanks to the flexible choice of the parameters allowed by our framework. This is yet again another evidence of the interest of our setting. 
}

\section{Inertial  algorithms for general convex functions}\label{general_con_algo}

\subsection{Proximal algorithms}
\subsubsection{Smooth case}
\tcb{Writing the term $\nabla^2f(x(t))\dot{x}(t)$ in $\DINAVD{\alpha, \beta, b}$ as the time derivative of $\nabla f(x(t))$, and taking the implicit time discretization of this system, with step size $h>0$, gives}
\begin{small}
\[
\frac{ x_{k+1} - 2x_{k}+ x_{k-1} }{h^2} +   \frac{\alpha}{kh} \frac{x_{k+1} - x_{k}}{h} + \frac{\beta_k}{h} (\nabla f( x_{k+1}) - \nabla f( x_{k})   ) +  b_k\nabla f( x_{k+1}) = 0.
\]
\end{small}
Equivalently
\begin{eqnarray}
&& k(x_{k+1} - 2x_{k}+ x_{k-1}) + \alpha(x_{k+1} - x_{k}) 
+ \beta_k hk (\nabla f( x_{k+1}) - \nabla f( x_{k})   ) \nonumber \\
&& + b_k h^2 k \nabla f( x_{k+1})=0. \label{basic-eq}
\end{eqnarray}
\tcb{Observe that this requires $f$ to be only of class $\mathcal C^1$}. Set now $s = h^2$. We obtain the following algorithm with $\beta_k$ and $b_k$ varying with $k$:
\begin{eqnarray*}
\boxed{
\begin{array}{rcl}
&& {\rm \mbox{$\IPAHD$: Inertial Proximal Algorithm with Hessian Damping.}}\\
\hline
\smallskip\\ 
&&  \mbox{Step} \, k:  \mbox{ Set} \,  \, \mu_k \eqdef \frac{k}{k + \alpha } (\beta_k \sqrt{s} + sb_k  ) .
\vspace{2mm}\\
&& {\IPAHD} \,  
\begin{cases}
y_k=   x_{k} + \left( 1- \frac{\alpha}{k + \alpha}\right) ( x_{k}  - x_{k-1}) + \beta_k \sqrt{s}   \left( 1- \frac{\alpha}{k + \alpha}\right) \nabla f (x_k)    \\
x_{k+1} =  \prox_{ \mu_k f}(y_k).
\end{cases}
\vspace{2mm}
\end{array}
}
\end{eqnarray*}
\begin{theorem} \label{ACFR,rescale-algo}
\tcb{Assume that $f: \cH \rightarrow \R$ is a convex $\mathcal C^1$ function. Suppose that $\alpha \geq 1$.}
Set
\begin{equation}\label{def:delta}
\delta_k \eqdef  h\Big(   b_k h k- \beta_{k+1} - k(\beta_{k+1} - \beta_{k})\Big)(k+1),
\end{equation}
 and suppose that the following growth conditions are satisfied: 
\begin{eqnarray*}
&& (\mathcal{G}^{\mathrm{dis}}_{2}) \quad b_k h k  -\beta_{k+1}  - k(\beta_{k+1} - \beta_{k}) > 0; \hspace{2cm}
\\
&& (\mathcal{G}^{\mathrm{dis}}_{3}) \quad \delta_{k+1} - \delta_k \leq (\alpha -1)
\frac{\delta_k}{k+1}.
\end{eqnarray*}
Then, $\delta_k$ is positive and, for any sequence $\seq{x_k}$ generated by $\IPAHD$
\begin{eqnarray*}
&& (i) \, \,
f(x_k)-\min_{\cH} f = \cO 
\left(\dfrac{1}{\delta_k}\right)= \cO 
\left(\dfrac{1}{k(k+1)\big( b_k h - \frac{\beta_{k+1}}{k} - (\beta_{k+1} - \beta_{k})\big)}\right) 
\\
&& (ii) \,  \sum_k \delta_k \beta_{k+1} \|\nabla f( x_{k+1})\|^2 < + \infty. 
\end{eqnarray*}
\end{theorem}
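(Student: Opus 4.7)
The strategy is to discretize the Lyapunov analysis of Theorem~\ref{ACFR,rescale}. Fix $x^\star \in \argmin_{\cH} f$ and, in analogy with \eqref{eq:lyapcont}, define
\[
E_k \eqdef \delta_k \pa{f(x_k) - f(x^\star)} + \frac{1}{2}\norm{v_k}^2, \qquad v_k \eqdef (\alpha-1)(x_k - x^\star) + k(x_k - x_{k-1}) + kh\beta_k\nabla f(x_k).
\]
Note first that $(\mathcal{G}^{\mathrm{dis}}_{2})$ directly gives $\delta_k > 0$, so each term of $E_k$ is non-negative. The goal is to show $\seq{E_k}$ is non-increasing, up to a term controlling $\norm{\nabla f(x_{k+1})}^2$.

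The first key step is to compute $v_{k+1} - v_k$ by means of the recursion \eqref{basic-eq}. Replacing the displacement block $(k+1)(x_{k+1} - x_k) - k(x_k - x_{k-1})$ by $(1-\alpha)(x_{k+1} - x_k) - \beta_k hk(\nabla f(x_{k+1}) - \nabla f(x_k)) - b_k h^2 k \nabla f(x_{k+1})$, the contributions in $(x_{k+1} - x_k)$ cancel against the $(\alpha-1)(x_{k+1}-x_k)$ arising from $(\alpha - 1)[(x_{k+1} - x^\star) - (x_k - x^\star)]$, while the $\nabla f(x_k)$ contributions cancel exactly against $-kh\beta_k \nabla f(x_k)$. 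One is left with
\[
v_{k+1} - v_k = -\frac{\delta_k}{k+1}\nabla f(x_{k+1}),
\]
the precise discrete analogue of $\dot v(t) = -tw(t)\nabla f(x(t))$.

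Next, I expand $\frac{1}{2}\norm{v_{k+1}}^2 - \frac{1}{2}\norm{v_k}^2 = \dotp{v_{k+1}}{v_{k+1}-v_k} - \frac{1}{2}\norm{v_{k+1}-v_k}^2$ and substitute the identity above. The inner product $\dotp{v_{k+1}}{\nabla f(x_{k+1})}$ splits into three pieces, each bounded from below by convexity of $f$: $\dotp{\nabla f(x_{k+1})}{x_{k+1} - x^\star} \geq f(x_{k+1}) - f(x^\star)$ and $\dotp{\nabla f(x_{k+1})}{x_{k+1} - x_k} \geq f(x_{k+1}) - f(x_k)$ (using $\alpha \geq 1$ and $\delta_k > 0$ to keep inequalities in the right direction). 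Adding $\delta_{k+1}(f(x_{k+1}) - f(x^\star)) - \delta_k(f(x_k) - f(x^\star))$ and regrouping, the $f(x_k)$ contributions cancel and one arrives at
\[
E_{k+1} - E_k \leq \pa{\delta_{k+1} - \delta_k - \tfrac{(\alpha-1)\delta_k}{k+1}}\pa{f(x_{k+1}) - f(x^\star)} - h\delta_k \beta_{k+1}\norm{\nabla f(x_{k+1})}^2 - \tfrac{\delta_k^2}{2(k+1)^2}\norm{\nabla f(x_{k+1})}^2.
\]
Condition $(\mathcal{G}^{\mathrm{dis}}_{3})$ is precisely what is required to make the bracketed factor non-positive, so all three terms on the right are non-positive. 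Consequently $\seq{E_k}$ is non-increasing, the bound $\delta_k(f(x_k) - f(x^\star)) \leq E_k \leq E_{k_0}$ yields $(i)$, and telescoping the $\norm{\nabla f(x_{k+1})}^2$ contributions over $k \geq k_0$ yields $(ii)$.

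The main obstacle is the algebraic bookkeeping in the computation of $v_{k+1} - v_k$: the cancellations that reduce it to a single multiple of $\nabla f(x_{k+1})$, and hence produce the exact discrete image of the continuous Lyapunov decay, only occur because of the particular subscripting adopted in the definition of $v_k$, pairing $\beta_k$ with $\nabla f(x_k)$ and $k$ with the backward displacement $x_k - x_{k-1}$. This choice is itself dictated by the implicit discretization of $\DINAVD{\alpha,\beta,b}$ underlying $\IPAHD$, and it is what forces $\delta_k$ to take the form \eqref{def:delta} involving both $\beta_{k+1}$ and $\beta_k$.
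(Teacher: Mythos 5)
Your proposal is correct and follows essentially the same route as the paper's own proof: the same Lyapunov sequence $E_k$ with the same $v_k$, the same computation showing $v_{k+1}-v_k = hC_k\nabla f(x_{k+1}) = -\tfrac{\delta_k}{k+1}\nabla f(x_{k+1})$, the same three-point expansion of $\tfrac12\norm{v_{k+1}}^2-\tfrac12\norm{v_k}^2$, and the same use of convexity and of $(\mathcal{G}^{\mathrm{dis}}_{2})$, $(\mathcal{G}^{\mathrm{dis}}_{3})$ to obtain the decrease of $E_k$ and the summability of the gradient terms. The only difference is cosmetic: the paper writes the prefactor as $hC_k$ with $C_k=\beta_{k+1}+k(\beta_{k+1}-\beta_k)-b_khk$ and only afterwards chooses $\delta_k=-hC_k(k+1)$, whereas you substitute this value from the start.
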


\tcb{
Before delving into the proof, the following remarks on the choice/growth of the parameters are in order. 
\begin{remark}
We first observe that condition $(\mathcal{G}^{\mathrm{dis}}_{2})$ is nothing but a forward (explicit) discretization of its continuous analogue $(\mathcal{G}_{2})$. In addition, in view of \eqref{bacic-def}, $(\mathcal{G}_{3})$ equivalently reads
\[
t\dot{\delta}(t)\leq (\alpha-1)\delta(t).
\]
In turn, \eqref{def:delta} and $(\mathcal{G}^{\mathrm{dis}}_{3})$ are explicit discretizations of \eqref{bacic-def} and $(\mathcal{G}_{3})$ respectively.
\end{remark}

\begin{remark}
The convergence rate on the objective values in Theorem~\ref{ACFR,rescale-algo}(i) is \linebreak$\cO\pa{1/((k+1)k}$ with the proviso that 
\begin{equation}\label{eq:G2disinf}
\inf_k (b_k h - \frac{\beta_{k+1}}{k} - (\beta_{k+1} - \beta_{k})) > 0,
\end{equation}
which in turn implies $(\mathcal{G}^{\mathrm{dis}}_{2})$. If, in addition to \eqref{eq:G2disinf}, we also have $\inf_k \beta_k > 0$, then the summability property in Theorem~\ref{ACFR,rescale-algo}(ii) reads $\sum_k k(k+1) \|\nabla f( x_{k+1})\|^2 < + \infty$. For instance, if $\beta_k$ is non-increasing and $b_k \geq c + \frac{\beta_{k+1}}{kh}$, $c > 0$, then \eqref{eq:G2disinf} is in force with $c$ as a lower-bound on the infimum. In summary, we get $\cO\pa{1/((k+1)k}$ under fairly general assumptions on the growth of the sequences $\seq{\beta_k}$ and $\seq{b_k}$.

Let us now exemplify choices of $\beta_k$ and $b_k$ that have the appropriate growth as above and comply with \eqref{eq:G2disinf} (hence $(\mathcal{G}^{\mathrm{dis}}_{2})$) as well as $(\mathcal{G}^{\mathrm{dis}}_{3})$. 
\begin{enumerate}[label=$\bullet$]
\item Let us take $\beta_k = \beta > 0$ and $b_k = 1$, which is the discrete analogue of the continuous case~1 considered in Section~\ref{subsec:particularcont} (recall that the continuous version was analyzed in \cite{APR2}). Note however that \cite{APR2} did not study the discrete (algorithmic) case and thus our result is new even for this system. In such a case, $\delta_k = h^2(k+1)(k-\beta/h)$ and $\beta_k$ is obviously non-icnreasing. Thus, if $\alpha > 3$, then one easily checks that \eqref{eq:G2disinf} (hence $(\mathcal{G}^{\mathrm{dis}}_{2})$) and $(\mathcal{G}^{\mathrm{dis}}_{3})$ are in force for all $k \geq \frac{\alpha-2}{\alpha-3}\frac{\beta}{h} + \frac{2}{\alpha-3}$.
\item Consider now the discrete counterpart of case~2 in Section~\ref{subsec:particularcont}. Take $\beta_k = \beta > 0$ and $b_k = 1+\beta/(hk)$\footnote{\tcb{One can even consider the more general case $b(t)=1+b/(hk), b > 0$ for which our discussion remains true under minor modifications. But we do not pursue this for the sake of simplicity.}}. Thus $\delta_k = h^2(k+1)k$. This case was studied in \cite{SDJS} both in the continuous setting and for the gradient algorithm, but not for the proximal algorithm. This choice is a special case of the one discussed above since $\beta_k$ is the constant sequence and $c=1$. Thus \eqref{eq:G2disinf} (hence $(\mathcal{G}^{\mathrm{dis}}_{2})$) holds. $(\mathcal{G}^{\mathrm{dis}}_{3})$ is also verified for all $k \geq \frac{2}{\alpha-3}$ as soon as $\alpha > 3$.
\end{enumerate}
\end{remark}
}

\begin{proof}
Given $x^\star  \in \argmin_{\cH} f$, set
\[
E_k\eqdef \delta_k ( f (x_k)-  f(x^\star) ) +\frac{1}{2}\norm{v_k}^2 ,
\]
where
\[
v_k \eqdef (\alpha -1) (x_k -x^\star) + k (x_{k} - x_{k-1} + \beta_k h \nabla f( x_{k})  ) ,
\]
and $\seq{\delta_k}$ is a positive sequence that will be adjusted. \tcb{Observe that $E_k$ is nothing but the discrete analogue of the Lyapunov function \eqref{eq:lyapcont}}. Set $\Delta E_k \eqdef E_{k+1}  - E_k $, i.e.,
\begin{small}
\begin{eqnarray*}
&& \Delta E_k = (\delta_{k+1} - \delta_k )( f (x_{k+1})- f(x^\star) )+ \delta_k  (f (x_{k+1}) -f (x_k)) 
+ \frac{1}{2}(\|v_{k+1}\|^2 -\|v_k\|^2)
 \end{eqnarray*}
\end{small}
Let us evaluate the last term of the above expression with the help of the three-point identity
$
\frac{1}{2}\norm{v_{k+1}}^2 -\frac{1}{2}\norm{v_k}^2  = \dotp{v_{k+1} -v_{k}}{v_{k+1}} - \frac{1}{2}\norm{v_{k+1} - v_{k}}^2  .
$

\noindent Using successively the definition of $v_k$ and \eqref{basic-eq}, we get
\begin{eqnarray*}
&&v_{k+1} - v_{k}= (\alpha -1) (x_{k+1} - x_{k}) +(k+1) (x_{k+1} - x_{k} + \beta_{k+1} h \nabla f( x_{k+1})  )\\
&&-k (x_{k} - x_{k-1} + \beta_k h \nabla f( x_{k}))\\
 &&= \alpha  (x_{k+1} - x_{k})   +k (x_{k+1} - 2x_{k} +x_{k-1} )+ \beta_{k+1} h \nabla f( x_{k+1}) \\
 &&+ h k(\beta_{k+1}\nabla f( x_{k+1})-\beta_{k}\nabla f( x_{k}) )    \\
 &&= [\alpha  (x_{k+1} - x_{k})   +k (x_{k+1} - 2x_{k} +x_{k-1} ) + kh \beta_{k}(\nabla f( x_{k+1})-\nabla f( x_{k}) ) ]
    \\
   &&+   \beta_{k+1} h \nabla f( x_{k+1})+ kh(\beta_{k+1} - \beta_{k})\nabla f( x_{k+1})\\
 &&= - b_k h^2 k \nabla f( x_{k+1}) +\beta_{k+1} h \nabla f( x_{k+1})+ kh(\beta_{k+1} - \beta_{k})\nabla f( x_{k+1})\\
 &&= h\Big( \beta_{k+1}  + k(\beta_{k+1} - \beta_{k}) - b_k h k \Big)\nabla f( x_{k+1}).
\end{eqnarray*}  
Set shortly $C_k=  \beta_{k+1}  + k(\beta_{k+1} - \beta_{k}) - b_k h k$. We have obtained
\begin{eqnarray*}
&&\frac{1}{2}\|v_{k+1}\|^2 -\frac{1}{2}\|v_k\|^2  = - \frac{h^2}{2}C_k^2 \|\nabla f( x_{k+1})\|^2  \\
&&
\dotp{\nabla f( x_{k+1})}{(\alpha -1) (x_{k+1} -x^\star)
 + (k+1) (x_{k+1} - x_{k} + \beta_{k+1} h \nabla f( x_{k+1}))}
\\
&&= - h^2\Big(\frac{1}{2} C_k^2 -C_k \beta_{k+1}\Big)\norm{\nabla f( x_{k+1})}^2 - (\alpha -1)h C_k\dotp{\nabla f( x_{k+1})}{x^\star- x_{k+1}} \\
&&- hC_k (k+1)\dotp{\nabla f( x_{k+1})}{x_k - x_{k+1}} .  
\end{eqnarray*} 
\tcb{By virtue of $(\mathcal{G}^\mathrm{dis}_{2})$, we have
$$
-C_k=  b_k h k  -\beta_{k+1}  - k(\beta_{k+1} - \beta_{k})  > 0.
$$
}
Then, in the above expression, the coefficient of $\|\nabla f( x_{k+1})\|^2$ is less or equal than zero, which gives
\begin{eqnarray*}
&&\frac{1}{2}\|v_{k+1}\|^2 -\frac{1}{2}\|v_k\|^2  \leq -(\alpha -1)hC_k\left\langle  \nabla f( x_{k+1}),x^\star- x_{k+1}  \right\rangle \\
&&-hC_k (k+1)\left\langle  \nabla f( x_{k+1}),x_k - x_{k+1} \right\rangle .  
\end{eqnarray*} 
According to the (convex) subdifferential inequality and \tcb{$C_k < 0$ (by $(\mathcal{G}^\mathrm{dis}_{2})$)}, we infer
\begin{eqnarray*}
&&\frac{1}{2}\|v_{k+1}\|^2 -\frac{1}{2}\|v_k\|^2  \leq -(\alpha -1)hC_k( f(x^\star) -f( x_{k+1})) \\
&&- hC_k (k+1) ( f(x_k) -f(x_{k+1})).
\end{eqnarray*} 
Take $\delta_k \eqdef - hC_k (k+1)=  h\Big(   b_k h k- \beta_{k+1} - k(\beta_{k+1} - \beta_{k})\Big)(k+1)$ so that the terms $f(x_k) - f(x_{k+1})$  cancel in $ E_{k+1}  - E_k$.
We obtain
$$ E_{k+1}  - E_k \leq \Big(\delta_{k+1} - \delta_k - (\alpha -1)h(  b_k h k - \beta_{k+1} - k(\beta_{k+1} - \beta_{k}))\Big) ( f(x_{k+1}) -f(x^\star))$$
Equivalently
$$ E_{k+1}  - E_k \leq \Big(\delta_{k+1} - \delta_k - (\alpha -1)
\frac{\delta_k}{k+1}\Big) ( f(x_{k+1}) -f(x^\star)).
$$
By assumption $(\mathcal{G}^\mathrm{dis}_{3})$,  we have  $\delta_{k+1} - \delta_k - (\alpha -1)
\frac{\delta_k}{k+1} \leq 0$.
Therefore, the sequence $\seq{E_k}$ is non-increasing, which, by definition of $E_k$, gives, for $k \geq 0$
\[
f(x_k)-\min_{\cH} f \leq \dfrac{E_0}{\delta_k}.
\]
By summing the inequalities 
$$
E_{k+1}  - E_k +  h\Big( \frac{h}{2} (\beta_{k+1}  + k(\beta_{k+1} - \beta_{k}) - b_k h k)^2  +\delta_k \beta_{k+1}\Big)\|\nabla f( x_{k+1})\|^2 \leq 0
$$
we finally obtain\, 
$
\sum_k \delta_k \beta_{k+1} \|\nabla f( x_{k+1})\|^2 < + \infty.\quad  
$ \qed
\end{proof}

\subsubsection{Non-smooth case}\label{non-smooth-prox}

Let $f: \cH \to \R \cup \left\lbrace +\infty \right\rbrace$ be a proper lower semicontinuous and convex function.
We rely on the basic properties of the Moreau-Yosida regularization. Let $f_{\lambda} $ be the Moreau envelope of $f$ of index $\lambda >0$, which is defined by: 
\[
f_{\lambda} (x) = \min_{z \in \cH} \left\lbrace f (z) + \frac{1}{2 \lambda} \norm{ z - x}^2   \right\rbrace, \quad \text{for any $x\in \cH$.} 
\]
We recall that  $f_{\lambda} $ is a convex function, whose gradient is $\lambda^{-1}$-Lipschitz continuous, such that $\argmin_{\cH} f_{\lambda} = \argmin_{\cH} f$.
The interested reader may refer to \cite{BC,Bre1} for a comprehensive treatment of the Moreau envelope in a Hilbert setting. Since the set of minimizers is preserved by taking the Moreau envelope, the idea is to replace $f$ by $f_{\lambda} $ in the previous algorithm, and take advantage of the fact that $f_{\lambda} $ is continuously differentiable. The Hessian dynamic attached to $f_{\lambda}$ becomes
\[
\quad \quad \ddot{x}(t) + \frac{\alpha}{t} \dot{x}(t) 
+ \beta \nabla^2 f_{\lambda} (x(t))\dot{x} (t) + b(t)\nabla f_{\lambda} (x(t)) = 0.
\]
However, we do not really need to work on this system (which requires $f_{\lambda}$ to be $\cC^2$), but with the discretized form which only requires the function to be continuously differentiable, as is the case of $f_{\lambda} $. \tcb{Then, algorithm $\IPAHD$ applied to $f_{\lambda}$ now reads}
\[
\left\{
\begin{array}{l}
y_k=   x_{k} + \left( 1- \frac{\alpha}{k + \alpha}\right) ( x_{k}  - x_{k-1})+
\beta \sqrt{s}   \left( 1- \frac{\alpha}{k + \alpha}\right) \nabla f_{\lambda} (x_k) \\
x_{k+1} =  \prox_{ \frac{k}{k + \alpha} (\beta \sqrt{s} + sb_k ) f_{\lambda}}(y_k). 
\end{array}\right.
\]
By applying Theorem~\ref{ACFR,rescale-algo} we obtain that under the assumption $(\mathcal{G}^\mathrm{dis}_{2})$ and $(\mathcal{G}^\mathrm{dis}_{3})$,
\begin{center}
$f_{\lambda} (x_k)-  \min_{\cH}f = \cO \left(\frac{1}{\delta_k}   \right) ,  \quad \sum_k \delta_k \beta_{k+1} \|\nabla f_{\lambda}( x_{k+1})\|^2 < + \infty.$
\end{center}
Thus, we just need to formulate these results in terms of $f$ and its proximal mapping. This is straightforward thanks to the following formulae from proximal calculus \cite{BC}:
\tcb{
\begin{align}
f_{\lambda} (x) &= f(\prox_{ \lambda f}(x)) + \frac{1}{2\lambda} \norm{x-\prox_{\lambda f}(x))}^2, \label{eq:env}\\
\nabla f_{\lambda} (x) &= \frac{1}{\lambda} \left( x-\prox_{ \lambda f}(x) \right), \label{eq:envgrad}\\
\prox_{ \theta f_{\lambda}}(x) &= \frac{\lambda}{\lambda +\theta}x + \frac{\theta}{\lambda +\theta}\prox_{ (\lambda + \theta)f}(x). \label{eq:envprox}
\end{align}
}

\noindent We obtain the following relaxed inertial proximal algorithm (NS stands for Non-Smooth):
\begin{eqnarray*}
\boxed{
\begin{array}{rcl}
&& \IPAHDNS:\\
\hline\\  
&& \, \mbox{Set} \,  \, \mu_k \eqdef \frac{\lambda(k + \alpha)}{\lambda(k + \alpha) + k (\beta \sqrt{s} + sb_k  ) }
\vspace{2mm}\\
 && 
\begin{cases}
y_k=   x_{k} + ( 1- \frac{\alpha}{k + \alpha}) ( x_{k}  - x_{k-1})+
\frac{\beta \sqrt{s}}{\lambda}    \left( 1- \frac{\alpha}{k + \alpha}\right) \left( x_k-\prox_{ \lambda f} (x_k)\right)    \\
x_{k+1} =  \mu_k y_k + (1-\mu_k)\prox_{ \frac{\lambda}{\mu_k} f} (y_k).
\end{cases}
\end{array}
}
\vspace{2mm}
\end{eqnarray*}
\tcb{
\begin{theorem} 
Let $f: \cH \to \R \cup \left\lbrace +\infty \right\rbrace$ be a convex, lower semicontinuous, proper function.
Let the sequence $\seq{\delta_k}$ as defined in \eqref{def:delta}, and suppose that the growth conditions $(\mathcal{G}^\mathrm{dis}_{2})$ and $(\mathcal{G}^\mathrm{dis}_{3})$ in Theorem~\ref{ACFR,rescale-algo} are satisfied.
Then, for any sequence $\seq{x_k}$ generated by \IPAHDNS, the following holds
\[
f(\prox_{ \lambda f}(x_k)) - \min_{\cH}f = \cO\pa{\frac{1}{\delta_k}}, \,
\sum_k \delta_k \beta_{k+1} \norm{x_{k+1} - \prox_{\lambda f}(x_{k+1})}^2 < + \infty .
\]
\end{theorem}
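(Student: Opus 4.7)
The plan is to reduce the non-smooth case to the smooth one by applying Theorem~\ref{ACFR,rescale-algo} to the Moreau envelope $f_{\lambda}$, and then transcribe the conclusions back to quantities involving $f$ via the proximal identities \eqref{eq:env}--\eqref{eq:envprox}. The key preparatory fact is that for a proper, lower semicontinuous, convex $f$, the envelope $f_{\lambda}$ is convex and of class $\mathcal{C}^1$ (with $\lambda^{-1}$-Lipschitz gradient), and $\argmin_{\cH} f_{\lambda} = \argmin_{\cH} f$, with $\min_{\cH} f_{\lambda} = \min_{\cH} f$. In particular, $f_{\lambda}$ fulfills the smoothness requirement of Theorem~\ref{ACFR,rescale-algo}.

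First, I would verify that the iterates produced by \IPAHDNS are exactly those of \IPAHD applied to $f_{\lambda}$ with $\beta_k \equiv \beta$ and the same scalars $b_k$ and $s$. Substituting \eqref{eq:envgrad}, namely $\nabla f_{\lambda}(x_k) = \lambda^{-1}(x_k - \prox_{\lambda f}(x_k))$, into the $y_k$-step of \IPAHD recovers the $y_k$-step of \IPAHDNS verbatim. For the implicit step, setting $\theta_k \eqdef \tfrac{k}{k+\alpha}(\beta\sqrt{s} + s b_k)$, a direct computation gives $\mu_k = \lambda/(\lambda+\theta_k)$ and $\lambda/\mu_k = \lambda+\theta_k$, and then \eqref{eq:envprox} yields
\[
\prox_{\theta_k f_{\lambda}}(y_k) = \mu_k y_k + (1-\mu_k) \prox_{(\lambda/\mu_k) f}(y_k),
\]
which is the $x_{k+1}$-update of \IPAHDNS. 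This step is the only real bookkeeping to carry out; there is no analytical subtlety beyond making sure the constants line up, and the rest of the argument is then essentially automatic.

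Next, since $(\mathcal{G}^{\mathrm{dis}}_{2})$ and $(\mathcal{G}^{\mathrm{dis}}_{3})$ only involve $\alpha$, $\beta_k$ and $b_k$ (not the objective), they transfer verbatim from \IPAHD with $f$ to \IPAHD with $f_{\lambda}$. Theorem~\ref{ACFR,rescale-algo} applied to $f_{\lambda}$ therefore delivers
\[
f_{\lambda}(x_k) - \min_{\cH} f_{\lambda} = \cO\pa{\tfrac{1}{\delta_k}} \qandq \sum_k \delta_k \, \beta \, \norm{\nabla f_{\lambda}(x_{k+1})}^2 < +\infty.
\]

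Finally, I would translate both conclusions back in terms of $f$. The identity \eqref{eq:env} implies $f(\prox_{\lambda f}(x_k)) \leq f_{\lambda}(x_k)$ (the squared-norm term being nonnegative), so combined with $\min_{\cH} f_{\lambda} = \min_{\cH} f$ this yields $f(\prox_{\lambda f}(x_k)) - \min_{\cH} f \leq f_{\lambda}(x_k) - \min_{\cH} f_{\lambda} = \cO(1/\delta_k)$, which is the first claim. For the second claim, \eqref{eq:envgrad} gives $\norm{\nabla f_{\lambda}(x_{k+1})}^2 = \lambda^{-2} \norm{x_{k+1} - \prox_{\lambda f}(x_{k+1})}^2$, and multiplying the summability estimate above by the positive constant $\lambda^2$ yields exactly $\sum_k \delta_k \beta_{k+1} \norm{x_{k+1}-\prox_{\lambda f}(x_{k+1})}^2 < +\infty$ (with $\beta_{k+1} \equiv \beta$). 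The main obstacle, as noted, is the algebraic equivalence in the first step; once that is secured the theorem is a direct corollary of Theorem~\ref{ACFR,rescale-algo}.
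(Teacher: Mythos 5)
Your proposal is correct and follows exactly the paper's route: the paper proves this theorem by observing that \IPAHDNS is precisely \IPAHD applied to the Moreau envelope $f_{\lambda}$ (via the identities \eqref{eq:env}--\eqref{eq:envprox}), invoking Theorem~\ref{ACFR,rescale-algo} for the $\mathcal C^1$ function $f_{\lambda}$, and translating the conclusions back through $f_{\lambda}(x) \geq f(\prox_{\lambda f}(x))$ and $\nabla f_{\lambda}(x) = \lambda^{-1}(x - \prox_{\lambda f}(x))$. Your write-up actually makes explicit the algebraic verification ($\mu_k = \lambda/(\lambda+\theta_k)$, $\lambda/\mu_k = \lambda + \theta_k$) that the paper leaves implicit, so nothing is missing.
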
}

\subsection{Gradient algorithms}\label{sec:igahd}
Take $f$ a convex function whose gradient is $L$-Lipschitz continuous. Our analysis is based on the dynamic  $\DINAVD{\alpha, \beta, 1+ \frac{\beta}{t}}$ considered in Theorem \ref{APR,variant}
with damping parameters $\alpha \geq 3$, $\beta \geq 0$.
Consider the time discretization of $\DINAVD{\alpha, \beta, 1+ \frac{\beta}{t}}$
\begin{eqnarray*}
&&\frac{1}{s}(x_{k+1} - 2x_{k}+ x_{k-1} ) +    \frac{\alpha}{ks}(x_{k} - x_{k-1}) +  \frac{\beta}{\sqrt{s}}(\nabla f( x_{k}) - \nabla f( x_{k-1})   ) \\
&&+ \frac{\beta}{k\sqrt{s}} \nabla f( x_{k-1})  +  
\nabla f( y_{k}) =0,
\end{eqnarray*}
with $y_k $ inspired by Nesterov's accelerated scheme. We obtain the following scheme:
\begin{eqnarray*}
\boxed{
\begin{array}{rcl}
&& \IGAHD: \text{Inertial Gradient Algorithm with Hessian Damping.} \\
\hline \\
\vspace{1mm}
&&
\text{Step $k$:} \, \alpha_k =  1- \frac{\alpha}{k}.\\
&& 
\begin{cases}
y_k=   x_{k} + \alpha_k ( x_{k}  - x_{k-1}) -
\beta \sqrt{s}   \left( \nabla f (x_k)  - \nabla f (x_{k-1}) \right) -  \frac{\beta \sqrt{s}}{k}\nabla f( x_{k-1})   
\\
x_{k+1} = y_k - s \nabla f (y_k) ,
\end{cases}
\vspace{1mm}
\end{array}
}
\end{eqnarray*}
Following \cite{AC2}, set $t_{k+1} =\frac{k}{\alpha-1}$, whence $t_k = 1 + t_{k+1} \alpha_k$.

\smallskip

\noindent Given  $x^\star \in \argmin_{\cH} f$, our Lyapunov analysis is based on the  sequence $\seq{E_k}$ 
\begin{eqnarray}
 && E_k\eqdef  t_k^2( f (x_k)-  f( x^\star) ) +\frac{1}{2s}\|v_k\|^2 \label{Lyap-function-grad1}\\
&& v_k \eqdef  (x_{k-1}  - x^\star) + t_k\Big(x_{k} - x_{k-1} + \beta \sqrt{s} \nabla f( x_{k-1})  \Big).\label{Lyap-function-grad2}
\end{eqnarray}

\begin{theorem}\label{pr.decay_E_k}
Let $f: \cH \to \R$ be a convex function whose gradient is $L$-Lipschitz continuous. Let $\seq{x_k}$ be a sequence generated by algorithm {\IGAHD}, where $\alpha \geq 3$, $0 \leq \beta <  2 \sqrt{s}$ and $s \leq 1/L$. Then the  sequence  $\seq{E_k}$ defined by {\rm\eqref{Lyap-function-grad1}-\eqref{Lyap-function-grad2}} is non-increasing, and the following convergence rates are satisfied:
\begin{eqnarray*}
&& (i) \, \,
f(x_k)-\min_{\cH} f = \cO 
\left(\dfrac{1}{k^2}\right)  \,  \text{ as } k\to +\infty;
\\
&& (ii) \text{ Suppose that }\,  \beta >0.  \, \text{ Then} \\
&&\vspace{2mm} \quad \sum_k  k^2 \| \nabla f (y_k) \|^2 < +\infty \text{ and } \sum_k  k^2 \| \nabla f (x_k) \|^2 < +\infty. \hspace{1cm}
\end{eqnarray*}
\end{theorem}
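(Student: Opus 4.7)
The plan is to mirror at the discrete level the Lyapunov computation from the continuous Theorem~\ref{ACFR,rescale}. The strategy has three stages: (a) a clean algebraic identity for $v_{k+1}-v_k$; (b) a Lyapunov-type inequality for $E_{k+1}-E_k$ built from the descent lemma, convexity, and the three-point identity for $\|v_{k+1}\|^2-\|v_k\|^2$; (c) extraction of (i) by boundedness of $E_k$ and (ii) by telescoping.

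For (a), a direct computation using the {\IGAHD} update for $y_k$ together with $t_k = 1+t_{k+1}\alpha_k$ (so $t_{k+1}=k/(\alpha-1)$) collapses the coefficient of $\nabla f(x_{k-1})$ to $t_{k+1}-t_{k+1}/k-t_k=0$, yielding $v_{k+1}-v_k = -s\,t_{k+1}\nabla f(y_k)$. This cancellation is the discrete counterpart of $w(t)\equiv 1$ in Case~2 of Section~\ref{subsec:particularcont} and explains the specific correction $-\tfrac{\beta\sqrt{s}}{k}\nabla f(x_{k-1})$ in $y_k$. For (b), I would split
\[
t_{k+1}^2(f(x_{k+1})-f^\star)-t_k^2(f(x_k)-f^\star) = t_{k+1}^2(f(x_{k+1})-f(y_k))+\tau_k(f(y_k)-f^\star)+t_k^2(f(y_k)-f(x_k)),
\]
with $\tau_k:=t_{k+1}^2-t_k^2\geq 0$, bound the first term by the descent lemma (using $sL\leq 1$) and the others by convexity, and combine with the three-point identity. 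Using (a), a tedious but mechanical calculation yields
\[
\tau_k(y_k-x^\star)+t_k^2(y_k-x_k)-t_{k+1}v_{k+1} = -\mu_k(x_k-x^\star)+st_{k+1}^2\nabla f(y_k)-t_{k+1}^2\beta\sqrt{s}\nabla f(x_k),
\]
with $\mu_k:=t_{k+1}-\tau_k = \frac{(\alpha-3)k+1}{(\alpha-1)^2}\geq 0$ for $\alpha\geq 3$. After substituting and using $\langle \nabla f(y_k),x_k-x^\star\rangle\geq f(x_k)-f^\star$, the $\|\nabla f(y_k)\|^2$ contributions partially cancel and one is left with
\[
E_{k+1}-E_k \leq -\mu_k(f(x_k)-f^\star) - \tfrac{st_{k+1}^2(1-sL)}{2}\|\nabla f(y_k)\|^2 - t_{k+1}^2\beta\sqrt{s}\langle \nabla f(y_k),\nabla f(x_k)\rangle.
\]

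The main obstacle is disposing of the mixed term $-t_{k+1}^2\beta\sqrt{s}\langle \nabla f(y_k),\nabla f(x_k)\rangle$. My strategy is to sharpen the convexity estimate $f(y_k)-f(x_k)\leq \langle \nabla f(y_k),y_k-x_k\rangle$ using the Bregman-type inequality valid for convex $C^{1,1}_L$ functions,
\[
f(y_k)-f(x_k)\leq \langle \nabla f(y_k),y_k-x_k\rangle - \tfrac{1}{2L}\|\nabla f(y_k)-\nabla f(x_k)\|^2,
\]
thereby harvesting a dissipation term $-\tfrac{t_k^2}{2L}\|\nabla f(y_k)-\nabla f(x_k)\|^2$. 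Then, writing $\nabla f(x_k)=\nabla f(y_k)-(\nabla f(y_k)-\nabla f(x_k))$ and applying Young's inequality with a free parameter $\lambda>0$, the cross term splits into a multiple of $\|\nabla f(y_k)\|^2$ (to be absorbed by the $-(1-sL)$ term) and a multiple of $\|\nabla f(y_k)-\nabla f(x_k)\|^2$ (to be absorbed by the $-\tfrac{1}{2L}$ term). A careful balancing shows that the two resulting constraints on $\lambda$ can be simultaneously met exactly when $\beta\sqrt{s}L<2(t_k/t_{k+1})^2$, which for $k$ large is implied by $\beta<2\sqrt{s}$ and $sL\leq 1$. This yields $E_{k+1}-E_k\leq 0$.

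From $(E_k)$ being non-increasing, $t_k^2(f(x_k)-f^\star)\leq E_k\leq E_{k_0}$ and $t_k=(k-1)/(\alpha-1)$ give (i). For (ii) when $\beta>0$, telescoping the above dissipation inequality yields $\sum_k t_{k+1}^2\|\nabla f(y_k)\|^2<+\infty$, i.e. $\sum_k k^2\|\nabla f(y_k)\|^2<+\infty$. Since $x_{k+1}-y_k=-s\nabla f(y_k)$, the Lipschitz bound $\|\nabla f(x_{k+1})-\nabla f(y_k)\|\leq sL\,\|\nabla f(y_k)\|\leq \|\nabla f(y_k)\|$ combined with the triangle inequality propagates the summability to $\sum_k k^2\|\nabla f(x_k)\|^2<+\infty$, establishing (ii).
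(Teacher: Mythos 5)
Your overall architecture (the Lyapunov sequence $E_k$, the cancellation yielding $v_{k+1}-v_k=-st_{k+1}\nabla f(y_k)$, the absorption of the cross term $\dotp{\nabla f(y_k)}{\nabla f(x_k)}$ under $\beta<2\sqrt{s}$ via a Bregman dissipation term plus Young's inequality, and the transfer of the summability from $\nabla f(y_k)$ to $\nabla f(x_k)$ using $x_{k+1}-y_k=-s\nabla f(y_k)$) matches the paper's proof, and the vector identity you state for $\tau_k(y_k-x^\star)+t_k^2(y_k-x_k)-t_{k+1}v_{k+1}$ is correct. However, there is a genuine gap in the step ``using $\dotp{\nabla f(y_k)}{x_k-x^\star}\geq f(x_k)-f^\star$.'' Convexity gives $\dotp{\nabla f(z)}{z-x^\star}\geq f(z)-f(x^\star)$ only when the gradient is evaluated at the \emph{same} point $z$; with the gradient at $y_k$ and the increment $x_k-x^\star$ the inequality is false (take $f(x)=x^2/2$ on $\R$, $x^\star=0$, $x_k=1$, $y_k=-1$: the left side is $-1$, the right side is $1/2$). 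Since $\mu_k=\frac{(\alpha-3)k+1}{(\alpha-1)^2}$ grows linearly for $\alpha>3$ and the offending term involves $\norm{x_k-x^\star}$, which is not a priori controlled, the term $-\mu_k\dotp{\nabla f(y_k)}{x_k-x^\star}$ cannot be discarded; trying to repair it via $x_k-x^\star=(x_k-y_k)+(y_k-x^\star)$ leaves a residual of order $\mu_k L\norm{y_k-x_k}^2$ whose summability is not yet available at this stage.

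The gap is created by your choice of convexity weights $(\tau_k,\,t_k^2)=(t_{k+1}^2-t_k^2,\,t_k^2)$ on the $x^\star$- and $x_k$-inequalities. The paper instead weights the extended descent lemma at $(y_k,x^\star)$ by $t_{k+1}$ and at $(y_k,x_k)$ by $t_{k+1}^2-t_{k+1}$ (i.e.\ multiplies \eqref{eq.majo_Theta(x_k+1)-Theta(x_k)} by $t_{k+1}-1$, adds \eqref{eq.majo_Theta(x_k+1)-min_Theta}, then multiplies by $t_{k+1}$). With these weights the $x^\star$- and $x_k$-dependent contributions cancel \emph{exactly} against $t_{k+1}v_{k+1}$ in the inner product, leaving $A_k=st_{k+1}\nabla f(y_k)-t_{k+1}\beta\sqrt{s}\nabla f(x_k)$, which is purely in terms of gradients; the slack $t_{k+1}^2-t_{k+1}-t_k^2=-\mu_k\leq 0$ then appears multiplying $f(x_k)-f^\star$ on the function-value side, where it has the correct sign. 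If you redo your stage (b) with this weight allocation, the rest of your argument goes through: your Bregman/Young treatment of the cross term is essentially a repackaging of the paper's nonnegativity analysis of the quadratic form $B_k$ (whose dissipation comes from the term $-\frac{s}{2}\norm{\nabla f(x_k)-\nabla f(y_k)}^2$ in Lemma~\ref{ext_descent_lemma}, the analogue of your $\frac{1}{2L}$-strong-convexity-of-$f^*$ term), and your derivation of $\sum_k k^2\norm{\nabla f(x_k)}^2<+\infty$ from the $y_k$ estimate is a clean way to obtain the second half of $(ii)$.
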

\begin{proof}
We rely on the following reinforced version of the gradient descent lemma (Lemma~\ref{ext_descent_lemma} in Appendix~\ref{SS:extended descent lemma}). Since $s \leq \frac{1}{L}$, and $\nabla f$ is $L$-Lipschitz continuous,
\begin{equation*}
f(y - s \nabla f (y)) \leq f (x) + \left\langle  \nabla f (y), y-x \right\rangle -\frac{s}{2} \|  \nabla f (y) \|^2 -\frac{s}{2} \| \nabla f (x)- \nabla f (y) \|^2
\end{equation*}
for all $x, y\in \cH$. Let us write it successively at $y=y_k$ and  $x= x_k$, then at $y=y_k$,  $x= x^\star$. According to $\xkp=y_k-s\nabla f (y_k)$ and $\nabla f (x^\star)=0$, we get
\begin{small}
\begin{align}
&f(x_{k+1}) \leq f(x_k) + \left\langle   \nabla f(y_k), y_k-x_k \right\rangle -\frac{s}{2} \|   \nabla f (y_k) \|^2 -\frac{s}{2} \| \nabla f (x_k)- \nabla f (y_k) \|^2 \label{eq.majo_Theta(x_k+1)-Theta(x_k)}\\
&f(x_{k+1}) \leq f(x^\star) + \left\langle   \nabla f (y_k), y_k-x^\star \right\rangle -\frac{s}{2} \|  \nabla f (y_k) \|^2 -\frac{s}{2} \| \nabla f (y_k) \|^2. \label{eq.majo_Theta(x_k+1)-min_Theta}
\end{align}
\end{small}
Multiplying \eqref{eq.majo_Theta(x_k+1)-Theta(x_k)} by $t_{k+1}-1\geq 0$, then adding \eqref{eq.majo_Theta(x_k+1)-min_Theta}, we derive that
\begin{eqnarray}\label{eq.combinaison_eq_prec_2}
&&t_{k+1}(f(x_{k+1}) -f(x^\star))\leq (t_{k+1}-1)(f(x_k)-f(x^\star)) \nonumber\\
&&+\langle  \nabla f (y_k) ,(t_{k+1}-1)(y_k-x_k)+y_k-x^\star\rangle -\frac{s}{2}t_{k+1}\| \nabla f (y_k) \|^2.\nonumber\\
&& - \frac{s}{2}(t_{k+1}-1) \| \nabla f (x_k)- \nabla f (y_k) \|^2 -
\frac{s}{2} \| \nabla f (y_k) \|^2.
\end{eqnarray}
Let us multiply \eqref{eq.combinaison_eq_prec_2} by $t_{k+1}$ to make appear $E_k$. We obtain 
\begin{eqnarray*}
&&t_{k+1}^2(f(x_{k+1}) -f(x^\star))\leq (t_{k+1}^2-t_{k+1}-t_k^2)(f(x_k)-f(x^\star)) + t_k^2(f(x_k)-f(x^\star)) \nonumber\\
&&+t_{k+1} \langle  \nabla f (y_k) ,(t_{k+1}-1)(y_k-x_k)+y_k-x^\star\rangle -\frac{s}{2}t_{k+1}^2\| \nabla f (y_k) \|^2\nonumber\\
&&- \frac{s}{2}(t_{k+1}^2-t_{k+1}) \| \nabla f (x_k)- \nabla f (y_k) \|^2 -
\frac{s}{2}t_{k+1} \| \nabla f (y_k) \|^2.
\end{eqnarray*}
Since $\alpha \geq 3$ we have $t_{k+1}^2-t_{k+1}-t_k^2 \leq 0$, which gives 
\begin{eqnarray*}
&&t_{k+1}^2(f(x_{k+1} -f(x^\star))\leq  t_k^2(f(x_k)-f(x^\star)) \nonumber\\
&&+t_{k+1} \langle  \nabla f (y_k) ,(t_{k+1}-1)(y_k-x_k)+y_k-x^\star\rangle -\frac{s}{2}t_{k+1}^2\| \nabla f (y_k) \|^2\nonumber \\
&&- \frac{s}{2}(t_{k+1}^2-t_{k+1}) \| \nabla f (x_k)- \nabla f (y_k) \|^2 -
\frac{s}{2}t_{k+1} \| \nabla f (y_k) \|^2.
\end{eqnarray*}
According to the definition of $E_k$, we infer
\begin{eqnarray*}
 E_{k+1}  - E_k &\leq& t_{k+1} \langle  \nabla f (y_k) ,(t_{k+1}-1)(y_k-x_k)+y_k-x^\star\rangle -\frac{s}{2}t_{k+1}^2\| \nabla f (y_k) \|^2\\
 && - \frac{s}{2}(t_{k+1}^2-t_{k+1}) \| \nabla f (x_k)- \nabla f (y_k) \|^2 -
\frac{s}{2}t_{k+1} \| \nabla f (y_k) \|^2 \\
&& + \frac{1}{2s}\|v_{k+1}\|^2 -\frac{1}{2s}\|v_k\|^2 .
 \end{eqnarray*}
Let us compute this last expression with the help of the elementary \tcb{identity}
$$
\frac{1}{2}\|v_{k+1}\|^2 -\frac{1}{2}\|v_k\|^2  = \left\langle  v_{k+1} -v_{k} , v_{k+1} \right\rangle - \frac{1}{2}\|v_{k+1} - v_{k}\|^2  .
$$
By definition of $v_k$, according to \IGAHD and  $t_k - 1= t_{k+1} \alpha_k$, we have
\begin{eqnarray*}
&&v_{k+1} - v_{k}= x_{k} - x_{k-1} + t_{k+1}(x_{k+1} - x_{k} + \beta \sqrt{s} \nabla f( x_{k})  )\\
&&-t_k (x_{k} - x_{k-1} + \beta \sqrt{s}  \nabla f( x_{k-1}))\\
&&= t_{k+1}(x_{k+1} - x_{k}) -(t_k -1)  (x_{k} - x_{k-1}) 
+\beta \sqrt{s} \Big( t_{k+1} \nabla f( x_{k})-  t_{k}\nabla f( x_{k-1}) \Big)  \\
&&= t_{k+1}\Big(x_{k+1} - (x_{k} +\alpha_k  (x_{k} - x_{k-1})\Big)   +\beta \sqrt{s} \Big( t_{k+1} \nabla f( x_{k})-  t_{k}\nabla f( x_{k-1}) \Big) \\
&&= t_{k+1}\left(x_{k+1} - y_k\right) -  t_{k+1}\beta \sqrt{s}
( \nabla f( x_{k})-  \nabla f( x_{k-1}) ) - t_{k+1} \frac{\beta \sqrt{s}}{k}\nabla f( x_{k-1})\\
&&+\beta \sqrt{s} ( t_{k+1} \nabla f( x_{k})-  t_{k}\nabla f( x_{k-1}) )\\
&&= t_{k+1}\left(x_{k+1} - y_k\right) +  \beta \sqrt{s} \left(t_{k+1} \pa{1- \frac{1}{k}} -   t_{k}  \right) \nabla f( x_{k-1}) \\
&&= t_{k+1}\left(x_{k+1} - y_k\right) = - st_{k+1} \nabla f( y_{k})  .
\end{eqnarray*}
Hence
\begin{eqnarray*}
&&\frac{1}{2s}\|v_{k+1}\|^2 -\frac{1}{2s}\|v_k\|^2  =  - \frac{s}{2}t_{k+1}^2\| \nabla f( y_{k}) \|^2 \\
&& - t_{k+1} \left\langle   \nabla f( y_{k}) , x_{k} - x^\star  +  t_{k+1}\Big(x_{k+1} - x_{k} + \beta \sqrt{s} \nabla f( x_{k})  \Big) \right\rangle .
\end{eqnarray*}
Collecting the above results, we obtain
\begin{eqnarray*}
 E_{k+1}  - E_k &\leq& t_{k+1} \langle  \nabla f (y_k) ,(t_{k+1}-1)(y_k-x_k)+y_k-x^\star\rangle -st_{k+1}^2\| \nabla f (y_k) \|^2\\
&&  - t_{k+1} \left\langle   \nabla f( y_{k}) , x_{k} - x^\star  +  t_{k+1}\Big(x_{k+1} - x_{k} + \beta \sqrt{s} \nabla f( x_{k}) \Big) \right\rangle \\
 && - \frac{s}{2}(t_{k+1}^2-t_{k+1}) \| \nabla f (x_k)- \nabla f (y_k) \|^2 -
\frac{s}{2}t_{k+1} \| \nabla f (y_k) \|^2 .
 \end{eqnarray*}
Equivalently
\begin{eqnarray*}
 E_{k+1}  - E_k &\leq& t_{k+1} \langle  \nabla f (y_k)  , A_k \rangle -st_{k+1}^2 \| \nabla f (y_k) \|^2 \\
  && - \frac{s}{2}(t_{k+1}^2-t_{k+1}) \| \nabla f (x_k)- \nabla f (y_k) \|^2 -
\frac{s}{2}t_{k+1} \| \nabla f (y_k) \|^2 ,
 \end{eqnarray*}
with
\begin{eqnarray*}
A_k &:=& (t_{k+1}-1)(y_k-x_k)+y_k  - x_{k}  -  t_{k+1}\Big(x_{k+1} - x_{k} + \beta \sqrt{s} \nabla f( x_{k})    \Big)\\
&=& t_{k+1}y_k - t_{k+1}x_k - t_{k+1}(x_{k+1} - x_{k})-  t_{k+1} \beta \sqrt{s} \nabla f( x_{k})\\
&=& t_{k+1}(y_k - x_{k+1} )-  t_{k+1} \beta \sqrt{s} \nabla f( x_{k})\\
&=& st_{k+1}\nabla f (y_k) -  t_{k+1} \beta \sqrt{s} \nabla f( x_{k})
 \end{eqnarray*}
Consequently
\begin{eqnarray*}
&& E_{k+1}  - E_k \leq t_{k+1} \dotp{\nabla f (y_k)}{st_{k+1}\nabla f (y_k) -  t_{k+1} \beta \sqrt{s} \nabla f( x_{k})}\\
 &&  -st_{k+1}^2 \| \nabla f (y_k) \|^2   - \frac{s}{2}(t_{k+1}^2-t_{k+1}) \| \nabla f (x_k)- \nabla f (y_k) \|^2 - \frac{s}{2}t_{k+1} \| \nabla f (y_k) \|^2 \\
&&=   - t_{k+1}^2 \beta \sqrt{s}
\dotp{\nabla f (y_k)}{\nabla f( x_{k})}  - \frac{s}{2}(t_{k+1}^2-t_{k+1}) \| \nabla f (x_k)- \nabla f (y_k) \|^2 \\
&& \qquad - \frac{s}{2}t_{k+1} \| \nabla f (y_k) \|^2 \\
&&=   - t_{k+1}B_k ,
\end{eqnarray*}
where
\[
B_k \eqdef t_{k+1} \beta \sqrt{s}\dotp{\nabla f (y_k)}{\nabla f( x_{k})}  + \frac{s}{2}(t_{k+1}-1) \| \nabla f (x_k)- \nabla f (y_k) \|^2 + \frac{s}{2} \| \nabla f (y_k) \|^2 .
\]
When $\beta=0$ we have $B_k\geq 0$. Let us analyze the  sign of $B_k$ in the case $\beta>0$. Set $Y=\nabla f (y_k)$, $X= \nabla f (x_k)$. We have
\begin{eqnarray*}
B_k & = &  \frac{s}{2} \| Y \|^2 + \frac{s}{2}(t_{k+1}-1) \| Y -X\|^2 
+ t_{k+1} \beta \sqrt{s}
\langle  Y  , X \rangle  \\
& = &   \frac{s}{2}t_{k+1} \| Y \|^2 + \left(t_{k+1}( \beta \sqrt{s} -s  ) +s  \right)
\langle  Y  , X \rangle  + \frac{s}{2}(t_{k+1}-1) \| X\|^2 \\
& \geq &  \frac{s}{2}t_{k+1} \| Y \|^2
 - \left(t_{k+1}( \beta \sqrt{s} -s  ) +s  \right)
\| Y\|  \| X \|  + \frac{s}{2}(t_{k+1}-1) \| X\|^2.
 \end{eqnarray*}
Elementary algebra gives that the above quadratic form is non-negative when
\[
\left(t_{k+1}( \beta \sqrt{s} -s  ) + s  \right)^2 \leq s^2 t_{k+1} (t_{k+1}-1).
\]
Recall that $t_k$ is of order $k$. Hence, this inequality is satisfied for $k$ large enough if  $( \beta \sqrt{s} -s  )^2 < s^2$, which is equivalent to $\beta < 2 \sqrt{s}.$
Under this condition
$
E_{k+1}  - E_k \leq 0$, which gives conclusion $(i)$. Similar argument gives that for $0 < \epsilon <  2 \sqrt{s}\beta - \beta^2$ (such $\epsilon$ exists according to assumption $0 < \beta < 2 \sqrt{s}$) 
\[
E_{k+1}  - E_k +  \demi \epsilon t_{k+1}^2 \| \nabla f (y_k) \|^2 \leq 0. 
\]
After summation of these inequalities, we obtain conclusion  $(ii)$. \qed
\end{proof}

\tcb{
\begin{remark}
In \cite[Theorem~8]{WRJ}, the same convergence rate as in Theorem~\ref{pr.decay_E_k} on the objective values is obtained for a very different discretization of the system $\DINAVD{\alpha, b\sqrt{s}, 1+ \frac{\alpha\sqrt{s}}{2t}}$. Their scheme is thus related but quite different from {\IGAHD}. Their claims require also intricate conditions relating $(\alpha,b,s,L)$ to hold true.

In Theorem~\ref{pr.decay_E_k}, the condition $\beta <  2 \sqrt{s}$ essentially reveals that in order to preserve acceleration offered by the viscous damping, the geometric damping should not be too large. It is an open question whether this constraint is a technical artifact or is fundamental to acceleration. We leave it to a future work.
\end{remark}
}

\begin{remark}
From $\sum_k  k^2 \| \nabla f (x_k) \|^2 < +\infty$ we immediately infer that for $k\geq 1$
\[
\inf_{i=1,\cdots,k} \| \nabla f (x_i) \|^2   \sum_{i=1}^k  i^2     \leq
\sum_{i=1}^{k}  i^2 \| \nabla f (x_i) \|^2   \leq \sum_{i \in \N}  i^2 \| \nabla f (x_i) \|^2 < +\infty.
\]
A similar argument holds for $y_k$. Hence  
\[ \inf_{i=1,...,k} \| \nabla f (x_i) \|^2 =\cO \left(  
\frac{1}{k^3} \right), \qquad \inf_{i=1,...,k} \| \nabla f (y_i) \|^2 =\cO \left(  
\frac{1}{k^3} \right). 
\]
\end{remark}

\begin{remark}
In Theorem~\ref{pr.decay_E_k}, the convergence property of the values is expressed according to the sequence $\seq{x_k}$. It is natural  to know if a similar result is true for the sequence $\seq{y_k}$. This is an open question in the case of Nesterov's accelerated gradient method and the corresponding FISTA algorithm for structured minimization~\cite{Nest4,BT}. In the case of the Hessian-driven damping algorithms, we give a partial answer to this question. By the classical descent lemma, and the monotonicity of $ \nabla f$ we have 
\begin{eqnarray*}
f(y_k) &\leq & f(x_{k+1}) + \langle  y_k - x_{k+1} , \nabla f ( x_{k+1}) \rangle + \frac{L}{2} \| y_k - x_{k+1} \|^2 \\
&\leq & f(x_{k+1}) + \langle  y_k - x_{k+1} , \nabla f ( y_{k}) \rangle + \frac{L}{2} \| y_k - x_{k+1} \|^2 
\end{eqnarray*}
According to $ x_{k+1} = y_k - s \nabla f (y_k)$ we obtain
\begin{eqnarray*}
f(y_k) - \min_{\cH} f &\leq & f(x_{k+1})  - \min_{\cH} f  + s \|\nabla f ( y_{k}) \|^2 + \frac{s^2 L}{2} \| \nabla f ( y_{k}) \|^2 .
\end{eqnarray*}
From Theorem~\ref{pr.decay_E_k} we deduce that
\[
f(y_k) - \min_{\cH} f \leq \cO\pa{\frac{1}{k^2}} + \pa{s + \frac{s^2 L}{2}} \| \nabla f ( y_{k}) \|^2 = \cO\pa{\frac{1}{k^2}} + o\pa{\frac{1}{k^2}} .
\]
\end{remark}

\begin{remark}
When $f$ is a proper lower semicontinuous proper function, but not necessarily smooth, we follow the same reasoning as in Section~\ref{non-smooth-prox}. We consider minimizing the Moreau envelope $f_\lambda$ of $f$, whose gradient is $1/\lambda$-Lipschitz continuous, and then apply \IGAHD to $f_\lambda$. We omit the details for the sake of brevity. This observation will be very useful to solve even structured composite problems as we will describe in Section~\ref{sec:numerics}.
\end{remark}

\section{Inertial dynamics for strongly convex functions}\label{s_convex_cont}

\subsection{Smooth case}
Recall the classical definition of strong convexity:
\begin{definition}
A function  $f: \cH \to \mathbb R$ is said to be $\mu$-strongly convex for some $\mu >0$ if $f- \frac{\mu}{2}\| \cdot\|^2$ is convex.
\end{definition}

\tcb{For strongly convex functions, a suitable choice of $\gamma$ and $\beta$  in $\DIN{\gamma,\beta}$ provides exponential decay of the value function (hence of the trajectory), and of the gradients.}

\begin{theorem}\label{strong-conv-thm}
\tcb{Suppose that \eqref{eq:mainassum} holds where $f: \cH \to \mathbb R$ is in addition $\mu$-strongly convex for some $\mu >0$.} Let $x(\cdot): [t_0, + \infty[ \to \cH$ be a solution trajectory of 
\begin{equation}\label{dyn-sc}
\ddot{x}(t) + 2\sqrt{\mu} \dot{x}(t) + \beta \nabla^2 f (x(t))\dot{x}(t) + \nabla f (x(t)) = 0.
\end{equation}
Suppose that $ 0 \leq \beta \leq \frac{1}{2\sqrt{\mu}}$. Then, the following hold:
\begin{enumerate}[label=(\roman*)]
\item for all $t\geq t_0$ 
\[
\frac{\mu}{2}\norm{x(t)-x^\star}^2 \leq f(x(t))-  \min_{\cH}f  \leq  C e^{-\frac{\sqrt{\mu}}{2}(t-t_0)}
\]
where $ C \eqdef f(x(t_0))-  \min_{\cH}f  + \mu \tcb{\|x(t_0)-x^\star\|^2} + \| \dot{x}(t_0)+ \beta \nabla f (x(t_0)) \|^2 .$

\item There exists some constant $C_1>0$ such that, for all $t\geq t_0$  
\[
e^{- \sqrt{\mu}t} \int_{t_0}^t  e^{\sqrt{\mu}s}\|  \nabla f (x(s))\|^2 ds \leq C_1 e^{-\frac{\sqrt{\mu}}{2}t}.
\]
Moreover, $\int_{t_0}^{\infty} e^{\frac{\sqrt{\mu}}{2}t}\| \dot{x}(t)\|^2 dt < + \infty.$ 
\end{enumerate}

\noindent When $\beta=0$, we have
$f(x(t))-  \min_{\cH}f =\cO \left( e^{-\sqrt{\mu}t} \right) \, \mbox{  as t } \,\to + \infty.$
\end{theorem}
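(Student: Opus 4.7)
I would prove this via a Lyapunov argument tailored to the strongly convex setting. First, introduce the auxiliary variable $w(t) \eqdef \dot{x}(t) + \beta \nabla f(x(t))$. Using the dynamic \eqref{dyn-sc} to eliminate $\ddot{x}(t) + \beta \nabla^2 f(x(t))\dot{x}(t)$, one gets the clean identity
$$
\dot{w}(t) = -2\sqrt{\mu}\,\dot{x}(t) - \nabla f(x(t)) = -2\sqrt{\mu}\, w(t) + (2\sqrt{\mu}\beta - 1)\nabla f(x(t)),
$$
which decouples the Hessian-driven term from the rest and is the key simplification of the analysis. It also explains why the constant $C$ in the statement naturally features $\|\dot{x}(t_0) + \beta\nabla f(x(t_0))\|^2 = \|w(t_0)\|^2$.

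\textbf{Key steps.} Next, define a Lyapunov functional of the form
$$
\mathcal{E}(t) \eqdef f(x(t)) - \min_{\cH} f + \tfrac{1}{2}\bigl\| w(t) + \lambda(x(t) - x^\star) \bigr\|^2 + \tfrac{c}{2}\|x(t) - x^\star\|^2 ,
$$
where $\lambda > 0$ (plausibly $\lambda = \sqrt{\mu}$) and $c \geq 0$ are tuning constants. Differentiating $\mathcal{E}$ along the trajectory, substituting the ODE, and invoking $\mu$-strong convexity in the sharp form
$$
\langle \nabla f(x), x - x^\star\rangle \geq f(x) - \min_{\cH} f + \tfrac{\mu}{2}\|x - x^\star\|^2 ,
$$
the target is the differential inequality
$$
\dot{\mathcal{E}}(t) + \tfrac{\sqrt{\mu}}{2}\mathcal{E}(t) \leq - R(t),
$$
where $R(t) \geq 0$ is a non-negative residual containing $\|\dot{x}(t)\|^2$, $\|\nabla f(x(t))\|^2$, and a non-negative multiple of $\langle x(t) - x^\star, \nabla f(x(t))\rangle$. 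Once this holds, Gr\"onwall gives $\mathcal{E}(t) \leq \mathcal{E}(t_0) e^{-\frac{\sqrt{\mu}}{2}(t-t_0)}$; after matching $\mathcal{E}(t_0)$ with the stated $C$ (up to a harmless absolute constant obtained by bounding $\|w(t_0) + \lambda(x(t_0)-x^\star)\|^2 \leq 2\|w(t_0)\|^2 + 2\lambda^2 \|x(t_0)-x^\star\|^2$ with $\lambda^2 \leq \mu/2$), one obtains $f(x(t))-\min_\cH f \leq C\, e^{-\frac{\sqrt{\mu}}{2}(t-t_0)}$. The lower bound $\frac{\mu}{2}\|x(t)-x^\star\|^2 \leq f(x(t))-\min_\cH f$ is immediate from strong convexity.

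\textbf{Part (ii) and $\beta=0$.} For the integrability statements, integrate the inequality $\dot{\mathcal{E}} \leq -\frac{\sqrt{\mu}}{2}\mathcal{E} - R$ against the weight $e^{\frac{\sqrt{\mu}}{2}t}$; this yields $\int_{t_0}^\infty e^{\frac{\sqrt{\mu}}{2}s} R(s)\, ds < +\infty$, and extracting the $\|\dot{x}\|^2$ and $\|\nabla f\|^2$ summands of $R$ gives both claims of $(ii)$ (the $\|\nabla f\|^2$ bound combined with the already-established decay of $\mathcal{E}$ produces the weighted $\int_{t_0}^t e^{\sqrt{\mu}s}\|\nabla f\|^2 ds$ control). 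In the special case $\beta = 0$, $w = \dot{x}$ and the residual terms in $\|\nabla f\|^2$ that forced the factor $1/2$ vanish; the classical heavy-ball Lyapunov function $f(x)-\min f + \tfrac{1}{2}\|\dot{x}+\sqrt{\mu}(x-x^\star)\|^2$ satisfies $\dot{\mathcal{E}} + \sqrt{\mu}\mathcal{E} \leq -\tfrac{\sqrt{\mu}}{2}\|\dot{x}\|^2$, yielding the full rate $e^{-\sqrt{\mu}t}$ claimed at the end of the theorem.

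\textbf{Main obstacle.} The delicate point is selecting $\lambda$ and $c$ so that, after using strong convexity once, the remaining cross terms $\langle \nabla f, \dot{x}\rangle$ and $\langle x-x^\star, \dot{x}\rangle$ assemble into a non-negative quadratic form in $(\dot{x}, \nabla f, x-x^\star)$. I expect the hypothesis $\beta \leq \tfrac{1}{2\sqrt{\mu}}$ to be precisely the threshold that keeps the relevant $2\times 2$ block in $(\dot{x}, \nabla f)$ positive semidefinite (its discriminant closes exactly at $\beta\sqrt{\mu} = 1/2$). Pushing the rate up to $\sqrt{\mu}$ for $\beta > 0$ appears blocked by an unabsorbable residual of the form $\beta(\sqrt{\mu}\beta - 1)\|\nabla f\|^2$ or $(\beta\sqrt{\mu}-1)\langle x-x^\star,\nabla f\rangle$ arising from the Hessian-damping cross products, which is exactly what forces the factor $1/2$ in the exponent.
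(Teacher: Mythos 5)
Your part (i) is essentially the paper's own argument: with $\lambda=\sqrt{\mu}$ and $c=0$ your functional is exactly the paper's $\mathcal E(t)= f(x(t))-\min_{\cH}f+\frac{1}{2}\norm{\sqrt{\mu}(x(t)-x^\star)+\dot{x}(t)+\beta\nabla f(x(t))}^2$, and the paper likewise uses strong convexity twice (once on $\dotp{\nabla f(x)}{x-x^\star}$, once to split $\mathcal E\geq \frac{1}{2}\mathcal E+\frac{\mu}{4}\norm{x-x^\star}^2$) and closes with a sign condition on a quadratic form. Two small corrections there: the critical $2\times 2$ block is in the pair $(\norm{x-x^\star},\norm{\nabla f(x)})$, namely $\frac{\mu}{4}X^2+\frac{\beta}{2\sqrt{\mu}}Y^2-\beta\sqrt{\mu}XY\geq 0$, whose discriminant closes exactly at $\beta\sqrt{\mu}=\frac{1}{2}$ — not the $(\dot x,\nabla f)$ block; and the residual contains a \emph{negative} multiple $-\beta\sqrt{\mu}\,\dotp{x-x^\star}{\nabla f(x)}$, which is precisely why the positivity of that form must be checked rather than being automatic. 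Your $\beta=0$ remark and the bookkeeping of the constant $C$ are fine.

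The genuine gap is in part (ii), specifically the gradient estimate. You propose to obtain it by ``extracting the $\norm{\nabla f}^2$ summand of $R$'' after weighting by $e^{\frac{\sqrt{\mu}}{2}t}$. This fails: the $\norm{\nabla f}^2$ content of the residual is locked inside the quadratic form above, which is only positive \emph{semi}definite and is degenerate at the admissible endpoint $\beta=\frac{1}{2\sqrt{\mu}}$ (it vanishes along a line), so no positive multiple of $\norm{\nabla f(x)}^2$ can be split off uniformly over the stated parameter range; and for $\beta=0$ the residual contains no gradient term at all. Moreover the claim to be proved carries the heavier weight $e^{\sqrt{\mu}s}$ inside the integral. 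The paper's actual mechanism is a second, separate stage that your plan omits: from part (i) one has $\norm{v(t)}^2\leq 2\mathcal E(t_0)e^{-\frac{\sqrt{\mu}}{2}(t-t_0)}$ with $v=\sqrt{\mu}(x-x^\star)+\dot{x}+\beta\nabla f(x)$; expanding the square, one recognizes $\dotp{\dot{x}}{2\beta\nabla f(x)+2\sqrt{\mu}(x-x^\star)}=\frac{d}{dt}Z(t)$ with $Z(t)\eqdef 2\beta(f(x(t))-f(x^\star))+\sqrt{\mu}\norm{x(t)-x^\star}^2$, and convexity bounds the remaining cross term from below, yielding
\[
\frac{d}{dt}Z(t)+\sqrt{\mu}\,Z(t)+\beta^2\norm{\nabla f(x(t))}^2\leq C e^{-\frac{\sqrt{\mu}}{2}t},
\]
which, integrated with the factor $e^{\sqrt{\mu}t}$ and using $Z\geq 0$, gives exactly the stated bound $e^{-\sqrt{\mu}t}\int_{t_0}^{t}e^{\sqrt{\mu}s}\norm{\nabla f(x(s))}^2\,ds\leq C_1 e^{-\frac{\sqrt{\mu}}{2}t}$. (Your route for $\int_{t_0}^{\infty}e^{\frac{\sqrt{\mu}}{2}t}\norm{\dot{x}(t)}^2\,dt<+\infty$ is fine, since $\frac{\sqrt{\mu}}{2}\norm{\dot{x}}^2$ does appear as a standalone summand of the residual.) You would need to add this second stage — or restrict to $\beta$ strictly below $\frac{1}{2\sqrt{\mu}}$ and accept a degenerating constant — to complete the proof of (ii).
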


\tcb{
\begin{remark}
When $\beta=0$, Theorem~\ref{strong-conv-thm} recovers \cite[Theorem~2.2]{Siegel}. In the case $\beta > 0$,  a result on a related but different dynamical system can be found in \cite[Theorem~1]{WRJ} (their rate is also sligthtly worse than ours). Our gradient estimate is distinctly new in the literature.
\end{remark}
}

\begin{proof}
\begin{enumerate}[label=(\roman*)]
\item Let $x^\star$ be the unique minimizer of $f$.
Define $\mathcal E : [t_0, +\infty[ \to \R^+ $  by
$$
\mathcal E (t)\eqdef f(x(t))-  \min_{\cH}f  + \frac{1}{2} \| \sqrt{\mu} (x(t) -x^\star) + \dot{x}(t)+ \beta \nabla f (x(t)) \|^2.
$$
Set $v(t)= \sqrt{\mu} (x(t) -x^\star) + \dot{x}(t)+ \beta \nabla f (x(t)) $.
Derivation of $\mathcal E (\cdot) $ gives 
$$
\frac{d}{dt}\mathcal E (t)\eqdef\langle  \nabla f (x(t)),  \dot{x}(t)       \rangle  + \langle v(t), \sqrt{\mu}  \dot{x}(t)  +  \ddot{x}(t)  +  \beta \nabla^2 f (x(t))\dot{x}(t)       \rangle .
$$
Using \eqref{dyn-sc}, we get
$$
\frac{d}{dt}\mathcal E (t)=\langle  \nabla f (x(t)),  \dot{x}(t)       \rangle  + \langle v(t), -\sqrt{\mu}  \dot{x}(t)  - \nabla f (x(t))       \rangle .
$$
After developing and simplification, we obtain
\begin{eqnarray*}
&&\frac{d}{dt}\mathcal E (t) + \sqrt{\mu}\langle  \nabla f (x(t)),  x(t) -x^\star       \rangle  + \mu\langle x(t) -x^\star , \dot{x}(t)\rangle + \sqrt{\mu} \| \dot{x}(t) \|^2  \\
&&+ \beta \sqrt{\mu} \langle  \nabla f (x(t)),  \dot{x}(t)       \rangle + \beta \| \nabla f (x(t)) \|^2  = 0.
\end{eqnarray*}
By strong convexity of $f$ we have
$$
\langle  \nabla f (x(t)),  x(t) -x^\star       \rangle  \geq f(x(t))- f(x^\star) + \frac{\mu}{2} \| x(t) -x^\star \|^2 .
$$
Thus, combining the last two relations we obtain 
$$
\frac{d}{dt}\mathcal E (t) + \sqrt{\mu}A \leq 0,
$$
where (the variable $t$ is omitted to lighten the notation)
$$
A \eqdef f(x)- f(x^\star) + \frac{\mu}{2} \| x -x^\star \|^2  + \sqrt{\mu} \langle x -x^\star , \dot{x}\rangle +  \| \dot{x} \|^2  + \beta \langle  \nabla f (x),  \dot{x}       \rangle + \frac{\beta}{\sqrt{\mu}} \| \nabla f (x) \|^2  
$$
Let us formulate  $A$ with $\mathcal E (t)$.
\begin{eqnarray*}
&& A=\mathcal E     - \frac{1}{2} \|  \dot{x}+ \beta \nabla f (x) \|^2   -  \sqrt{\mu} \langle x -x^\star , \dot{x} + \beta \nabla f (x)\rangle        + \sqrt{\mu} \langle x -x^\star , \dot{x}\rangle +  \| \dot{x} \|^2  \\
&&+ \beta \langle  \nabla f (x),  \dot{x}       \rangle + \frac{\beta}{\sqrt{\mu}} \| \nabla f (x) \|^2 . 
\end{eqnarray*}
After developing and simplifying, we obtain
$$
\frac{d}{dt}\mathcal E (t) + \sqrt{\mu}\left(\mathcal E (t)      
+ \frac{1}{2} \|  \dot{x} \|^2  + \left(\frac{\beta}{\sqrt{\mu}}-\frac{\beta^2}{2}\right) \| \nabla f (x) \|^2      -  \beta\sqrt{\mu} \langle x -x^\star ,  \nabla f (x)\rangle        \right) \leq 0.
$$
Since $ 0 \leq \beta \leq \frac{1}{\sqrt{\mu}}$, we immediately get
$\frac{\beta}{\sqrt{\mu}}-\frac{\beta^2}{2} \geq \frac{\beta}{2\sqrt{\mu}} .$ Hence
$$
\frac{d}{dt}\mathcal E (t) + \sqrt{\mu}\left(\mathcal E (t)      
+ \frac{1}{2} \|  \dot{x} \|^2  + \frac{\beta}{2\sqrt{\mu}}  \| \nabla f (x) \|^2      -  \beta\sqrt{\mu} \langle x -x^\star ,  \nabla f (x)\rangle        \right) \leq 0.
$$
Let us use again the strong convexity of $f$ to write
$$
\mathcal E (t) = \frac{1}{2}\mathcal E (t) + \frac{1}{2}\mathcal E (t)  \geq \frac{1}{2}\mathcal E (t)+ \frac{1}{2} \left(f(x(t))- f(x^\star)\right) \geq 
\frac{1}{2}\mathcal E (t) +  \frac{\mu}{4} \| x(t) -x^\star \|^2 .
$$
By combining the two inequalities above, we obtain
$$
\frac{d}{dt}\mathcal E (t) + \frac{\sqrt{\mu}}{2}\mathcal E (t)       + \frac{\sqrt{\mu}}{2}\|  \dot{x} (t)\|^2  +  \sqrt{\mu} B  \leq 0,
$$
where $B= \frac{\mu}{4} \| x(t) -x^\star \|^2 + \frac{\beta}{2\sqrt{\mu}}  \| \nabla f (x) \|^2      -  \beta\sqrt{\mu} \| x -x^\star\|  \|\nabla f (x)  \| $.\\
Set $X=\| x -x^\star\|$, $Y=  \| \nabla f (x) \|$. Elementary algebraic computation gives that, under the condition $ 0 \leq \beta \leq \frac{1}{2\sqrt{\mu}}$
$$
\frac{\mu}{4} X^2 + \frac{\beta}{2\sqrt{\mu}} Y^2 -  \beta\sqrt{\mu}XY \geq 0.
$$
Hence for $ 0 \leq \beta \leq \frac{1}{2\sqrt{\mu}}$
$$
\frac{d}{dt}\mathcal E (t) + \frac{\sqrt{\mu}}{2}\mathcal E (t)       + \frac{\sqrt{\mu}}{2}\|  \dot{x} (t)\|^2  \leq 0.
$$
By integrating the differential inequality above we obtain
$$
\mathcal E (t)   \leq \mathcal E (t_0) e^{-\frac{\sqrt{\mu}}{2}(t-t_0)}  .
$$
By definition of $\mathcal E (t)$, we infer
$$
 f(x(t))-  \min_{\cH}f \leq \mathcal E (t_0) e^{-\frac{\sqrt{\mu}}{2}(t-t_0)} ,
$$
and 
$$ \| \sqrt{\mu} (x(t) -x^\star) + \dot{x}(t)+ \beta \nabla f (x(t)) \|^2 \leq 2 \mathcal  E (t_0) e^{-\frac{\sqrt{\mu}}{2}(t-t_0)} .
$$
\item Set $C= 2\mathcal E (t_0) e^{\frac{\sqrt{\mu}}{2}t_0}$. Developing the above expression, we obtain
\begin{eqnarray*}
&&\mu \| x(t) -x^\star\|^2 + \|  \dot{x}(t)\|^2 + \beta^2  \|  \nabla f (x(t))\|^2    + 2\beta \sqrt{\mu}\left\langle x(t) -x^\star, \nabla f (x(t))  \right\rangle \\
&& +   \left\langle \dot{x}(t),  2 \beta\nabla f (x(t)) +  2\sqrt{\mu}  ( x(t) -x^\star)\right\rangle  \leq C e^{-\frac{\sqrt{\mu}}{2}t}.
\end{eqnarray*}
By  convexity of $f$ we have $\left\langle x(t) -x^\star, \nabla f (x(t))  \right\rangle \geq f(x(t)) - f(x^\star)$. Moreover, 
\begin{eqnarray*}
&&\left\langle \dot{x}(t),  2 \beta\nabla f (x(t)) +  2\sqrt{\mu}  ( x(t) -x^\star)\right\rangle \\
&&= \frac{d}{dt} \left(2 \beta (f(x(t)) - f(x^\star)    )+ \sqrt{\mu} \| x(t) -x^\star\|^2  \right).  
\end{eqnarray*}
Combining the above results, we obtain
\begin{eqnarray*}
&&\sqrt{\mu} [ 2 \beta (f(x(t)) - f(x^\star)) +\sqrt{\mu}  \| x(t) -x^\star\|^2 ] + \beta^2  \|  \nabla f (x(t))\|^2     \\
&& +   \frac{d}{dt} \left(2 \beta (f(x(t)) - f(x^\star)    )+ \sqrt{\mu} \| x(t) -x^\star\|^2  \right) \leq C e^{-\frac{\sqrt{\mu}}{2}t}.
\end{eqnarray*}
Set $Z(t)\eqdef 2 \beta (f(x(t)) - f(x^\star)) +\sqrt{\mu}  \| x(t) -x^\star\|^2 ]$. We have
$$
\frac{d}{dt} Z(t) + \sqrt{\mu} Z(t) + \beta^2  \|  \nabla f (x(t))\|^2 
 \leq C e^{-\frac{\sqrt{\mu}}{2}t}.
$$
By integrating this differential inequality,  elementary computation gives 
$$
e^{- \sqrt{\mu}t} \int_{t_0}^t  e^{ \sqrt{\mu}s}\|  \nabla f (x(s))\|^2 ds
 \leq C e^{-\frac{\sqrt{\mu}}{2}t}.
$$
Noticing that the integral of $e^{ \sqrt{\mu}s}$ over $[t_0, t]$ is of order $e^{ \sqrt{\mu}t}$, the above estimate reflects the fact, as $t \to + \infty$,  the gradient terms $\|  \nabla f (x(t))\|^2$ tend to zero at exponential rate (in average, not pointwise). \qed
\end{enumerate}
\end{proof}

\begin{remark}{Let us justify the choice of $\gamma = 2\sqrt{\mu}$ in Theorem~\ref{strong-conv-thm}.
Indeed, considering 
$$
\ddot{x}(t) + 2\gamma \dot{x}(t)   
 + \beta \nabla^2 f (x(t)) + \nabla f (x(t)) = 0,
$$
a similar proof to that described above can be performed on the basis of the Lyapunov function
$$
\mathcal E (t)\eqdef f(x(t))-  \min_{\cH}f  + \frac{1}{2} \| \gamma (x(t) -x^\star) + \dot{x}(t)+ \beta \nabla f (x(t)) \|^2.
$$
Under the conditions 
$
\gamma \leq \sqrt{\mu}  \, \mbox{ and   }\,  \beta \leq \frac{\mu}{2 \gamma^3}
$
we obtain the exponential convergence rate
$$
 f(x(t))-  \min_{\cH}f =\displaystyle{\cO \left( e^{-\frac{\gamma}{2}t} \right) }\, \mbox{  as } \, t  \,\to + \infty.
$$ 
Taking $\gamma = \sqrt{\mu}$ gives the best convergence rate, and the result of Theorem~\ref{strong-conv-thm}.} 
\end{remark}

\smallskip

\subsection{Non-smooth case}
Following \cite{AABR}, $\DIN{\gamma, \beta}$ is equivalent to the first-order system 
\[
\begin{cases}
\dot x(t) +  \beta \nabla f (x(t)) + \left( \gamma -\frac{1}{\beta}   \right)  x(t) +  \frac{1}{\beta} y(t) = 0;\\
 \dot{y}(t) + \left( \gamma -\frac{1}{\beta}   \right)  x(t) +  \frac{1}{\beta} y(t)  =0.
\end{cases} .
\]
This permits to extend $\DIN{\gamma, \beta}$ to the case of a proper lower semicontinuous convex function $f: \cH \to \Rb$. Replacing the gradient of $f$ by its subdifferential, we obtain its Non-Smooth version  :
\[
\DINNS{\gamma,\beta}  
\begin{cases}
\dot x(t) +  \beta \partial f (x(t)) + \left( \gamma -\frac{1}{\beta}   \right)  x(t) +  \frac{1}{\beta} y(t) \ni 0;\\
\dot{y}(t)    + \left( \gamma -\frac{1}{\beta}   \right)  x(t) +  \frac{1}{\beta} y(t)  =0.
\end{cases}
\]
Most  properties of  ${\rm (DIN)}_{\gamma, \beta}$ are still valid for this generalized version. To illustrate it, let us consider the following extension of Theorem~\ref{strong-conv-thm}.

\begin{theorem}\label{strong-conv-thm-gen}
Suppose that $f: \cH \to \Rb$ is lower semicontinuous and $\mu$-strongly convex for some $\mu >0$.
Let  $x(\cdot)$ be a  trajectory of $\DINNS{2\sqrt{\mu} ,\beta}$. Suppose that $ 0 \leq \beta \leq \frac{1}{2\sqrt{\mu}}$. Then
\begin{align*}
\frac{\mu}{2}\norm{x(t)-x^\star}^2 &\leq f(x(t))-  \min_{\cH}f =\cO \left( e^{-\frac{\sqrt{\mu}}{2}t} \right) \, \mbox{  as t } \,\to + \infty,  \\ 
&\mbox{ and } \, \int_{t_0}^{\infty} e^{\frac{\sqrt{\mu}}{2}t}\| \dot{x}(t)\|^2 dt < + \infty.
\end{align*}
\end{theorem}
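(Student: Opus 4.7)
The approach is to carry out the Lyapunov argument of Theorem~\ref{strong-conv-thm} directly on the first-order formulation $\DINNS{2\sqrt{\mu},\beta}$. Let $\xi(t)\in \partial f(x(t))$ denote the measurable selection provided by the inclusion, so that
$$
\dot x(t) + \beta\,\xi(t) + \pa{2\sqrt{\mu} - \tfrac{1}{\beta}} x(t) + \tfrac{1}{\beta} y(t) = 0 .
$$
Subtracting this identity from the second equation of the system yields the key relation
$$
\dot x(t) + \beta\,\xi(t) = \dot y(t) ,
$$
which identifies the ``velocity-plus-damped-gradient'' vector, central in the smooth proof, with the derivative of the auxiliary variable $y$. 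The advantage is that $\dot y$ is intrinsically available along the trajectory, with no smoothness assumption on $f$.

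Guided by this, I would introduce the Lyapunov function
$$
\mathcal E(t) \eqdef f(x(t)) - \min_{\cH} f + \demi \norm{\sqrt{\mu}\,(x(t) - x^\star) + \dot y(t)}^2 ,
$$
which coincides with the one used in Theorem~\ref{strong-conv-thm} under the substitution $\dot x + \beta \nabla f(x) \leftrightarrow \dot y$. Standard existence theory for monotone first-order systems (\cite{Bre1}) delivers absolutely continuous $x,y$, so $\dot x, \dot y$ exist almost everywhere, and the chain rule for convex lower semicontinuous functions along absolutely continuous curves gives $\tfrac{d}{dt}f(x(t)) = \dotp{\xi(t)}{\dot x(t)}$ a.e. Differentiating the second equation of $\DINNS{2\sqrt{\mu},\beta}$ yields $\ddot y(t) = -(2\sqrt{\mu} - 1/\beta)\dot x(t) - (1/\beta)\dot y(t)$ a.e. Plugging these two ingredients into $\tfrac{d}{dt}\mathcal E(t)$ reduces the computation, line by line, to the one performed in Theorem~\ref{strong-conv-thm}, with $\xi(t)$ simply substituted for $\nabla f(x(t))$; strong convexity enters through
$$
\dotp{\xi(t)}{x(t) - x^\star} \geq f(x(t)) - f(x^\star) + \tfrac{\mu}{2}\norm{x(t) - x^\star}^2 ,
$$
which holds for every subgradient.

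Under the condition $0 \leq \beta \leq 1/(2\sqrt{\mu})$, the same nonnegative quadratic form in $\norm{x-x^\star}$ and $\norm{\xi}$ that was discarded in the smooth analysis remains nonnegative (its coefficients do not depend on differentiability of $f$), so one obtains
$$
\tfrac{d}{dt}\mathcal E(t) + \tfrac{\sqrt{\mu}}{2}\mathcal E(t) + \tfrac{\sqrt{\mu}}{2}\norm{\dot x(t)}^2 \leq 0 \quad \text{a.e.}
$$
Gronwall's lemma yields $\mathcal E(t) \leq \mathcal E(t_0) e^{-\sqrt{\mu}(t-t_0)/2}$, whence the exponential decay of $f(x(t))-\min_{\cH} f$; the distance bound then follows from strong convexity, while multiplying the above inequality by $e^{\sqrt{\mu}t/2}$ and integrating gives $\int_{t_0}^{+\infty} e^{\sqrt{\mu}t/2}\norm{\dot x(t)}^2 dt < +\infty$. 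The principal obstacle I anticipate is purely technical, namely the careful justification at minimal regularity of the chain rule for $f\circ x$ and of the a.e.\ differentiation of $\dot y$ that underlie the formal calculation; once these are in place, the proof is the smooth case transcribed with the subgradient selection $\xi$ replacing $\nabla f(x)$.
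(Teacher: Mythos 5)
Your proposal is correct and follows essentially the same route as the paper: the paper's proof (which is only sketched) uses the Lyapunov function $f(x(t))-\min_{\cH}f+\frac{1}{2}\|\sqrt{\mu}(x(t)-x^\star)-(2\sqrt{\mu}-\frac{1}{\beta})x(t)-\frac{1}{\beta}y(t)\|^2$, which by the second equation of $\DINNS{2\sqrt{\mu},\beta}$ is exactly your $\frac{1}{2}\|\sqrt{\mu}(x(t)-x^\star)+\dot y(t)\|^2$, and it likewise invokes Br\'ezis's derivation rule to transcribe the smooth computation with the subgradient selection in place of $\nabla f$. Your identification of the key relation $\dot x+\beta\xi=\dot y$ and of the chain-rule issue as the only technical point matches the paper's intent precisely.
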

\begin{proof}
Let us introduce  $\mathcal E : [t_0, +\infty[ \to \R^+   $ defined by
$$
\mathcal E (t)\eqdef f(x(t))-  \min_{\cH}f  + \frac{1}{2} \| \sqrt{\mu} (x(t) -x^\star) -  \left( 2 \sqrt{\mu} -\frac{1}{\beta}   \right)  x(t) -  \frac{1}{\beta} y(t)  \|^2,
$$
that will serve as a Lyapunov function. Then, the  proof follows the same lines as that of Theorem~\ref{strong-conv-thm}, with the use of the derivation rule of Brezis \cite[Lemme~3.3, p. 73]{Bre1}.
\end{proof}

\section{Inertial algorithms for strongly convex functions}\label{s_convex-algo}

\tcb{We will show in this section that the exponential convergence of Theorem~\ref{strong-conv-thm} for the inertial system \eqref{dyn-sc} translates into linear convergence in the algorithmic case under proper discretization.}

\subsection{Proximal algorithms}
\subsubsection{Smooth case}
Consider the inertial dynamic \eqref{dyn-sc}.
Its implicit discretization similar to that performed before gives
\begin{equation*}
\frac{1 }{h^2}(x_{k+1} - 2x_{k}+ x_{k-1} ) +   \frac{2\sqrt{\mu}}{h} (x_{k+1} - x_{k}) + \frac{\beta}{h} (\nabla f( x_{k+1}) - \nabla f( x_{k})   ) + \nabla f( x_{k+1})  =0,
\end{equation*} 
where $h$ is the positive step size. Set $s = h^2$. We obtain the following inertial proximal algorithm with hessian damping (SC refers to Strongly Convex):
\begin{eqnarray*}
\boxed{
\begin{array}{rcl}
&& {\rm \mbox{$\IPAHDSC$}}\\
\hline\\
&&  
\begin{cases}
y_k = x_{k} + \left( 1-{\frac{2\sqrt{\mu s}}{1 + 2 \sqrt{\mu s}}}\right) ( x_{k}  - x_{k-1})+ \beta \sqrt{s}   \left( 1-{\frac{2\sqrt{\mu s}}{1 + 2 \sqrt{\mu s}}}\right) \nabla f (x_k)   \\
x_{k+1} = \prox_{{\frac{\beta \sqrt{s} + s}{1 + 2 \sqrt{\mu s}}  f}}(y_k).
\end{cases}
\end{array}
}
\end{eqnarray*}

\begin{theorem} \label{prox-inertiel-strongly convex}
\tcb{Assume that $f: \cH \rightarrow \R$ is a convex $\mathcal C^1$ function and $\mu$-strongly convex, $\mu > 0$,} and suppose that 
\[
0 \leq \beta \leq \frac{1}{2\sqrt{\mu}}  \, \,  \mbox{ and } \, \, \sqrt{s}   \leq \beta.
\]
Set $q={\frac{1}{1+ \demi \sqrt{\mu s}}}$, which satisfies $0 <q <1$. Then, the sequence $\seq{x_k}$ generated by the algorithm {\IPAHDSC} obeys, for any $k \geq 1$
\[
\frac{\mu}{2}\norm{x_k-x^\star}^2 \leq f(x_k)-  \min_{\mathcal H}f \leq E_1 q^{k-1} ,
\]
where $E_1 = f(x_1) -f(x^\star) + \demi \|\sqrt{\mu} (x_1 - x^\star ) + \frac{1}{\sqrt{s}} (x_{1} - x_{0})
+ \beta \nabla f (x_1)\|^2$. Moreover, the gradients converge exponentially fast to zero: setting $\theta = \frac{1}{1+ \sqrt{\mu s}}$ which belongs to $]0,1[$, we have
$$
{\theta}^k \sum_{j=0}^{k-2} {\theta}^{-j} \|  \nabla f (x_{j})\|^2  = \cO \left(  q^k  \right) \, \mbox{  as k } \,\to + \infty.
$$ 
\end{theorem}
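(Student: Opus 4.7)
The strategy is to mirror the continuous Lyapunov analysis of Theorem~\ref{strong-conv-thm} at the discrete level. First, I would verify that \IPAHDSC is exactly the implicit Euler discretization
\[
\tfrac{1}{s}(x_{k+1}-2x_k+x_{k-1}) + \tfrac{2\sqrt{\mu}}{\sqrt{s}}(x_{k+1}-x_k) + \tfrac{\beta}{\sqrt{s}}\bigl(\nabla f(x_{k+1})-\nabla f(x_k)\bigr) + \nabla f(x_{k+1}) = 0,
\]
by writing $\nabla f(x_{k+1}) = \frac{1}{s\mu_k}(y_k - x_{k+1})$ for the appropriate step $\mu_k$ and rearranging. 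This explicit relation will be the workhorse.

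The candidate Lyapunov sequence, directly inspired by the $\mathcal E(t)$ used for \eqref{dyn-sc}, is
\[
E_k \eqdef f(x_k) - f(x^\star) + \tfrac{1}{2}\bigl\|v_k\bigr\|^2, \qquad v_k \eqdef \sqrt{\mu}(x_k-x^\star) + \tfrac{1}{\sqrt{s}}(x_k - x_{k-1}) + \beta\,\nabla f(x_k),
\]
which matches the $E_1$ appearing in the statement. Using the discretization, a short computation gives the clean identity
\[
v_{k+1} - v_k = -\sqrt{\mu}\,(x_{k+1}-x_k) - \sqrt{s}\,\nabla f(x_{k+1}),
\]
which plays the role of the continuous differential relation ``$\dot v = -\sqrt{\mu}\dot x - \nabla f(x)$''. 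The main step is then to show the one-step contraction
\[
E_{k+1} \leq q\, E_k, \qquad q = \tfrac{1}{1+\tfrac{1}{2}\sqrt{\mu s}},
\]
which is the discrete analogue of $\frac{d}{dt}\mathcal E + \frac{\sqrt{\mu}}{2}\mathcal E \leq 0$. To obtain it, I would evaluate $\|v_{k+1}\|^2 - q\|v_k\|^2$ via the identity $\|v_{k+1}\|^2 = \|v_k\|^2 + 2\langle v_{k+1}-v_k,v_{k+1}\rangle - \|v_{k+1}-v_k\|^2$, substitute the expression for $v_{k+1}-v_k$, and use $\mu$-strong convexity
\[
\langle \nabla f(x_{k+1}), x_{k+1}-x^\star\rangle \geq f(x_{k+1}) - f(x^\star) + \tfrac{\mu}{2}\|x_{k+1}-x^\star\|^2,
\]
together with the standard three-point bound $f(x_k) \geq f(x_{k+1}) + \langle \nabla f(x_{k+1}), x_k - x_{k+1}\rangle + \frac{\mu}{2}\|x_k-x_{k+1}\|^2$. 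After collecting terms, the cross terms coming from $\beta\nabla f(x_k)$ inside $v_k$ will produce a quadratic form in the variables $X = \|x_{k+1}-x^\star\|$, $Y = \|\nabla f(x_{k+1})\|$ and $\|x_{k+1}-x_k\|$. The hypotheses $\beta \leq \tfrac{1}{2\sqrt{\mu}}$ and $\sqrt{s}\leq \beta$ are precisely tailored so that this quadratic form is nonnegative, exactly as in the continuous case where the same conditions made the bound on $\tfrac{\mu}{4}X^2 + \tfrac{\beta}{2\sqrt{\mu}}Y^2 - \beta\sqrt{\mu}XY$ nonnegative. I expect this sign-checking step to be the main technical obstacle, since one has to absorb $O(\sqrt{s})$ discretization errors into the nonnegative remainder without losing the rate.

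Once $E_{k+1}\leq qE_k$ is established, iteration gives $E_k \leq q^{k-1}E_1$; the two inequalities $\tfrac{\mu}{2}\|x_k - x^\star\|^2 \leq f(x_k) - \min f$ (by strong convexity) and $f(x_k) - \min f \leq E_k$ yield both stated estimates on the values and the trajectory. For the gradient bound, I would keep, rather than discard, the nonnegative remainder in the contraction, obtaining an inequality of the form
\[
E_{k+1} + c\,\sqrt{s}\,\beta^2\,\|\nabla f(x_k)\|^2 \leq q\, E_k
\]
for some $c>0$ after using $\sqrt{s}\leq \beta$. Multiplying by $\theta^{-k}$ with $\theta = 1/(1+\sqrt{\mu s})$ (chosen so that $q/\theta$ has the right size to telescope), summing from $j=0$ to $k-2$, and using the exponential decay of $E_k$ then delivers the claimed estimate
\[
\theta^{k}\sum_{j=0}^{k-2}\theta^{-j}\|\nabla f(x_j)\|^2 = \cO(q^k).
\]
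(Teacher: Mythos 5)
Your plan for the energy decay is essentially the paper's proof: same Lyapunov sequence $E_k$ and vector $v_k$, the same equivalent formulation \eqref{basic-prox-bbbb} of \IPAHDSC, the same identity $v_{k+1}-v_k=-\sqrt{\mu}(x_{k+1}-x_k)-\sqrt{s}\nabla f(x_{k+1})$, the same two strong-convexity inequalities, and the same reduction to the nonnegativity of two quadratic forms (one in $\norm{x_{k+1}-x^\star},\norm{\nabla f(x_{k+1})}$, one in $\norm{x_{k+1}-x_k},\norm{\nabla f(x_{k+1})}$) under $\beta\le\frac{1}{2\sqrt{\mu}}$ and $\sqrt{s}\le\beta$, yielding $E_{k+1}\le qE_k$ and hence the value and trajectory estimates. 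Where you genuinely diverge is the gradient estimate. You propose to retain a positive multiple of $\|\nabla f(x_{k+1})\|^2$ in the dissipation and telescope $E_{k+1}+c\|\nabla f(x_{k+1})\|^2\le qE_k$; the paper instead goes back to the already-proved bound $\|v_k\|^2\le 2E_1q^{k-1}$, expands the square, and obtains a secondary first-order recursion $\frac{1}{\sqrt{s}}(Z_k-Z_{k-1})+\sqrt{\mu}Z_k+\beta^2\|\nabla f(x_k)\|^2\le 2E_1q^{k-1}$ for $Z_k=2\beta(f(x_k)-f(x^\star))+\sqrt{\mu}\|x_k-x^\star\|^2$, which it iterates with the weight $\theta=\frac{1}{1+\sqrt{\mu s}}$ and sums as a geometric series using $\theta/q<1$. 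Your route is viable, but only through the second quadratic form: the first one degenerates to a perfect square at the admissible boundary $\beta=\frac{1}{2\sqrt{\mu}}$, so no positive multiple of $\|\nabla f(x_{k+1})\|^2$ survives from it, whereas the discriminant of the second form is strictly slack under $\sqrt{s}\le\beta$ (the exact condition is $\sqrt{s}\le\frac{\beta}{2}\bigl(1+\sqrt{1+\frac{2}{\beta\sqrt{\mu}}}\bigr)$, which holds with room to spare), leaving a margin $\epsilon=\frac{\beta}{4}-\frac{\mu}{4}\bigl(\frac{\sqrt{\mu}}{2s}+\frac{\mu}{\sqrt{s}}\bigr)^{-1}>0$ rather than your heuristic $c\sqrt{s}\beta^2$. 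If you carry this out, telescoping actually gives $\sum_k q^{-k}\|\nabla f(x_k)\|^2<+\infty$, which is stronger than the stated $\theta$-weighted estimate (since $\theta<q$); the paper's detour through $Z_k$ buys nothing extra here beyond matching the continuous-time argument of Theorem~\ref{strong-conv-thm}(ii). Mind also the index: your retained remainder naturally involves $\nabla f(x_{k+1})$, not $\nabla f(x_k)$.
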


\tcb{
\begin{remark}
We are not aware of any result of this kind for such a proximal algorithm. 
\end{remark}
}

\begin{proof}
\tcb{Let $x^\star$ be the unique minimizer of $f$}, and consider the sequence   $\seq{E_k}$ 
$$
E_k\eqdef f(x_k) -f(x^\star) + \demi \|v_k\|^2 ,
$$
where \, $
v_k = \sqrt{\mu} (x_k - x^\star ) + \frac{1}{\sqrt{s}} (x_{k} - x_{k-1})
+ \beta \nabla f (x_k).$

\smallskip

\noindent We will use the following equivalent formulation of the algorithm \IPAHDSC
\begin{equation}\label{basic-prox-bbbb}
\frac{1 }{\sqrt{s}}(x_{k+1} - 2x_{k}+ x_{k-1} ) +   2\sqrt{\mu} (x_{k+1} - x_{k}) + \beta (\nabla f( x_{k+1}) - \nabla f( x_{k})   ) + \sqrt{s}\nabla f( x_{k+1})  =0.
\end{equation}
We have
\begin{eqnarray*}
E_{k+1}  - E_k &=&  f (x_{k+1}) -f (x_k) + \frac{1}{2}\|v_{k+1}\|^2 -\frac{1}{2}\|v_k\|^2 .
 \end{eqnarray*}
 Using successively the definition of $v_k$ and \eqref{basic-prox-bbbb}, we get
\begin{eqnarray*}
v_{k+1} - v_{k}&=& \sqrt{\mu}  (x_{k+1} - x_{k}) +\frac{1}{\sqrt{s}} (x_{k+1} - 2x_{k} + x_{k-1})+ \beta (\nabla f (x_{k+1}) - \nabla f (x_k))\\
&=& \sqrt{\mu}  (x_{k+1} - x_{k}) -2\sqrt{\mu} (x_{k+1} - x_{k}) - \sqrt{s}\nabla f( x_{k+1}) \\
&=&  =  -\sqrt{\mu} (x_{k+1} - x_{k}) - \sqrt{s}\nabla f( x_{k+1}).
\end{eqnarray*}
Write shortly $B_k = \sqrt{\mu} (x_{k+1} - x_{k}) + \sqrt{s}\nabla f( x_{k+1})$. We have
\begin{eqnarray*}
&&\frac{1}{2}\|v_{k+1}\|^2 -\frac{1}{2}\|v_k\|^2  = \left\langle  v_{k+1} -v_{k} , v_{k+1} \right\rangle - \frac{1}{2}\|v_{k+1} - v_{k}\|^2 \\
&& =- \left\langle  B_k , \sqrt{\mu} (x_{k+1} - x^\star ) + \frac{1}{\sqrt{s}} (x_{k+1} - x_{k})
+ \beta \nabla f (x_{k+1}) \right\rangle 
- \frac{1}{2}\|B_k\|^2  
\\
&& = -\mu\left\langle x_{k+1} - x_{k}, x_{k+1} -x^\star \right\rangle 
-\sqrt{\frac{\mu}{s}}\|  x_{k+1} - x_{k} \|^2 - \beta \sqrt{\mu}
\left\langle  \nabla f( x_{k+1}), x_{k+1} -x_k \right\rangle   \\
&& - \sqrt{\mu s} \left\langle  \nabla f( x_{k+1}),x_{k+1} -x^\star \right\rangle  - \left\langle  \nabla f( x_{k+1}), x_{k+1} -x_k \right\rangle
- \beta \sqrt{s} \| \nabla f( x_{k+1}) \|^2\\
&&- \frac{1}{2}\mu \|x_{k+1} - x_{k}\|^2 - \frac{1}{2}s \|\nabla f( x_{k+1}\|^2  - \sqrt{\mu s} \left\langle  \nabla f( x_{k+1}), x_{k+1} -x_k \right\rangle
\end{eqnarray*}
By virtue of strong convexity of $f$
\begin{eqnarray*}
f(x_{k}) &\geq & f( x_{k+1}) + \left\langle  \nabla f( x_{k+1}), x_{k}- x_{k+1}  \right\rangle + \frac{\mu}{2}\|x_{k+1} - x_{k}\|^2 ;\\
f(x^\star) &\geq & f( x_{k+1}) + \left\langle  \nabla f( x_{k+1}), x^\star- x_{k+1}  \right\rangle + \frac{\mu}{2}\|x_{k+1} - x^\star\|^2 .
\end{eqnarray*}
Combining the above results,  and after dividing by $\sqrt{s}$, we get 
\begin{eqnarray*}
&&\frac{1}{\sqrt{s}} (E_{k+1}  - E_k) + \sqrt{\mu}[ f( x_{k+1}) - f(x^\star)+ \frac{\mu}{2}\|x_{k+1} - x^\star\|^2     ]\\
&&\leq -\frac{\mu}{\sqrt{s}}\left\langle x_{k+1} - x_{k}, x_{k+1} -x^\star \right\rangle 
-\frac{\sqrt{\mu}}{s}\|  x_{k+1} - x_{k} \|^2 \\
&&- \beta \sqrt{\frac{\mu}{s}}
\left\langle  \nabla f( x_{k+1}), x_{k+1} -x_k \right\rangle   
 -\frac{\mu}{2\sqrt{s}}\|x_{k+1} - x_{k}\|^2 - \beta \| \nabla f( x_{k+1}) \|^2\\
&&
- \frac{\mu}{2\sqrt{s}}\|x_{k+1} - x_{k}\|^2 - \frac{1}{2}\sqrt{s} \|\nabla f( x_{k+1}\|^2  - \sqrt{\mu } \left\langle  \nabla f( x_{k+1}), x_{k+1} -x_k \right\rangle ,
\end{eqnarray*}
which gives, after developing and simplification
\begin{eqnarray*}
&&\frac{1}{\sqrt{s}} (E_{k+1}  - E_k) + \sqrt{\mu}E_{k+1} - \beta \mu 
\left\langle  \nabla f( x_{k+1}), x_{k+1} -x^\star \right\rangle \\
&& \leq -\left(\frac{\sqrt{\mu}}{2s} + \frac{\mu }{\sqrt{s}} \right) \|x_{k+1} - x_{k}\|^2  - \left( \beta  -\frac{\beta^2\sqrt{\mu}}{2} +\frac{\sqrt{s}}{2} \right) \|\nabla f( x_{k+1})\|^2\\
&& - \sqrt{\mu } \left\langle  \nabla f( x_{k+1}), x_{k+1} -x_k \right\rangle .
\end{eqnarray*}
According to $ 0 \leq \beta \leq \frac{1}{2\sqrt{\mu}}$, we have 
$\beta  -\frac{\beta^2\sqrt{\mu}}{2} \geq \frac{3\beta}{4}$, which, with Cauchy-Schwarz inequality, gives
\begin{eqnarray*}
&&\frac{1}{\sqrt{s}} (E_{k+1}  - E_k) 
+ \sqrt{\mu} E_{k+1} +\left(\frac{\sqrt{\mu}}{2s} + \frac{\mu }{\sqrt{s}} \right) \|x_{k+1} - x_{k}\|^2  +  \frac{3\beta}{4} \|\nabla f( x_{k+1})\|^2 \\
&& - \beta \mu 
\| \nabla f( x_{k+1})\| \| x_{k+1} -x^\star \|  - \sqrt{\mu } \| \nabla f( x_{k+1})\| \| x_{k+1} -x_k \| \leq 0.
\end{eqnarray*}
Let us  use again the strong convexity of $f$ to write
$$
E_{k+1}   \geq \frac{1}{2}E_{k+1} + \frac{1}{2} \left(f(x_{k+1})- f(x^\star)\right) \geq 
\frac{1}{2}E_{k+1}  +  \frac{\mu}{4} \| x_{k+1} -x^\star \|^2 .
$$
Combining the two inequalities above, we get
\begin{eqnarray*}
&&\frac{1}{\sqrt{s}} (E_{k+1}  - E_k) 
+ \demi\sqrt{\mu} E_{k+1} + \sqrt{\mu} \frac{\mu}{4} \| x_{k+1} -x^\star \|^2  + \left(\frac{\sqrt{\mu}}{2s} + \frac{\mu }{\sqrt{s}} \right) \|x_{k+1} - x_{k}\|^2 \\
&&  +  \frac{3\beta}{4} \|\nabla f( x_{k+1})\|^2  - \beta \mu 
\| \nabla f( x_{k+1})\| \| x_{k+1} -x^\star \|  - \sqrt{\mu } \| \nabla f( x_{k+1})\| \| x_{k+1} -x_k \| \leq 0 .
\end{eqnarray*}
Let us rearrange  the terms as follows
\begin{eqnarray*}
&&\frac{1}{\sqrt{s}} (E_{k+1}  - E_k) 
+ \demi\sqrt{\mu} E_{k+1} \\
&&+ 
\underset{\text{Term~1}}{\underbrace{\left( \sqrt{\mu} \frac{\mu}{4} \| x_{k+1} -x^\star \|^2   + \frac{\beta}{2} \|\nabla f( x_{k+1})\|^2  - \beta \mu \| \nabla f( x_{k+1})\| \| x_{k+1} -x^\star \|\right)}} +\\
&&  
\underset{\text{Term~2}}{\underbrace{\left( \left(\frac{\sqrt{\mu}}{2s} + \frac{\mu }{\sqrt{s}} \right)  \|x_{k+1} - x_{k}\|^2 + \frac{\beta}{4} \|\nabla f( x_{k+1})\|^2  - \sqrt{\mu } \| \nabla f( x_{k+1})\| \| x_{k+1} -x_k \|   \right)}} \leq 0
\end{eqnarray*}

Let us examine the sign of the last two terms in the rhs of inequality above.
\begin{enumerate}[label={Term~}\arabic*]
\item Set $X=\| x_{k+1} -x^\star\|$, $Y=  \| \nabla f( x_{k+1}) \|$. Elementary algebra gives that
$$
\sqrt{\mu}\frac{\mu}{4} X^2 + \frac{\beta}{2} Y^2 -  \beta\mu XY \geq 0,
$$
holds true  under the condition $ 0 \leq \beta \leq \frac{1}{2\sqrt{\mu}}$. Hence, under this condition 
$$
\sqrt{\mu} \frac{\mu}{4} \| x_{k+1} -x^\star \|^2   + \frac{\beta}{2} \|\nabla f( x_{k+1})\|^2  - \beta \mu 
\| \nabla f( x_{k+1})\| \| x_{k+1} -x^\star \| \geq 0.
$$
\item Set $X=\norm{x_{k+1} - x_{k}} $, $Y=\norm{\nabla f( x_{k+1})}$. Elementary algebra gives
$$
\left(\frac{\sqrt{\mu}}{2s} + \frac{\mu }{\sqrt{s}} \right)  X^2 + 
\frac{\beta}{4} Y^2  - \sqrt{\mu } XY \geq 0 
$$
holds true  under the condition $ \frac{\sqrt{\mu}}{2s} + \frac{\mu }{\sqrt{s}} \geq \frac{\mu}{\beta}$. Hence, under this condition 
$$
\left(\frac{\sqrt{\mu}}{2s} + \frac{\mu }{\sqrt{s}} \right)  \|x_{k+1} - x_{k}\|^2 + 
\frac{\beta}{4} \|\nabla f( x_{k+1})\|^2  - \sqrt{\mu } \| \nabla f( x_{k+1})\| \| x_{k+1} -x_k \|  \geq 0.
$$
In turn, the condition $ \frac{\sqrt{\mu}}{2s} + \frac{\mu }{\sqrt{s}} \geq \frac{\mu}{\beta}$ is equivalent to
$
\sqrt{s}   \leq \frac{\beta}{2}\left( 1 +  \sqrt{1+ \frac{2}{\beta \sqrt{\mu}}}  \right).$\\
Clearly, this condition is satisfied if $\sqrt{s} \leq \beta $.
\end{enumerate}

Let us put the above results together. We have obtained that, under the conditions $ 0 \leq \beta \leq \frac{1}{2\sqrt{\mu}}$ and $\sqrt{s}   \leq \beta$,
$$
\frac{1}{\sqrt{s}} (E_{k+1}  - E_k) 
+ \demi\sqrt{\mu} E_{k+1}  \leq 0.
$$
Set $q= \frac{1}{1+ \demi \sqrt{\mu s}}$, which satisfies $0 <q <1$. From this, we infer
$
E_k \leq q E_{k-1}
$
which gives 
\begin{equation}\label{exp-decrease-1}
E_k \leq E_1 q^{k-1}.
\end{equation}
Since $E_k \geq  f(x_k) - f(x^\star)$, we finally obtain
$$
f(x_k) -f(x^\star) \leq E_1 q^{k-1} = \cO \pa{q^{k}} .
$$

Let us now estimate the convergence rate of the gradients to zero.
 According to the exponential decay of $\seq{E_k}$, as given in \eqref{exp-decrease-1}, and by definition of $E_k$, we have, for  all $k\geq 1$
$$
\| \sqrt{\mu} (x_k - x^\star ) + \frac{1}{\sqrt{s}} (x_{k} - x_{k-1})
+ \beta \nabla f (x_k)\|^2  \leq 2 E_{k} \leq 2 E_1 q^{k-1}.
$$
After developing, we get
\begin{eqnarray*}
&&\mu \| x_k -x^\star\|^2 + \frac{1}{s}\|  x_k - x_{k-1}\|^2 + \beta^2  \|  \nabla f (x_k)\|^2    + 2\beta \sqrt{\mu}\left\langle x_k -x^\star, \nabla f (x_k)  \right\rangle \\
&& +  \frac{1}{\sqrt{s}} \left\langle  x_k - x_{k-1},  2 \beta\nabla f ( x_k ) +  2\sqrt{\mu}  (  x_k -x^\star)\right\rangle  \leq 2 E_1 q^{k-1} .
\end{eqnarray*}
By  convexity of $f$, we have 
$$\left\langle x_k -x^\star, \nabla f (x_k)  \right\rangle \geq f(x_k) - f(x^\star) \,  \mbox{and} \,
\left\langle x_k - x_{k-1},  \nabla f (x_k) \right\rangle \geq f(x_k) - f(x_{k-1})$$
 Moreover, 
$ \left\langle  x_k - x_{k-1},   x_k -x^\star\right\rangle \geq \demi \| x_k -x^\star\|^2 - \demi \| x_{k-1} -x^\star\|^2$.\\
Combining the above results, we obtain
\begin{eqnarray*}
&&\sqrt{\mu} \pa{2 \beta (f(x_k) - f(x^\star)) +\sqrt{\mu}\norm{ x_k -x^\star}^2} + \beta^2 \norm{\nabla f (x_k)}^2     \\
&& + \frac{1}{\sqrt{s}} \pa{2 \beta (f(x_k) - f(x^\star)) + \sqrt{\mu} \norm{x_k -x^\star}^2}\\
&& - \frac{1}{\sqrt{s}} \pa{2 \beta (f(x_{k-1}) - f(x^\star)) + \sqrt{\mu} \norm{x_{k-1} -x^\star}^2} \leq 2 E_1 q^{k-1}.
\end{eqnarray*}
Set $Z_k\eqdef 2 \beta (f(x_k) - f(x^\star)    )+ \sqrt{\mu} \| x_k -x^\star\|^2$. We have, for all $k\geq 1$
\begin{equation}\label{exp-decrease-2}
\frac{1}{\sqrt{s}} \left( Z_k - Z_{k-1}    \right) + \sqrt{\mu} Z_k +   \beta^2  \|  \nabla f (x_k)\|^2   \leq  2E_1 q^{k-1}.
\end{equation}
Set $\theta = \frac{1}{1+ \sqrt{\mu s}}$ which belongs to $]0,1[$. Equivalently
$$
Z_k + \theta \beta^2  \sqrt{s} \|  \nabla f (x_k)\|^2  \leq\theta  Z_{k-1} +2E_1 \theta \sqrt{s}  q^{k-1}.
$$
Iterating this linear recursive inequality gives 
\begin{equation}\label{exp-decrease-3}
Z_k + \theta \beta^2  \sqrt{s} \sum_{p=0}^{k-2} {\theta}^p \|  \nabla f (x_{k-p})\|^2  \leq  \theta^{k-1} Z_1  +2E_1 \theta \sqrt{s} \sum_{p=0}^{k-2} {\theta}^p q^{k-p-1}.
\end{equation}
Then notice that $\frac{\theta}{q}= \frac{1+ \demi \sqrt{\mu s}}{1+ \sqrt{\mu s}}  <1  $, which gives
 $$\sum_{p=0}^{k-2} {\theta}^p q^{k-p-1}= {q}^{k-1} \sum_{p=0}^{k-2}\left(\frac{\theta}{q} \right)^p \leq 2 \left(1+ \frac{1}{\sqrt{\mu s}}    \right) {q}^{k-1}.  $$
Collecting the above results, we obtain
\begin{equation}\label{exp-decrease-4}
\theta \beta^2  \sqrt{s} \sum_{p=0}^{k-2} {\theta}^p \|  \nabla f (x_{k-p})\|^2  \leq 
\theta^{k-1} Z_1 + \frac{4E_1}{\sqrt{\mu }}  {q}^{k-1} .
\end{equation}
Using again the inequality $\theta <q$, and after reindexing, we finally obtain
$$
{\theta}^k \sum_{p=0}^{k-2} {\theta}^{-j} \|  \nabla f (x_{j})\|^2  = \cO \pa{q^k}.
$$
\qed
\end{proof}
\subsubsection{Non-smooth case}
Let $f: \cH \to \R \cup \left\lbrace +\infty \right\rbrace$ be a proper, lower semicontinuous and convex function. We argue as in Section~\ref{non-smooth-prox} by replacing $f$ with its Moreau envelope $f_{\lambda} $. \tcb{The key observation is that the Moreau-Yosida regularization also preserves strong convexity, though with a different modulus as shown by the following result.}
\begin{proposition} Suppose that $f: \cH \to \Rb $ is a proper, lower semicontinuous convex function.
Then, for any $\lambda >0$ and $\mu >0$
\[
f \, \mbox{is} \, \,  \text{$\mu$-strongly convex}\, \Longrightarrow \,  f_{\lambda} \mbox{is strongly convex with modulus } \, \frac{\mu}{1 + \lambda \mu}.
\]
\end{proposition}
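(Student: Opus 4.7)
The plan is to exploit the characterization of strong convexity through strong monotonicity of the (sub)differential, applied to $\nabla f_\lambda$ via the resolvent/prox identity. Recall the key facts recapped in the paper: $\nabla f_\lambda(x) = \tfrac{1}{\lambda}(x-\prox_{\lambda f}(x))$, and $\prox_{\lambda f}(x)$ is characterized by $\tfrac{1}{\lambda}(x - \prox_{\lambda f}(x)) \in \partial f(\prox_{\lambda f}(x))$. Since $f_\lambda$ is convex and of class $\cC^1$, showing that $\nabla f_\lambda$ is strongly monotone with modulus $\tfrac{\mu}{1+\lambda\mu}$ is equivalent to the claimed strong convexity of $f_\lambda$.

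First, I would write $p_x := \prox_{\lambda f}(x)$ and $p_y := \prox_{\lambda f}(y)$ and use the inclusions $\tfrac{1}{\lambda}(x - p_x) \in \partial f(p_x)$, $\tfrac{1}{\lambda}(y - p_y) \in \partial f(p_y)$. Strong convexity of $f$ with modulus $\mu$ yields strong monotonicity of $\partial f$, so
\[
\bigl\langle \tfrac{1}{\lambda}(x-p_x) - \tfrac{1}{\lambda}(y-p_y),\, p_x - p_y\bigr\rangle \geq \mu \|p_x-p_y\|^2.
\]
Rearranging gives $\langle x-y, p_x-p_y\rangle \geq (1+\lambda\mu)\|p_x-p_y\|^2$, and combined with Cauchy--Schwarz this produces the contraction estimate $\|p_x - p_y\| \leq \tfrac{1}{1+\lambda\mu}\|x-y\|$, and consequently $\langle p_x - p_y, x-y\rangle \leq \tfrac{1}{1+\lambda\mu}\|x-y\|^2$.

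Then I would plug this into the monotonicity modulus of $\nabla f_\lambda$:
\[
\langle \nabla f_\lambda(x) - \nabla f_\lambda(y), x-y\rangle = \tfrac{1}{\lambda}\|x-y\|^2 - \tfrac{1}{\lambda}\langle p_x - p_y, x-y\rangle \geq \tfrac{1}{\lambda}\Bigl(1 - \tfrac{1}{1+\lambda\mu}\Bigr)\|x-y\|^2 = \tfrac{\mu}{1+\lambda\mu}\|x-y\|^2,
\]
which is exactly strong monotonicity of $\nabla f_\lambda$ with the advertised modulus, hence strong convexity of $f_\lambda$ with the same modulus.

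I do not expect any real obstacle here: the only mildly delicate point is passing from strong convexity of $f$ to strong monotonicity of $\partial f$ in the nonsmooth setting, which is a standard property (e.g.\ \cite{BC}), and making sure the subgradient inclusions coming from the prox are invoked at the right points $p_x, p_y$ (not at $x, y$). Everything else is an elementary rearrangement. One could alternatively give a more direct variational argument using the infimal-convolution formula $f_\lambda = f \,\square\, \tfrac{1}{2\lambda}\|\cdot\|^2$ and the fact that the infimal convolution of a $\mu$-strongly convex function with a $\tfrac{1}{\lambda}$-smooth convex function is $\tfrac{\mu}{1+\lambda\mu}$-strongly convex, but the prox-based route above is the shortest and fits the tools already used in Section~\ref{non-smooth-prox}.
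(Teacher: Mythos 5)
Your proof is correct, and it takes a genuinely different route from the paper. The paper argues via the decomposition $f = g + \frac{\mu}{2}\norm{\cdot}^2$ with $g$ convex and then invokes the known identity $f_{\lambda}(x) = g_{\theta}\bigl( \frac{1}{1+\lambda\mu}x\bigr) + \frac{\mu}{2(1+\lambda\mu)}\norm{x}^2$ with $\theta = \frac{\lambda}{1+\lambda\mu}$ (citing \cite{BC}), reading off the strong convexity modulus directly from the explicit quadratic term. You instead work at the level of operators: strong monotonicity of $\partial f$ gives the contraction estimate $\norm{p_x - p_y} \leq \frac{1}{1+\lambda\mu}\norm{x-y}$ for the proximal mapping, which you feed into $\nabla f_{\lambda}(x) = \frac{1}{\lambda}(x - p_x)$ to get $\frac{\mu}{1+\lambda\mu}$-strong monotonicity of $\nabla f_{\lambda}$, and then convert back to strong convexity using the standard equivalence for $\cC^1$ convex functions. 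Each step of your chain checks out (the inclusion $\frac{1}{\lambda}(x-p_x) \in \partial f(p_x)$ is invoked at the right points, and the algebra $\frac{1}{\lambda}\bigl(1 - \frac{1}{1+\lambda\mu}\bigr) = \frac{\mu}{1+\lambda\mu}$ is exact). What the paper's approach buys is brevity: one identity and a one-line conclusion. What yours buys is self-containedness — it avoids the specialized envelope-of-a-sum formula and uses only the monotonicity calculus already implicit in Section~\ref{non-smooth-prox}, while yielding the contraction factor of $\prox_{\lambda f}$ as a useful byproduct. Both arguments are valid in an arbitrary Hilbert space and give the same sharp modulus.
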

\begin{proof}
 If $f$ is  strongly convex with constant $\mu >0$, we have $f= g + \frac{\mu}{2} \|\cdot\|^2$  for some convex function~$g$. Elementary calculus (see e.g., \cite[Exercise 12.6]{BC}) gives, with $\theta= \frac{\lambda }{1+ \lambda \mu}  $,
$$
f_{\lambda}(x) = g_{\theta}\left( \frac{1 }{1 +\lambda \mu}\,x\right) + \frac{\mu}{2(1 + \lambda \mu)}\| x\|^2.  
$$
Since $x \mapsto g_{\theta}\left( \frac{1 }{1 + \lambda\mu }\,x\right) $ is  convex, the above formula shows that $f_{\lambda}$ is strongly convex with  constant $\frac{\mu}{1 + \lambda\mu}$. \qed
\end{proof}

\tcb{According to the expressions \eqref{eq:envgrad} and \eqref{eq:envprox}}, \IPAHDSC becomes with $\theta={\frac{\beta \sqrt{s} + s}{1 + 2 \sqrt{\frac{\mu}{1 + \lambda\mu} s}}}$ and $ 
a = {\frac{2\sqrt{\frac{\mu}{1 + \lambda\mu} s}}{1 + 2 \sqrt{\frac{\mu}{1 + \lambda\mu} s}}}$:
\begin{eqnarray*} 
\boxed{
\begin{array}{rcl}
&& {\IPAHDNSSC} \\
\hline\\ 
&&
\begin{cases}
y_k=   x_{k} + (1-a) ( x_{k}  - x_{k-1}) + \frac{\beta \sqrt{s}}{\lambda} (1-a)  \left( x_k-\prox_{ \lambda f} (x_k)\right)   
\\
x_{k+1} =  \frac{\lambda}{\lambda +\theta}y_k + \frac{\theta}{\lambda +\theta}\prox_{ (\lambda + \theta)f}(y_k) 
\end{cases}
\vspace{2mm}
\end{array}
}
\end{eqnarray*}

It is a relaxed inertial proximal algorithm whose coefficients are constant. As a result, its computational burden is equivalent to (actually twice) that of the classical proximal algorithm. A direct application of  the conclusions of Theorem~\ref{prox-inertiel-strongly convex} to $f_{\lambda}$ gives the following statement.

\begin{theorem} 
Suppose that $f: \cH \to \Rb $ is a proper, lower semicontinuous and convex function which is $\mu$-strongly convex for some $\mu >0$. Take $\lambda >0$.
Suppose that 
$$ 
0 \leq \beta \leq \frac{1}{2} \sqrt{ \lambda + \frac{1}{\mu}}  \, \,  \mbox{ and } \, \, \sqrt{s}   \leq \beta.$$
Set  $q=\displaystyle{\frac{1}{1+ \demi \sqrt{\frac{\mu}{1 + \lambda \mu} s}}}$, which satisfies $0 <q <1$.
Then, for any sequence $\seq{x_k}$ generated by algorithm \IPAHDNSSC 
$$
\norm{x_k-x^\star} = \cO\pa{q^{k/2}} \quad \text{and} \quad f(\prox_{ \lambda f}(x_k))-  \min_{\cH}f = \cO\pa{q^{k}} \, \mbox{  as k } \,\to + \infty,$$
and 
$$
  \| x_{k} - \prox_{ \lambda f}(x_{k})\|^2  = \cO \left( q^{k} \right) \, \mbox{  as k } \,\to + \infty.
$$
\end{theorem}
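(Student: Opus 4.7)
The plan is to reduce everything to the smooth strongly convex case already handled by Theorem~\ref{prox-inertiel-strongly convex}, applied to the Moreau envelope $f_{\lambda}$ rather than to $f$ itself.

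First, I would invoke the Proposition just stated to get that $f_{\lambda}$ is strongly convex with modulus $\mu_{\lambda} \eqdef \frac{\mu}{1+\lambda\mu}$, while at the same time $f_{\lambda}$ is $\mathcal{C}^1$ with $\lambda^{-1}$-Lipschitz gradient (standard Moreau-Yosida). Since $\argmin_{\cH} f_{\lambda} = \argmin_{\cH} f$, the minimizer $x^\star$ is common to both. Next I would verify that the hypotheses of Theorem~\ref{prox-inertiel-strongly convex} applied to $f_{\lambda}$ (with modulus $\mu_{\lambda}$) correspond exactly to those stated in the present theorem. The condition $\beta \leq \frac{1}{2\sqrt{\mu_{\lambda}}}$ rewrites as
\[
\beta \leq \tfrac{1}{2}\sqrt{\tfrac{1+\lambda\mu}{\mu}} = \tfrac{1}{2}\sqrt{\lambda + \tfrac{1}{\mu}},
\]
which is the hypothesis assumed, and $\sqrt{s}\leq \beta$ is unchanged.

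Second, I must check that \IPAHDNSSC is exactly \IPAHDSC applied to $f_{\lambda}$, which is what allows the reduction. This is a direct substitution argument: using identity \eqref{eq:envgrad}, the gradient step becomes $\nabla f_{\lambda}(x_k) = \frac{1}{\lambda}(x_k - \prox_{\lambda f}(x_k))$, giving the stated $y_k$-update; using identity \eqref{eq:envprox} for $\prox_{\theta f_{\lambda}}$ with $\theta = \frac{\beta\sqrt{s}+s}{1+2\sqrt{\mu_{\lambda} s}}$ converts the proximal step in \IPAHDSC into the relaxed proximal step in \IPAHDNSSC. The value of $q$ stated in the theorem is precisely $\frac{1}{1+\tfrac{1}{2}\sqrt{\mu_{\lambda}s}}$, matching the rate produced by Theorem~\ref{prox-inertiel-strongly convex} applied to $f_{\lambda}$.

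Third, I would translate the conclusions from $f_{\lambda}$ back to $f$. Theorem~\ref{prox-inertiel-strongly convex} yields
\[
\tfrac{\mu_{\lambda}}{2}\norm{x_k-x^\star}^2 \leq f_{\lambda}(x_k) - \min_{\cH} f_{\lambda} \leq E_1 q^{k-1},
\]
which immediately delivers $\norm{x_k - x^\star} = \cO(q^{k/2})$. To obtain the remaining two rates I would use the fundamental identity \eqref{eq:env},
\[
f_{\lambda}(x_k) - \min_{\cH} f = \bigl(f(\prox_{\lambda f}(x_k)) - \min_{\cH} f\bigr) + \tfrac{1}{2\lambda}\norm{x_k - \prox_{\lambda f}(x_k)}^2,
\]
together with $\min_{\cH} f = \min_{\cH} f_{\lambda}$. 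Both terms on the right are nonnegative (the first because $\prox_{\lambda f}(x_k)$ is feasible for $f$), so each is individually bounded by $f_{\lambda}(x_k) - \min_{\cH} f = \cO(q^k)$. This yields simultaneously the objective rate $f(\prox_{\lambda f}(x_k)) - \min_{\cH} f = \cO(q^k)$ and the fixed-point residual rate $\norm{x_k - \prox_{\lambda f}(x_k)}^2 = \cO(q^k)$.

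The main obstacle is essentially bookkeeping rather than mathematical: making sure the parameter translations under the Moreau smoothing line up exactly, in particular that the constants $a$ and $\theta$ in \IPAHDNSSC match those produced by \IPAHDSC when the modulus $\mu$ is replaced by $\mu_{\lambda}$ and the gradient/proximal identities \eqref{eq:envgrad}--\eqref{eq:envprox} are substituted. Once the reduction is firmly in place, the convergence rates follow with no new Lyapunov analysis.
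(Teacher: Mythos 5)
Your proposal is correct and follows exactly the paper's route: the paper simply states that the theorem is "a direct application of the conclusions of Theorem~\ref{prox-inertiel-strongly convex} to $f_{\lambda}$", relying on the strong-convexity modulus $\frac{\mu}{1+\lambda\mu}$ of the Moreau envelope and the identities \eqref{eq:env}--\eqref{eq:envprox} to rewrite \IPAHDSC as \IPAHDNSSC and to split $f_{\lambda}(x_k)-\min_{\cH}f$ into the two nonnegative terms giving the objective and residual rates. Your write-up supplies the bookkeeping the paper leaves implicit, but there is no difference in method.
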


\subsection{Inertial gradient algorithms}
Let us embark from the continuous dynamic \eqref{dyn-sc} whose linear convergence rate was established in Theorem \ref{strong-conv-thm}. Its explicit time discretization with centered finite differences for speed and acceleration gives
\begin{equation*}
\frac{1}{s}(x_{k+1} - 2x_{k}+ x_{k-1} ) +  \frac{\sqrt{\mu}}{\sqrt{s}}(x_{k+1} - x_{k-1}) + \beta \frac{1}{\sqrt{s}}(\nabla f( x_{k}) - \nabla f( x_{k-1})   ) + \nabla f( x_{k})  = 0 .
\end{equation*}
Equivalently,
 \begin{equation}\label{basic-grad-b}
(x_{k+1} - 2x_{k}+ x_{k-1} ) +   \sqrt{\mu s} (x_{k+1} - x_{k-1}) + \beta \sqrt{s}(\nabla f( x_{k}) - \nabla f( x_{k-1})   ) + s\nabla f( x_{k})  =0,
\end{equation}
which gives the inertial gradient algorithm with Hessian damping (SC stands for Strongly Convex):
\begin{eqnarray*}
\boxed{
\begin{array}{rcl}
&& {\IGAHDSC} \\
\hline
\smallskip\\ 
&& x_{k+1}=    x_{k} + \frac{1 -\sqrt{\mu s}  }{1 +\sqrt{\mu s}}( x_{k}  - x_{k-1})-
\frac{\beta \sqrt{s} }{1 +\sqrt{\mu s}}  \left(\nabla f (x_k)  - \nabla f (x_{k-1})  \right) \hspace{1cm}\vspace{2mm} \\
&& \qquad \quad -\frac{s}{1 +\sqrt{\mu s}}\nabla f (x_k). 
\end{array}
}
\end{eqnarray*}
Let us analyze the linear convergence rate of \IGAHDSC.

\begin{theorem} \label{grad-inertiel-strongly convex}
\tcb{Let $f: \cH \to \R$ be a $\mathcal{C}^1$ and $\mu$-strongly convex function for some $\mu >0$, and whose gradient $\nabla f$  is $L$-Lipschitz continuous.} Suppose that
\begin{align}\label{eq:IGAHDSCcondsbeta}
\beta \leq \displaystyle{ \frac{1}{\sqrt{\mu}}} \text{ and } L \leq \min \left\lbrace \displaystyle{\frac{\sqrt{\mu}}{8 \beta},  \frac{\frac{\sqrt{\mu}}{2s} + \frac{\mu }{\sqrt{s}}}
{2\beta \mu + \frac{1 }{\sqrt{s}} + \frac{\sqrt{\mu}}{2}} } \right\rbrace .
\end{align}
Set  $q= \displaystyle{\frac{1}{1+ \demi \sqrt{\mu s}}}$, which satisfies $0 <q <1$.
Then, for any sequence $\seq{x_k}$ generated by algorithm {\IGAHDSC}, we have
\[
\norm{x_k-x^\star} = \cO\pa{q^{k/2}} \quad \text{and} \quad f(x_k)-  \min_{\cH}f =\cO \left( q^{k} \right) \, \mbox{  as k } \,\to + \infty.
\]
Moreover, the gradients converge exponentially fast to zero: setting $\theta = \frac{1}{1+ \sqrt{\mu s}}$ which belongs to $]0,1[$, we have
$$
{\theta}^k \sum_{p=0}^{k-2} {\theta}^{-j} \|  \nabla f (x_{j})\|^2  = \cO \left(  q^k  \right) \, \mbox{  as k } \,\to + \infty.
$$ 
\end{theorem}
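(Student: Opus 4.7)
The plan is to build a Lyapunov sequence mirroring both the continuous-time analysis of Theorem~\ref{strong-conv-thm} and the proximal analysis of Theorem~\ref{prox-inertiel-strongly convex}. Specifically, I would set
\[
E_k \eqdef f(x_k)-f(x^\star)+\demi\|v_k\|^2, \qquad v_k \eqdef \sqrt{\mu}(x_k-x^\star)+\tfrac{1}{\sqrt{s}}(x_k-x_{k-1})+\beta\nabla f(x_{k-1}),
\]
and aim to prove a one-step contraction of the form $\tfrac{1}{\sqrt{s}}(E_{k+1}-E_k)+\demi\sqrt{\mu}\,E_{k+1}\le 0$, from which $E_k\le E_1 q^{k-1}$ with $q=1/(1+\demi\sqrt{\mu s})$ follows immediately. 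The choice of $\nabla f(x_{k-1})$ (rather than $\nabla f(x_k)$) in $v_k$ is deliberate: when combined with the explicit scheme \eqref{basic-grad-b}, it makes the cross-differences of gradients telescope neatly and produces
\[
v_{k+1}-v_k = -\sqrt{\mu}(x_k-x_{k-1}) - \sqrt{s}\nabla f(x_k),
\]
which is the discrete counterpart of $\dot{v}=-\sqrt{\mu}\dot{x}-\nabla f(x)$ appearing in the proof of Theorem~\ref{strong-conv-thm}.

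With this identity in hand I would expand $\demi\|v_{k+1}\|^2-\demi\|v_k\|^2$ via the three-point identity $\langle v_{k+1}-v_k,v_{k+1}\rangle-\demi\|v_{k+1}-v_k\|^2$, then use $\mu$-strong convexity of $f$ at the two anchors $x_k$ and $x^\star$,
\begin{align*}
f(x_k) &\ge f(x_{k+1})+\langle\nabla f(x_{k+1}),x_k-x_{k+1}\rangle+\tfrac{\mu}{2}\|x_{k+1}-x_k\|^2,\\
f(x^\star) &\ge f(x_{k+1})+\langle\nabla f(x_{k+1}),x^\star-x_{k+1}\rangle+\tfrac{\mu}{2}\|x_{k+1}-x^\star\|^2,
\end{align*}
to kill the gap $f(x_{k+1})-f(x^\star)$ that must appear on both sides. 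As in the proximal case, I would then split $E_{k+1}=\demi E_{k+1}+\demi E_{k+1}$ and use the second copy to absorb a $\tfrac{\mu}{4}\|x_{k+1}-x^\star\|^2$ contribution.

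The central obstacle is that, unlike in the implicit (proximal) case, the gradient in the scheme is evaluated at $x_k$ rather than $x_{k+1}$, so cross-terms of the form $\langle \nabla f(x_k)-\nabla f(x_{k+1}),\cdot\rangle$ survive in the Lyapunov decrement and cannot be handled by convexity alone. These are precisely the terms controlled by the $L$-Lipschitz continuity of $\nabla f$ via $\|\nabla f(x_{k+1})-\nabla f(x_k)\|\le L\|x_{k+1}-x_k\|$. After collecting all terms, the residual will reduce to the non-negativity of two quadratic forms, one in $(\|x_{k+1}-x^\star\|,\|\nabla f(x_k)\|)$ and one in $(\|x_{k+1}-x_k\|,\|\nabla f(x_k)\|)$, as in the proof of Theorem~\ref{prox-inertiel-strongly convex}. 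Elementary discriminant analysis of these quadratic forms will translate into the two upper bounds on $L$ appearing in \eqref{eq:IGAHDSCcondsbeta}, while $\beta\le 1/\sqrt{\mu}$ comes from the first form (exactly as in the continuous case).

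Once the contraction $E_{k+1}\le q E_k$ is established, the three conclusions follow quickly. The bound on function values is immediate from $f(x_k)-\min f\le E_k$, and strong convexity upgrades it to $\norm{x_k-x^\star}=\cO(q^{k/2})$. For the gradient estimate, I would follow verbatim the end of the proof of Theorem~\ref{prox-inertiel-strongly convex}: expand $\|v_k\|^2\le 2E_1 q^{k-1}$, use $\langle\nabla f(x_k),x_k-x^\star\rangle\ge f(x_k)-f(x^\star)$ and the telescoping trick $\langle x_k-x_{k-1},x_k-x^\star\rangle\ge\demi\|x_k-x^\star\|^2-\demi\|x_{k-1}-x^\star\|^2$ to derive a linear recursive inequality on $Z_k\eqdef 2\beta(f(x_k)-f(x^\star))+\sqrt{\mu}\|x_k-x^\star\|^2$ with forcing $\cO(q^{k-1})$, then iterate with $\theta=1/(1+\sqrt{\mu s})<q$ to obtain the announced weighted summability of $\|\nabla f(x_j)\|^2$.
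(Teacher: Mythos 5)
Your overall strategy is the paper's own: the same Lyapunov sequence $E_k=f(x_k)-f(x^\star)+\demi\norm{v_k}^2$, the same one-step contraction $\frac{1}{\sqrt{s}}(E_{k+1}-E_k)+\demi\sqrt{\mu}\,E_{k+1}\le 0$ yielding $E_k\le E_1q^{k-1}$, strong convexity combined with $L$-Lipschitz continuity of $\nabla f$ to cope with the gradient being evaluated at $x_k$, the split $E_{k+1}\ge\demi E_{k+1}+\frac{\mu}{4}\norm{x_{k+1}-x^\star}^2$, and the same $Z_k$ recursion with $\theta<q$ for the gradient estimate. One detail worth knowing about the paper's bookkeeping: the troublesome term $\beta\mu\dotp{\nabla f(x_k)}{x_k-x^\star}$ is bounded by $L\norm{x_k-x^\star}^2\le 2L\norm{x_{k+1}-x^\star}^2+2L\norm{x_{k+1}-x_k}^2$, after which the two bounds on $L$ in \eqref{eq:IGAHDSCcondsbeta} are simply sign conditions on the coefficients of $\norm{x_{k+1}-x^\star}^2$ and $\norm{x_{k+1}-x_k}^2$; no discriminant analysis of quadratic forms with cross terms is needed here, unlike in the proximal case you cite.

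There is, however, one concrete issue with your choice of $v_k$. You anchor it at $x_k$, i.e.\ $v_k=\sqrt{\mu}(x_k-x^\star)+\frac{1}{\sqrt{s}}(x_k-x_{k-1})+\beta\nabla f(x_{k-1})$, which indeed gives $v_{k+1}-v_k=-\sqrt{\mu}(x_k-x_{k-1})-\sqrt{s}\nabla f(x_k)$. But then $\dotp{v_{k+1}-v_k}{v_{k+1}}$ produces the cross terms $-\frac{\sqrt{\mu}}{\sqrt{s}}\dotp{x_k-x_{k-1}}{x_{k+1}-x_k}$ and $-\mu\dotp{x_k-x_{k-1}}{x_{k+1}-x^\star}$, which mix three consecutive iterates; the negative quantities available at step $k+1$ do not obviously absorb a $+\norm{x_k-x_{k-1}}^2$ contribution coming from Young's inequality, so the contraction does not close as written (you would have to augment the Lyapunov function with a $\norm{x_k-x_{k-1}}^2$ term or accept different conditions on $(L,s,\beta)$). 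The paper instead anchors at $x_{k-1}$, taking $v_k=\sqrt{\mu}(x_{k-1}-x^\star)+\frac{1}{\sqrt{s}}(x_k-x_{k-1})+\beta\nabla f(x_{k-1})$, so that $v_{k+1}-v_k=-\sqrt{\mu}(x_{k+1}-x_k)-\sqrt{s}\nabla f(x_k)$; the displacement then matches the one inside $v_{k+1}$ and the expansion yields the benign term $-\frac{\sqrt{\mu}}{s}\norm{x_{k+1}-x_k}^2$. With that single correction to the anchor, the rest of your plan goes through exactly as in the paper.
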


\tcb{
\begin{remark}
{~}
\begin{enumerate}[label=\arabic*.]
\item {\IGAHDSC} can be seen as an extension of the Nesterov accelerated method for strongly convex functions that corresponds to the particular case $\beta=0$. Actually, in this very specific case, {\IGAHDSC} is nothing but the (HBF) method with stepsize parameter $a = \frac{s}{1 +\sqrt{\mu s}}$ and momentum parameter $b = \frac{1 - \sqrt{\mu s}}{1 +\sqrt{\mu s}}$; see \cite[(2) in Section~3.2]{Polyak2}. Thus, if $f$ is also of class $\mathcal{C}^2$ at $x^\star$, one can obtain linear convergence of the iterates $\seq{x_k}$ (but not the objective values) from \cite[Theorem~1]{Polyak2} under the assumption that $s < 4/L$ (which can be shown to be weaker than \eqref{eq:IGAHDSCcondsbeta} since the latter is equivalent for $\beta=0$ to $sL \leq (\sqrt{1-c+c^2}-(1-c))^2/c \leq 1$, where $c=\mu/L$).
\item In fact, even for $\beta > 0$, by lifting the problem to the vector $z_k = \begin{pmatrix} x_{k} - x^\star \\ x_{k-1} - x^\star\end{pmatrix}$ as is standard in the (HBF) method, one can write {\IGAHDSC} as
\[
z_{k+1} = 
\begin{pmatrix} 
(1+b)\Id  - (a+d)\nabla f^2(x^\star) & -b \Id + d\nabla f^2(x^\star) \\ 
\Id & 0
\end{pmatrix} 
z_{k} + o(z_k) ,
\]
where $d=\frac{\beta \sqrt{s} }{1 +\sqrt{\mu s}}$. Linear convergence of the iterates $\seq{x_k}$ can then be obtained by studying the spectral properties of the above matrix.

\item For $\beta=0$, Theorem~\ref{grad-inertiel-strongly convex} recovers \cite[Theorem~3.2]{Siegel}, though the author uses a slightly different discretization, requires only $s \leq 1/L$ and his convergence rate is $\displaystyle{\pa{1+\sqrt{\mu s}}}^{-1}$, which is slightly better than ours for this special case. In the case $\beta > 0$,  a result on a scheme related but different from {\IGAHDSC} can be found in \cite[Theorem~3]{WRJ} (their rate is also slightly worse than ours). Our estimate are also new in the literature.
\end{enumerate}
\end{remark}
}

\begin{proof}
The proof is based on Lyapunov analysis, and the decrease property at linear rate of the sequence $\seq{E_k}$ defined by
$$
E_k\eqdef f( x_{k}) -f(x^\star) + \demi \|v_k\|^2 ,
$$
where  $x^\star$ is the unique minimizer of $f$, and 
$$
v_k = \sqrt{\mu} (x_{k-1} - x^\star ) + \frac{1}{\sqrt{s}} (x_{k} - x_{k-1})
+ \beta \nabla f (x_{k-1}).
$$
We have
$E_{k+1}  - E_k =  f (x_{k+1}) -f (x_{k}) + \frac{1}{2}\|v_{k+1}\|^2 -\frac{1}{2}\|v_k\|^2 .$ 
Using successively the definition of $v_k$ and \eqref{basic-grad-b}, we obtain
\begin{eqnarray*}
&&v_{k+1} - v_{k}= \sqrt{\mu}  (x_{k} - x_{k-1}) +\frac{1}{\sqrt{s}} (x_{k+1} - 2x_{k} + x_{k-1})+ \beta (\nabla f (x_{k}) - \nabla f (x_{k-1}))\\
&&= \frac{1}{\sqrt{s}} \Big((x_{k+1} - 2x_{k} + x_{k-1})+ \sqrt{\mu s} (x_{k} - x_{k-1}) +   \beta \sqrt{s}(\nabla f( x_{k}) - \nabla f( x_{k-1})   ) \Big) \\
&&=   \frac{1}{\sqrt{s}} \Big(-s\nabla f( x_{k})- \sqrt{\mu s} (x_{k+1} - x_{k-1}) +   \sqrt{\mu s} (x_{k} - x_{k-1})    ) \Big)\\
&&=    -\sqrt{\mu} (x_{k+1} - x_{k}) - \sqrt{s}\nabla f( x_{k}).
\end{eqnarray*}
Since $\frac{1}{2}\|v_{k+1}\|^2 -\frac{1}{2}\|v_k\|^2  = \left\langle  v_{k+1} -v_{k} , v_{k+1} \right\rangle - \frac{1}{2}\|v_{k+1} - v_{k}\|^2 $, we have
\begin{eqnarray*}
&&\frac{1}{2}\|v_{k+1}\|^2 -\frac{1}{2}\|v_k\|^2  = - \frac{1}{2}\|\sqrt{\mu} (x_{k+1} - x_{k}) + \sqrt{s}\nabla f( x_{k})\|^2 \\
&&- \left\langle\sqrt{\mu} (x_{k+1} - x_{k}) + \sqrt{s}\nabla f( x_{k}) , \sqrt{\mu} (x_{k} - x^* ) + \frac{1}{\sqrt{s}} (x_{k+1} - x_{k})
+ \beta \nabla f (x_{k}) \right\rangle 
\\
&& = -\mu\left\langle x_{k+1} - x_{k}, x_{k} -x^* \right\rangle 
-\sqrt{\frac{\mu}{s}}\| x_{k+1} - x_{k} \|^2  - \beta \sqrt{\mu}
\left\langle  \nabla f( x_{k}), x_{k+1} -x_{k}\right\rangle   \\
&& - \sqrt{\mu s} \left\langle  \nabla f( x_{k}),x_{k} -x^* \right\rangle  - \left\langle  \nabla f( x_{k}), x_{k+1} -x_k \right\rangle
- \beta \sqrt{s} \| \nabla f( x_{k})\|^2 \\
&&- \frac{1}{2}\mu \|x_{k+1} - x_{k}\|^2 - \frac{1}{2}s \|\nabla f( x_{k}\|^2  - \sqrt{\mu s} \left\langle  \nabla f( x_{k}), x_{k+1} -x_{k} \right\rangle .
\end{eqnarray*}
By strong convexity of $f$ and $L$-Lipschitz continuity of $\nabla f $ we have  
\begin{eqnarray*}
f(x^\star) &\geq&  f( x_{k}) + \left\langle  \nabla f( x_{k}), x^\star- x_{k}  \right\rangle + \frac{\mu}{2}\| x_{k}  - x^\star\|^2\\
f(x_{k}) &\geq & f( x_{k+1}) + \left\langle  \nabla f( x_{k+1}), x_{k}- x_{k+1}  \right\rangle + \frac{\mu}{2}\|x_{k+1} - x_{k}\|^2 \\
&\geq & f( x_{k+1}) + \left\langle  \nabla f( x_{k}), x_k- x_{k+1}  \right\rangle + (\frac{\mu}{2} -L)\|x_{k+1} - x_k\|^2 .
\end{eqnarray*} 
Combining the results above, and after dividing by $\sqrt{s}$, we get
\begin{eqnarray*}
&&\frac{1}{\sqrt{s}} (E_{k+1}  - E_k) + \sqrt{\mu}[ f( x_{k+1}) - f(x^\star)+ \frac{\mu}{2}\|x_{k} - x^\star\|^2  ]  + \sqrt{\mu}(f( x_{k}) -f( x_{k+1}))\\
&&\leq -\frac{\mu}{\sqrt{s}}\left\langle x_{k+1} - x_{k}, x_{k} -x^\star \right\rangle 
-\frac{\sqrt{\mu}}{s}\|x_{k+1} - x_{k}\|^2 - \beta \sqrt{\frac{\mu}{s}}
\left\langle  \nabla f( x_{k}), x_{k+1} -x_{k} \right\rangle \\
&& + \frac{1}{\sqrt{s}}(L-\frac{\mu}{2})\|x_{k+1} - x_{k}\|^2
- \frac{\mu}{2\sqrt{s}}\|x_{k+1} - x_{k}\|^2 \\
&&-\left( \beta + \frac{1}{2}\sqrt{s} \right)\|\nabla f( x_{k}\|^2  - \sqrt{\mu } \left\langle  \nabla f( x_{k}), x_{k+1} -x_{k} \right\rangle .
\end{eqnarray*}
Let us make appear $E_k$
\begin{eqnarray*}
&&\frac{1}{\sqrt{s}} (E_{k+1}  - E_k) + \sqrt{\mu}E_{k+1}  \leq  \sqrt{\mu} \left\langle  \nabla f( x_{k}), x_{k+1} -x_{k} \right\rangle + \sqrt{\mu} \frac{L}{2}\|x_{k+1} - x_{k}\|^2 \\
&& + \frac{\sqrt{\mu}}{2}\|\frac{1}{\sqrt{s}} (x_{k+1} - x_{k})
+ \beta \nabla f (x_{k})   \|^2 +\mu\left\langle x_{k} -x^\star, \frac{1}{\sqrt{s}} (x_{k+1} - x_{k})
+ \beta \nabla f (x_{k})   \right\rangle \\
&& -\frac{\mu}{\sqrt{s}}\left\langle x_{k+1} - x_{k}, x_{k} -x^\star \right\rangle 
-\frac{\sqrt{\mu}}{s}\|x_{k+1} - x_{k}\|^2 - \beta \sqrt{\frac{\mu}{s}}
\left\langle  \nabla f( x_{k}), x_{k+1} -x_{k} \right\rangle \\
&& + \frac{1}{\sqrt{s}}(L-\frac{\mu}{2})\|x_{k+1} - x_{k}\|^2
- \frac{\mu}{2\sqrt{s}}\|x_{k+1} - x_{k}\|^2 \\
&&-\left( \beta + \frac{1}{2}\sqrt{s} \right)\|\nabla f( x_{k}\|^2  - \sqrt{\mu } \left\langle  \nabla f( x_{k}), x_{k+1} -x_{k} \right\rangle .
\end{eqnarray*}
After developing and simplification, we get
\begin{eqnarray*}
&&\frac{1}{\sqrt{s}} (E_{k+1}  - E_k) + \sqrt{\mu}E_{k+1} 
 \leq -\left(\frac{\sqrt{\mu}}{2s} + \frac{\mu }{\sqrt{s}} -L\left(             \frac{1 }{\sqrt{s}} + \frac{\sqrt{\mu}}{2}\right)\right) \|x_{k+1} - x_{k}\|^2 \\
 && - \left( \beta  -\frac{\beta^2\sqrt{\mu}}{2} +\frac{\sqrt{s}}{2} \right) \|\nabla f( x_{k+1})\|^2 + \beta \mu  
\left\langle  \nabla f( x_{k}), x_{k} -x^\star \right\rangle . 
\end{eqnarray*}
Let us majorize this last term by using the Lipschitz continuity of 
$\nabla f$
\begin{eqnarray*}
\left\langle  \nabla f( x_{k}), x_{k} -x^\star \right\rangle &=&   \left\langle  \nabla f( x_{k}) -  \nabla f( x^\star), x_{k} -x^\star \right\rangle
 \leq  L \|x_{k} -x^\star\| ^2 \\
&\leq & 2L \|x_{k+1} -x^\star\| ^2  + 2L \|x_{k+1} - x_{k}\|^2 .
\end{eqnarray*}
Therefore
\begin{eqnarray*}
&&\frac{1}{\sqrt{s}} (E_{k+1}  - E_k) + \sqrt{\mu}E_{k+1} 
+\left(\frac{\sqrt{\mu}}{2s} + \frac{\mu }{\sqrt{s}} -L\left(             2\beta \mu + \frac{1 }{\sqrt{s}} + \frac{\sqrt{\mu}}{2}\right)\right) \|x_{k+1} - x_{k}\|^2 \\
 && +\left( \beta  -\frac{\beta^2\sqrt{\mu}}{2} +\frac{\sqrt{s}}{2} \right) \|\nabla f( x_{k+1})\|^2 - 2\beta \mu  L 
\|x_{k+1} - x^\star\|^2 \leq 0.  
\end{eqnarray*}
According to $ 0 \leq \beta \leq \frac{1}{\sqrt{\mu}}$, we have 
$\beta  -\frac{\beta^2\sqrt{\mu}}{2} \geq \frac{\beta}{2}$, which gives
\begin{eqnarray*}
&&\frac{1}{\sqrt{s}} (E_{k+1}  - E_k) + \sqrt{\mu}E_{k+1} 
+\left(\frac{\sqrt{\mu}}{2s} + \frac{\mu }{\sqrt{s}} -L\left(             2\beta \mu + \frac{1 }{\sqrt{s}} + \frac{\sqrt{\mu}}{2}\right)\right) \|x_{k+1} - x_{k}\|^2 \\
 && +\frac{\beta}{2} \|\nabla f( x_{k+1})\|^2 - 2\beta \mu  L 
\|x_{k+1} - x^\star\|^2 \leq 0.  
\end{eqnarray*}
Let us  use again the strong convexity of $f$ to write
$$
E_{k+1}   \geq \frac{1}{2}E_{k+1} + \frac{1}{2} \left(f(x_{k+1})- f(x^\star)\right) \geq 
\frac{1}{2}E_{k+1}  +  \frac{\mu}{4} \| x_{k+1} -x^\star \|^2 .
$$
Combining the two above relations we get
\begin{eqnarray*}
&&\frac{1}{\sqrt{s}} (E_{k+1}  - E_k) 
+ \demi\sqrt{\mu} E_{k+1} + \left(\sqrt{\mu} \frac{\mu}{4} - 2\beta \mu  L\right) \| x_{k+1} -x^\star \|^2  + \\
&& \left(\frac{\sqrt{\mu}}{2s} + \frac{\mu }{\sqrt{s}} -L\left(             2\beta \mu + \frac{1 }{\sqrt{s}} + \frac{\sqrt{\mu}}{2}\right)\right) \|x_{k+1} - x_{k}\|^2 
+\frac{\beta}{2} \|\nabla f( x_{k+1})\|^2 \leq 0
\end{eqnarray*}
Let us examine the sign of the above quantities:
Under the condition $L \leq \frac{\sqrt{\mu}}{8 \beta}$ we have 
$\sqrt{\mu} \frac{\mu}{4} - 2\beta \mu  L \geq 0$.
Under the condition 
$L \leq \frac{\frac{\sqrt{\mu}}{2s} + \frac{\mu }{\sqrt{s}}}
{2\beta \mu + \frac{1 }{\sqrt{s}} + \frac{\sqrt{\mu}}{2}}$ we have
$\frac{\sqrt{\mu}}{2s} + \frac{\mu }{\sqrt{s}} -L\left(2\beta \mu + \frac{1 }{\sqrt{s}} + \frac{\sqrt{\mu}}{2}\right) \geq 0$.
Therefore, under the above conditions
$$
\frac{1}{\sqrt{s}} (E_{k+1}  - E_k) 
+ \demi\sqrt{\mu} E_{k+1}  +\frac{\beta}{2} \|\nabla f( x_{k+1})\|^2 \leq 0.
$$
Set $q= \frac{1}{1+ \demi \sqrt{\mu s}}$, which satisfies $0 <q <1$. By a similar argument as in Theorem \ref{prox-inertiel-strongly convex} 
$$
E_k \leq E_1 q^{k-1}.
$$
According to the definition of $E_k \geq  f(x_k) -f(x^\star)$, we finally obtain
$$
f(x_k) -f(x^\star) = \cO \left( q^{k} \right),
$$
and the linear convergence of $x_k$ to $x^\star$ and that of the gradients to zero. \qed
\end{proof}

\section{Numerical results}\label{sec:numerics}
Here, we illustrate our results on the composite problem on $\cH=\R^n$,
\begin{equation}\label{eq:minP}\tag{RLS}
\min_{x \in \R^n} \left\{ f(x) \eqdef \frac{1}{2}\norm{y-Ax}^2 + g(x) \right\} ,
\end{equation}
where $A$ is a linear operator from $\R^n$ to $\R^m$, $m \leq n$, $g: \R^n \to \Rb$ is a proper lsc convex function which acts as a regularizer. Problem~\eqref{eq:minP} is extremely popular in a variety of fields ranging from inverse problems in signal/image processing, to machine learning and statistics. Typical examples of $g$ include the $\ell_1$ norm (Lasso), the $\ell_1-\ell_2$ norm (group Lasso), the total variation, or the nuclear norm (the $\ell_1$ norm of the singular values of $x \in \R^{N \times N}$ identified with a vector in $\R^n$ with $n=N^2$). To avoid trivialities, we assume that the set of minimizers of~\eqref{eq:minP} is non-empty.

Though~\eqref{eq:minP} is a composite non-smooth problem, it fits perfectly well into our framework. Indeed, the key idea is to appropriately choose the metric. For a symmetric positive definite matrix $S \in \R^{n \times n}$, denote the scalar product in the metric $S$ as $\dotp{S\cdot}{\cdot}$ and the corresponding norm as $\norm{\cdot}_S$. When $S=I$, then we simply use the shorthand notation for the Euclidean scalar product $\dotp{\cdot}{\cdot}$ and norm $\norm{\cdot}$. For a proper convex lsc function $h$, we denote $h_S$ and $\prox_h^S$ its Moreau envelope and proximal mapping in the metric $S$, \ie
\[
h_S(x) = \min_{z \in \R^n} \frac{1}{2}\norm{z-x}_S^2 + h(z), \quad \prox^S_{h}(x) = \argmin_{z \in \R^n} \frac{1}{2}\norm{z-x}_S^2 + h(z) .
\] 
Similarly, when $S=I$, we drop $S$ in the above notation.

Let $M=s^{-1}I-A^*A$. With the proviso that $0 < s\norm{A}^2 < 1$, $M$ is a symmetric positive definite matrix. It can be easily shown (we provide a proof in Appendix~\ref{SS:proofproxFBM} for completeness; see also the discussion in~\cite[Section~4.6]{chambollereview}), that the proximal mapping of $f$ as defined in \eqref{eq:minP} in the metric $M$ is
\begin{equation}\label{eq:proxFBM}
\prox^M_{f}(x) = \prox_{s g}(x + s A^*(y-Ax)) ,
\end{equation}
which is nothing but the forward-backward fixed-point operator for the objective in \eqref{eq:minP}. Moreover, $f_M$ is a continuously differentiable convex function whose gradient (again in the metric $M$) is given by the standard identity
\[
\nabla f_M(x) = x - \prox^M_{f}(x) ,
\]
and $\norm{\nabla f_M(x)-\nabla f_M(z)}_M \leq \norm{x-z}_M$, \ie $\nabla f_M$ is Lipschitz continuous in the metric $M$. In addition, a standard argument shows that 
\[
\argmin_{\cH} f = \mathrm{Fix}(\prox^M_{f}) = \argmin_{\cH} f_M .
\]
We are then in position to solve \eqref{eq:minP} by simply applying \IGAHD (see Section~\ref{sec:igahd}) to $f_M$. We infer from Theorem~\ref{pr.decay_E_k} and properties of $f_M$ that
\[
f(\prox^M_f(x_k))-\min_{\R^n} f = \mathcal{O}(k^{-2}) .
\]
\IGAHD and FISTA (\ie \IGAHD with $\beta=0$) were applied to $f_M$ with four instances of $g$: $\ell_1$ norm, $\ell_1-\ell_2$ norm, the total variation, and the nuclear norm. The results are depicted in Figure~\ref{fig:rls}. One can clearly see that the convergence profiles observed for both algorithms agree with the predicted rate. Moreover, \IGAHD exhibits, as expected, less oscillations than FISTA, and eventually converges faster.

\begin{figure}
  \includegraphics[width=.5\textwidth]{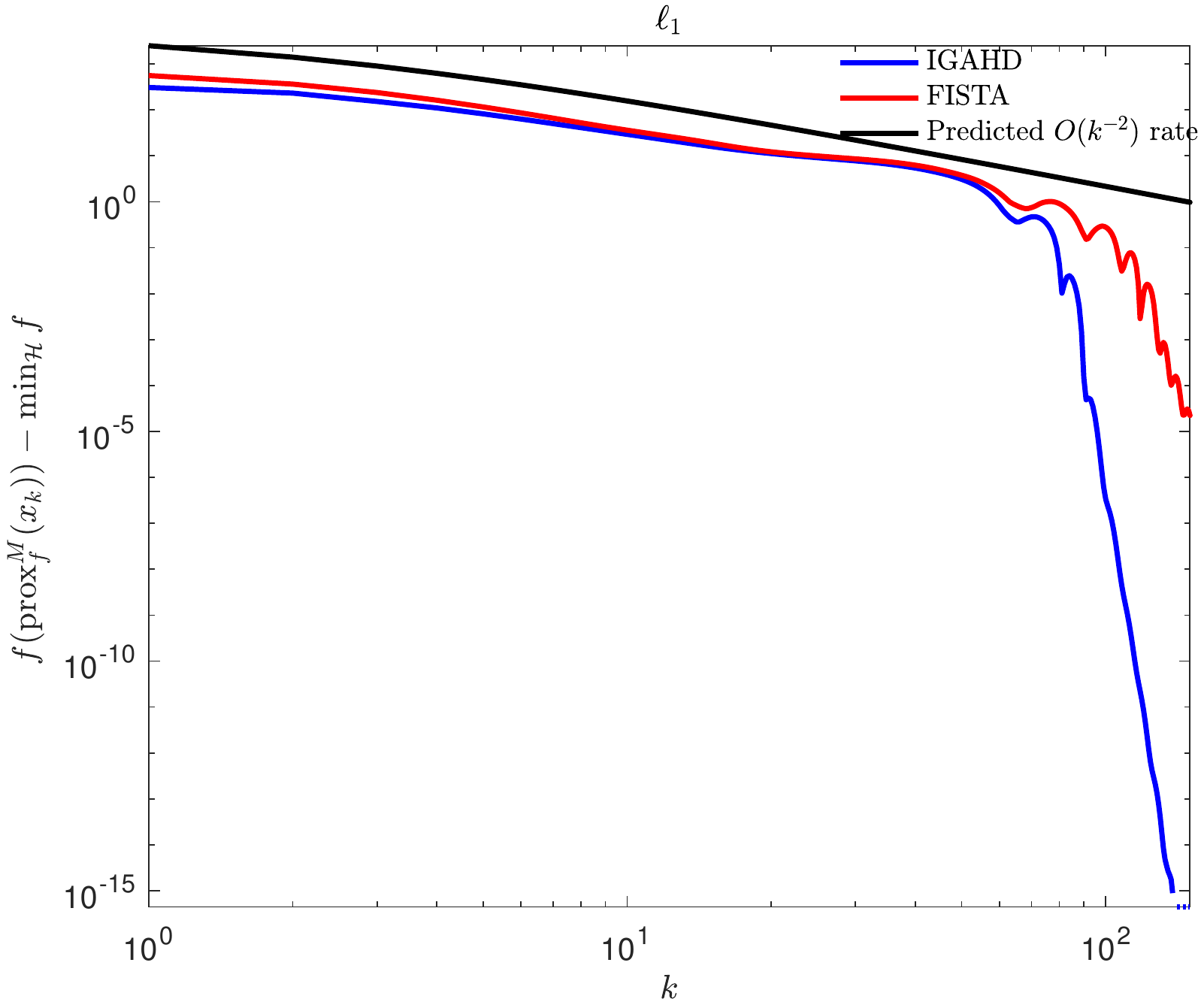}
  \includegraphics[width=.5\textwidth]{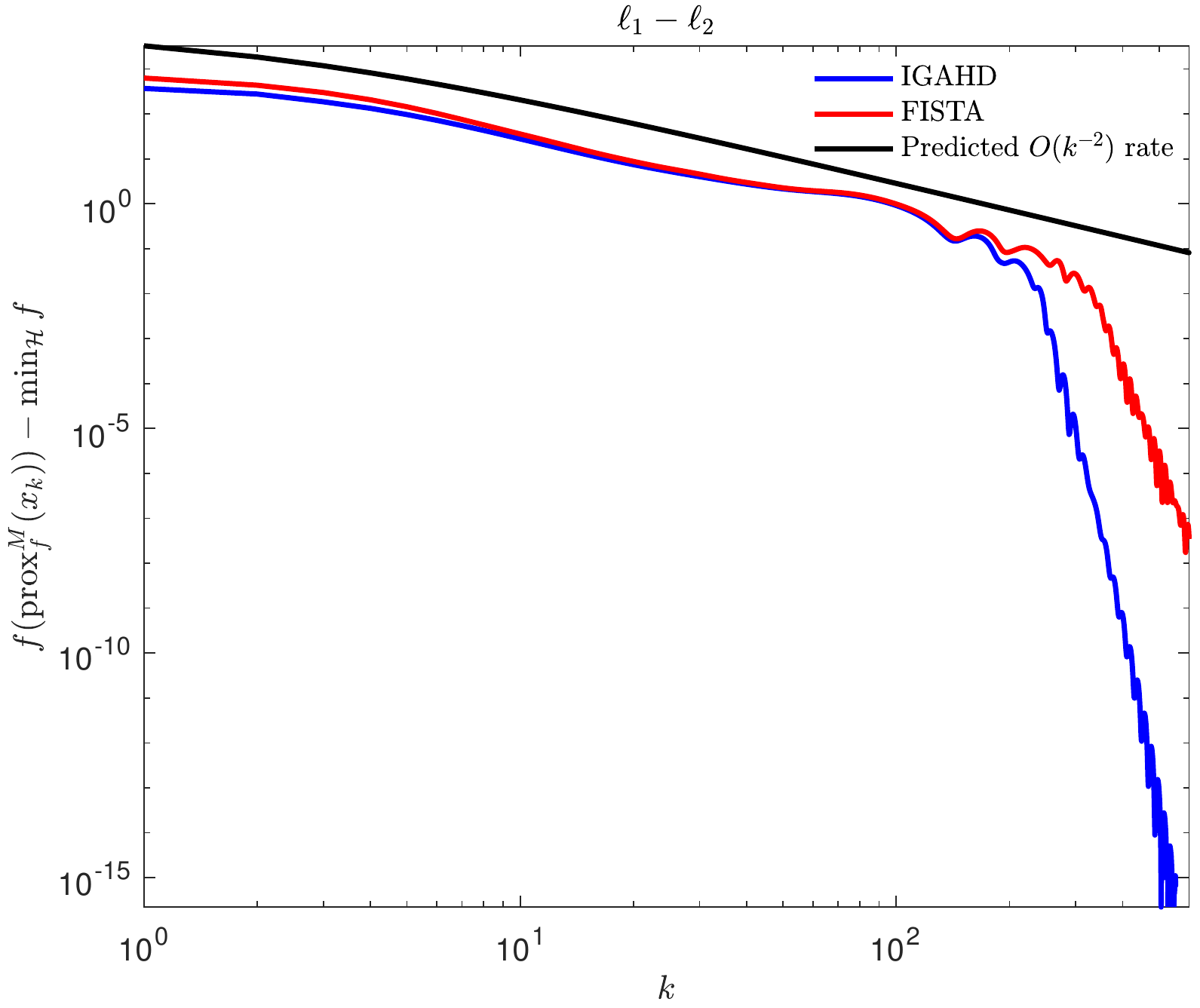}
  \includegraphics[width=.5\textwidth]{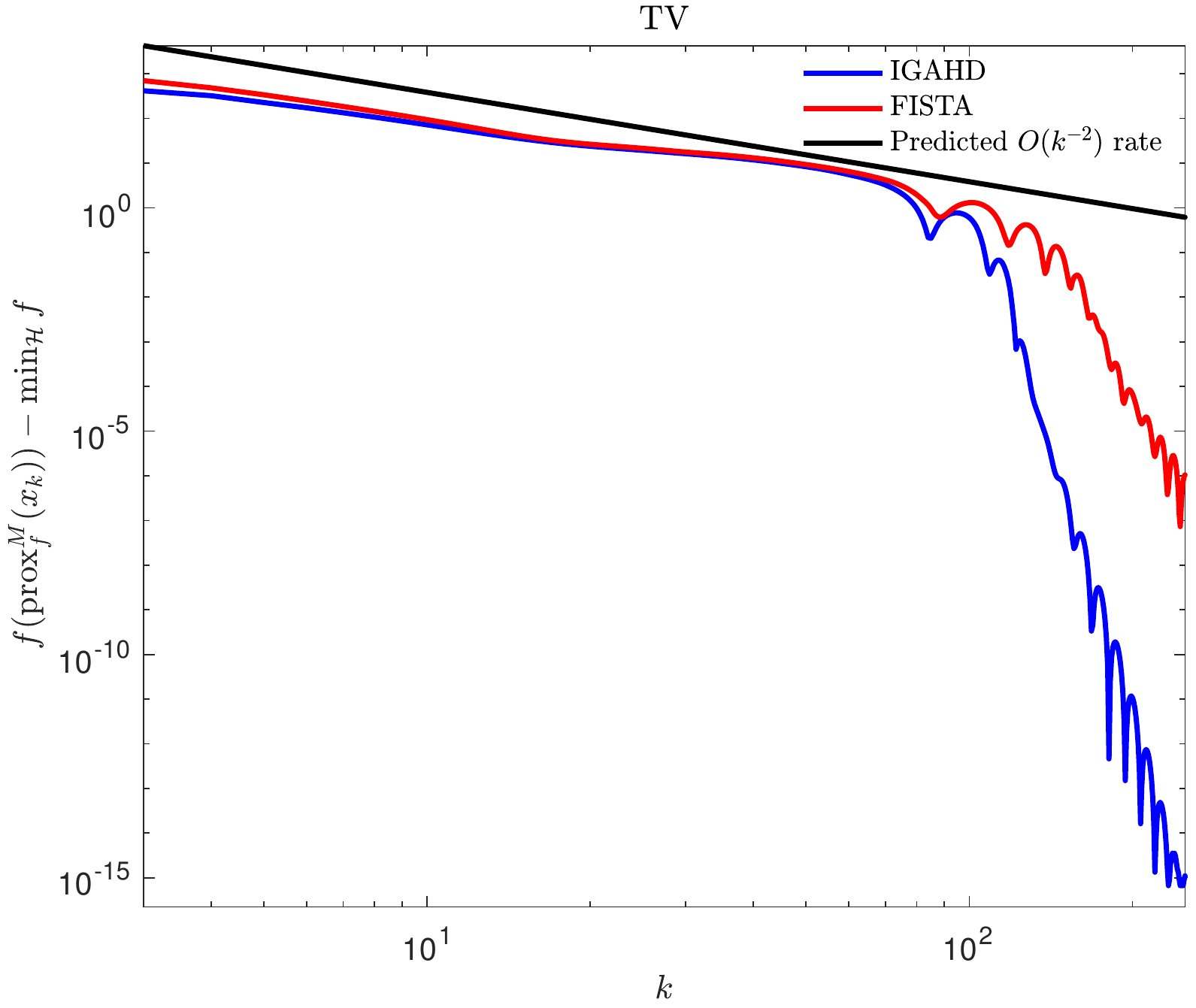}
  \includegraphics[width=.5\textwidth]{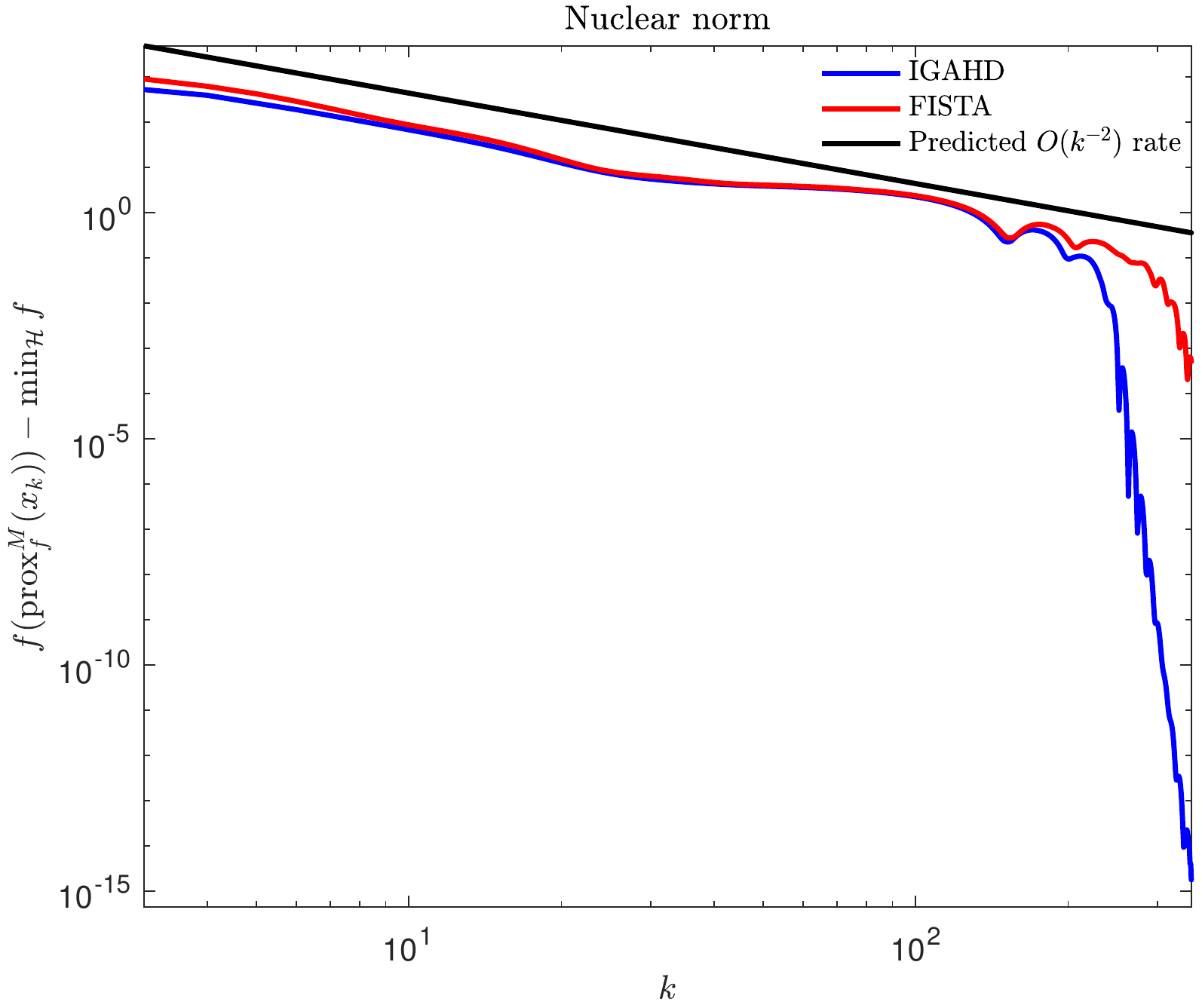}
\caption{Evolution of $f(\prox^M_f(x_k))-\min_{\R^n} f$, where $x_k$ is the iterate of either \IGAHD or FISTA, when solving \eqref{eq:minP} with different regularizers $g$.}
\label{fig:rls}
\end{figure}

\section{Conclusion, Perspectives}

As a guideline to our study, the inertial dynamics with Hessian driven damping give rise to a new class of first-order algorithms for convex optimization. While retaining the fast convergence of the function values reminiscent of the Nesterov accelerated algorithm, they benefit from  additional favorable properties among which the most important are:
\begin{enumerate}[label=$\bullet$]
\item fast convergence of  gradients towards zero;
\item global convergence of the iterates to optimal solutions;
\item extension to the non-smooth setting;
\item acceleration via time scaling factors.
\end{enumerate}
This article contains the core of our study with a particular focus on the gradient and proximal methods. The results thus obtained pave the way to new research avenues. For instance:
\begin{enumerate}[label=$\bullet$]
\item as initiated in Section~\ref{sec:numerics}, apply these results to structured composite optimization problems beyond \eqref{eq:minP} and develop corresponding splitting algorithms; 
 \item with the additional gradient estimates, we can expect the restart method to work better with the presence of the Hessian damping term;
\item deepen the link between our study and the Newton and Levenberg-Marquardt dynamics and algorithms (e.g., \cite{ASv}), and with the Ravine method~\cite{GZ}.
\item the inertial dynamic with Hessian driven damping goes well with tame analysis and Kurdyka-Lojasiewicz property \cite{AABR}, suggesting that the corresponding algorithms be developed in a non-convex (or even non-smooth) setting.
\end{enumerate}

\appendix
\section{Auxiliary results}

\subsection{Extended descent lemma} \label{SS:extended descent lemma}
\begin{lemma}\label{ext_descent_lemma}
Let  $f: \cH \to \R$ be  a  convex function whose gradient is $L$-Lipschitz continuous. Let $s \in ]0,1/L]$. Then for all $(x,y) \in \cH^2$, we have
\begin{equation}\label{eq:extdesclem}
f(y - s \nabla f (y)) \leq f (x) + \left\langle  \nabla f (y), y-x \right\rangle -\frac{s}{2} \|  \nabla f (y) \|^2 -\frac{s}{2} \| \nabla f (x)- \nabla f (y) \|^2 .
\end{equation}
\end{lemma}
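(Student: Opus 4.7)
The plan is to derive this reinforced descent inequality by combining the classical descent lemma applied twice: once to $f$ itself at the point $y$, and once to a cleverly tilted version of $f$ whose minimizer is $y$.

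First, I would apply the standard descent lemma (which follows from convexity and $L$-Lipschitz continuity of $\nabla f$) at the point $y$ with step $s$: since $s \leq 1/L$, one obtains
\[
f(y - s\nabla f(y)) \leq f(y) - s\norm{\nabla f(y)}^2 + \tfrac{Ls^2}{2}\norm{\nabla f(y)}^2 \leq f(y) - \tfrac{s}{2}\norm{\nabla f(y)}^2.
\]
This already produces the $-\tfrac{s}{2}\norm{\nabla f(y)}^2$ term on the right-hand side of \eqref{eq:extdesclem}. It remains to estimate $f(y)$ in terms of $f(x)$, $\dotp{\nabla f(y)}{y-x}$, and the squared gradient-difference term.

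To capture the extra $-\tfrac{s}{2}\norm{\nabla f(x) - \nabla f(y)}^2$, the key trick is to introduce the tilted function $\varphi(z) \eqdef f(z) - \dotp{\nabla f(y)}{z}$. It is convex, its gradient $\nabla \varphi(z) = \nabla f(z) - \nabla f(y)$ is still $L$-Lipschitz, and crucially $\nabla \varphi(y) = 0$, so $y$ is a global minimizer of $\varphi$. Applying the same step of the classical descent lemma to $\varphi$ at $x$ with stepsize $s \leq 1/L$ yields
\[
\varphi(x - s\nabla\varphi(x)) \leq \varphi(x) - \tfrac{s}{2}\norm{\nabla\varphi(x)}^2.
\]
Since $y$ minimizes $\varphi$ globally, the left-hand side is bounded below by $\varphi(y)$, so
\[
f(y) - \dotp{\nabla f(y)}{y} \leq f(x) - \dotp{\nabla f(y)}{x} - \tfrac{s}{2}\norm{\nabla f(x) - \nabla f(y)}^2,
\]
which rearranges to $f(y) \leq f(x) + \dotp{\nabla f(y)}{y-x} - \tfrac{s}{2}\norm{\nabla f(x)-\nabla f(y)}^2$.

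Finally, chaining this with the inequality from the first step gives exactly \eqref{eq:extdesclem}. There is no real obstacle here, as both ingredients are immediate consequences of the classical $L$-smooth descent lemma; the only ``idea'' is the tilt $\varphi$, which ensures that applying descent-and-minimality at $x$ produces precisely the cross term $\norm{\nabla f(x) - \nabla f(y)}^2$ needed to sharpen the bound.
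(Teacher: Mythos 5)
Your proof is correct. The first step coincides exactly with the paper's: both apply the standard descent lemma at $y$ with step $s \leq 1/L$ to obtain $f(y - s\nabla f(y)) \leq f(y) - \frac{s}{2}\norm{\nabla f(y)}^2$. Where you diverge is in establishing the intermediate inequality $f(y) \leq f(x) + \dotp{\nabla f(y)}{y-x} - \frac{s}{2}\norm{\nabla f(x)-\nabla f(y)}^2$. The paper works in the dual: it invokes the Fenchel identity $f(y) = \dotp{\nabla f(y)}{y} - f^*(\nabla f(y))$ together with the $1/L$-strong convexity of $f^*$ (the conjugate characterization of $L$-Lipschitz gradients) and the relation $(\nabla f)^{-1} = \partial f^*$, obtaining the sharp constant $\frac{1}{2L}$ before weakening it to $\frac{s}{2}$. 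You instead run the classical primal ``tilting'' argument: minimize the shifted function $\varphi(z) = f(z) - \dotp{\nabla f(y)}{z}$, note that $y$ is its global minimizer, and apply the descent lemma to $\varphi$ at $x$; this is entirely elementary, requires no conjugacy machinery, and delivers the needed coefficient $\frac{s}{2}$ directly since $\frac{s}{2}(2-Ls) \geq \frac{s}{2}$. Both routes are standard derivations of the same cocoercivity-type bound; yours is self-contained and arguably more accessible, while the paper's duality argument makes the role of the $1/L$-strong convexity of $f^*$ explicit and yields the sharp constant as a byproduct. No gap in either half of your argument.
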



\begin{proof}
Denote $y^+=y - s \nabla f (y)$. By the standard descent lemma applied to $y^+$ and $y$, and since $sL \leq 1$ we have
\begin{equation}\label{eq:descfm2}
f(y^+) \leq f(y) - \frac{s}{2}\pa{2-Ls} \| \nabla f (y) \|^2 \leq f(y) - \frac{s}{2} \|  \nabla f (y) \|^2.
\end{equation}
We now argue by duality between strong convexity and Lipschitz continuity of the gradient of a convex function. Indeed, using Fenchel identity, we have
\[
f(y) = \dotp{\nabla f(y)}{y} - f^*(\nabla f(y)) .
\]
$L$-Lipschitz continuity of the gradient of $f$ is equivalent to $1/L$-strong convexity of its conjugate $f^*$. This together with the fact that $(\nabla f)^{-1}=\partial f^*$ gives for all $(x,y) \in \cH^2$,
\[
f^*(\nabla f(y)) \geq  f^*(\nabla f(x)) + \dotp{x}{\nabla f(y)-\nabla f(x)} + \frac{1}{2L}\norm{\nabla f(x)-\nabla f(y)}^2 .
\]
Inserting this inequality into the Fenchel identity above yields
\begin{align*}
f(y) 
&\leq - f^*(\nabla f(x)) + \dotp{\nabla f(y)}{y} - \dotp{x}{\nabla f(y)-\nabla f(x)} - \frac{1}{2L}\norm{\nabla f(x)-\nabla f(y)}^2 \\
&= - f^*(\nabla f(x)) + \dotp{x}{\nabla f(x)} + \dotp{\nabla f(y)}{y-x} - \frac{1}{2L}\norm{\nabla f(x)-\nabla f(y)}^2 \\
&= f(x) + \dotp{\nabla f(y)}{y-x} - \frac{1}{2L}\norm{\nabla f(x)-\nabla f(y)}^2  \\
&\leq f(x) + \dotp{\nabla f(y)}{y-x} - \frac{s}{2}\norm{\nabla f(x)-\nabla f(y)}^2 .
\end{align*}
Inserting the last bound into \eqref{eq:descfm2} completes the proof.
\end{proof}

\subsection{Proof of \eqref{eq:proxFBM}} \label{SS:proofproxFBM}
\begin{proof}We have
\begin{align*}
\prox_{f}^{M}(x) 
&= \argmin_{z \in \R^n} \frac{1}{2}\norm{z - x}_M^2 + f(z) \\
&= \argmin_{z \in \R^n} \frac{1}{2s}\norm{z - x}^2 - \frac{1}{2}\norm{A(z - x)}^2 + \frac{1}{2}\norm{y-A z}^2 + g(z) .
\end{align*}
By the Pythagoras relation, we then get
\begin{align*}
\prox_{f}^M(x) &= \argmin_{z \in \R^n} \frac{1}{2s}\norm{z - x}^2 + \frac{1}{2}\norm{y-A x}^2 - \dotp{A(x-z)}{A x - y} + g(z) \\
				      &= \argmin_{z \in \R^n} \frac{1}{2s}\norm{z - x}^2 - \dotp{z - x}{A^*\pa{y - A x}} + g(z) \\
				      &= \argmin_{z \in \R^n} \frac{1}{2s}\norm{z - \pa{x - s A^*\pa{A x - y}}}^2 + g(z) \\
				      &= \prox_{s g}\pa{x - s A^*\pa{A x - y}} .
\end{align*}
\end{proof}

\subsection{Closed-form solutions of \DINAVD{\alpha,\beta,b} for quadratic functions} \label{SS:solquad}
We here provide the closed form solutions to \DINAVD{\alpha,\beta,b} for the quadratic objective $f: \R^n \to \dotp{Ax}{x}$, where $A$ is a symmetric positive definite matrix. The case of a semidefinite positive matrix $A$ can be treated similarly by restricting the analysis to $\ker(A)^\top$. Projecting \DINAVD{\alpha,\beta,b} on the eigenspace of $A$, one has to solve $n$ independent one-dimensional ODEs of the form
\begin{equation*}
\ddot{x}_i(t) + \pa{\frac{\alpha}{t}+\beta(t)\lambda_i}\dot{x}_i(t) + \lambda_i b(t) x_i(t) = 0, \qquad i=1,\ldots,n .
\end{equation*}
where $\lambda_i > 0$ is an eigenvalue of $A$. In the following, we drop the subscript $i$.

\paragraph{\textbf{Case~$\boldsymbol{\beta(t) \equiv \beta, b(t)=b+\gamma/t, \beta \geq 0, b > 0, \gamma \geq 0}$:}}
The ODE reads
\begin{equation}\label{eq:dinavdquad1D}
\ddot{x}(t) + \pa{\frac{\alpha}{t}+\beta\lambda}\dot{x}(t) + \lambda \pa{b+\frac{\gamma}{t}} x(t) = 0 .
\end{equation}
\begin{enumerate}[label=$\bullet$]
\item If $\beta^2\lambda^2 \neq 4b\lambda$: set
\[
\xi = \sqrt{\beta^2\lambda^2 - 4b\lambda}, \, \kappa = \lambda\frac{\gamma-\alpha\beta/2}{\xi}, \, \sigma = (\alpha-1)/2 .
\]
Using the relationship between the Whitaker functions and the Kummer's confluent hypergeometric functions $M$ and $U$, see~\cite{Bateman}, the solution to \eqref{eq:dinavdquad1D} can be shown to take the form
\[
x(t) = \xi^{\alpha/2} e^{-(\beta\lambda+\xi)t/2}\brac{c_1 M(\alpha/2-\kappa,\alpha,\xi t) + c_2 U(\alpha/2-\kappa,\alpha,\xi t)} ,
\]
where $c_1$ and $c_2$ are constants given by the initial conditions.

\item If $\beta^2\lambda^2 = 4b\lambda$: set $\zeta=2\sqrt{\lambda\pa{\gamma-\alpha\beta/2}}$. The solution to \eqref{eq:dinavdquad1D} takes the form
\[
x(t) = t^{-\pa{\alpha-1}/2}e^{-\beta\lambda t/2}\brac{c_1 J_{(\alpha-1)/2}(\zeta \sqrt{t}) + c_2 Y_{(\alpha-1)/2}(\zeta \sqrt{t})} ,
\]
where $J_\nu$ and $Y_\nu$ are the Bessel functions of the first and second kind.
\end{enumerate}
When $\beta > 0$, one can clearly see the exponential decrease forced by the Hessian. From the asymptotic expansions of $M$, $U$, $J_{\nu}$ and $Y_{\nu}$ for large $t$, straightforward computations provide the behaviour of $|x(t)|$ for large $t$ as follows:
\begin{enumerate}[label=$\bullet$]
\item If $\beta^2\lambda^2 > 4b\lambda$, we have
\[
|x(t)| = \cO\pa{t^{-\frac{\alpha}{2}+|\kappa|} e^{-\frac{\beta\lambda-\xi}{2}t}} = \cO\pa{e^{-\frac{2b}{\beta}t - \pa{\frac{\alpha}{2}-|\kappa|}\log(t)}} .
\]

\item If $\beta^2\lambda^2 < 4b\lambda$, whence $\xi \in i \R^+_*$ and $\kappa \in i \R$, we have
\[
|x(t)| = \cO\pa{t^{-\frac{\alpha}{2}} e^{-\frac{\beta\lambda}{2}t}} .
\]
\item If $\beta^2\lambda^2 = 4b\lambda$, we have
\[
|x(t)| = \cO\pa{t^{-\frac{2\alpha-1}{4}} e^{-\frac{\beta\lambda}{2}t}} .
\]
\end{enumerate}

\paragraph{\textbf{Case~$\boldsymbol{\beta(t) = t^{\beta}, b(t)=ct^{\beta-1}, \beta \geq 0, c > 0}$:}}
The ODE reads now
\begin{equation*}
\ddot{x}(t) + \pa{\frac{\alpha}{t}+t^\beta\lambda}\dot{x}(t) + c\lambda t^{\beta-1} x(t) = 0 .
\end{equation*}
Let us make the change of variable $t \eqdef \tau^{\frac{1}{\beta+1}}$. Let $y(\tau) \eqdef x\pa{\tau^{\frac{1}{\beta+1}}}$. By the standard derivation chain rule, it is straightforward to show that $y$ obeys the ODE
\begin{equation*}
\ddot{y}(\tau) + \pa{\frac{\alpha+\beta}{(1+\beta)\tau}+\frac{\lambda}{1+\beta}}\dot{y}(\tau) + \frac{c\lambda}{(1+\beta)^2\tau} y(\tau) = 0 .
\end{equation*}
It is clear that this is a special case of~\eqref{eq:dinavdquad1D}. Since $\beta$ and $\lambda > 0$, set
\[
\xi = \frac{\lambda}{1+\beta}, \, \kappa = -\frac{\alpha+\beta-c}{1+\beta}, \, \sigma = \frac{\alpha+\beta}{2(1+\beta)} - \frac{1}{2} .
\]
It follows from the first case above that
\[
x(t) = \xi^{\sigma+1/2} e^{-\frac{\lambda\tau}{1+\beta}}\brac{c_1 M\pa{\sigma-\kappa+1/2,\frac{\alpha+\beta}{1+\beta},\xi \tau} + c_2 U\pa{\sigma-\kappa+1/2,\frac{\alpha+\beta}{1+\beta},\xi \tau}} .
\]
Asymptotic estimates can also be derived similarly to above. We omit the details for the sake of brevity.

\end{document}